\title[A spectral sequence in odd Khovanov homology]{An integral lift, starting in odd Khovanov homology, of Szab\'o's spectral sequence}
\author{Simon Beier}
\address{Fakult\"at f\"ur Mathematik, Universit\"at Regensburg,
  93040 Regensburg, Germany}
\email{simon.beier@mathematik.uni-regensburg.de}
\newcommand{\eps}{\varepsilon}
\newcommand{\V}{\textup{V}}
\newcommand{\defeq}{\mathrel{\vcentcolon=}}
\newcommand{\z}{\mathbb{Z}}
\newcommand{\n}{\mathbb{N}}
\newcommand{\s}{\mathbb{S}}
\newcommand{\C}{\mathcal{C}}
\newcommand{\D}{\mathcal{D}}
\newcommand{\te}{\textup{E}}
\newcommand{\hh}{\widehat{\textup{H}}}
\newcommand{\td}{\textup{d}}
\newcommand{\f}{\mathfrak{F}}
\newcommand{\tr}{\textup{R}}
\newcommand{\tsp}{\textup{sp}}
\newcommand{\ta}{\textup{A}}
\newcommand{\tb}{\textup{B}}
\newcommand{\tc}{\textup{C}}
\newcommand{\tde}{\textup{D}}
\newcommand{\tf}{\textup{F}}
\newcommand{\tg}{\textup{G}}
\newcommand{\ttt}{\textup{T}}
\newcommand{\tit}{\tau\in\textup{T}}
\newcommand{\gr}{\textup{gr}}
\newcommand{\hc}{\widehat{\textup{C}}}
\newcommand{\tth}{\textup{H}}
\newtheorem{thm}{Theorem}[section]
\newtheorem{prop}[thm]{Proposition}
\newtheorem{lem}[thm]{Lemma}
\newtheorem{cor}[thm]{Corollary}
\theoremstyle{remark}
\theoremstyle{definition}
\newtheorem{defn}[thm]{Definition}
\newtheorem{quest}[thm]{Question}
\newtheorem*{ack}{Acknowledgment}
\begin{document}

\begin{abstract}
Ozsv\'ath, Rasmussen and Szab\'o constructed odd Khovanov homology \cite{ors}. It is a link invariant which has the same reduction modulo~2 as (even) Khovanov homology. Szab\'o introduced a spectral sequence with mod 2 coefficients from mod 2 Khovanov homology to another link homology \cite{szabo}. He got his spectral sequence from a chain complex with a filtration. We give an integral lift of Szab\'o's complex that provides a spectral sequence from odd Khovanov homology to a link homology, from which one can get Szab\'o's link homology with the Universal Coefficient Theorem. Szab\'o has constructed such a lift independently (unpublished).
\end{abstract}

\maketitle

\section{Introduction}
Khovanov homology $\textup{Kh}(\cdot)$ is a link invariant which arises as the homology of a bigraded chain complex with $\z$-coefficients assigned to a link diagram \cite{kh,barnat}. The graded Euler characteristic of Khovanov homology is the Jones polynomial \cite{barnat}.

Ozsv\'ath, Rasmussen and Szab\'o \cite{ors} constructed a modified version of Khovanov homology, odd Khovanov homology $\textup{Kh}'(\cdot)$. Let us briefly recall their construction. To assign a chain complex to a link diagram $\D$ we look at the hypercube of resolutions of $\D$, like for even Khovanov homology. At each vertex in the hypercube we get a collection of embedded, planar circles and we assign to the vertex the exterior algebra of the free abelian group generated by the circles. To define a differential two kinds of extra data are used.
\begin{enumerate}
\item[1.]
At each crossing of $\D$ an orientation of the arc that connects the two segments of the 0-resolution, see Figure \ref{arc}.
\begin{figure}[htbp]
\centering
\includegraphics{gra.1}
\caption{}
\label{arc}
\end{figure}
\item[2.]
An edge assignment $\eps$. That is a map that assigns to each edge in the hypercube an element of $\{-1,1\}$ and satisfies some additional properties.
\end{enumerate}
According to which properties $\eps$ satisfies, we call $\eps$ of type~X or of type~Y. For each edge $a$ in the hypercube we get a homomorphism between the exterior algebras at the boundary points of $a$: At $a$ we get one of the two pictures in Figure \ref{1dim} and each of these pictures gives us a homomorphism which we multiply with $\eps(a)$. The homomorphisms at the edges give us the differential for the odd Khovanov complex and the properties that $\eps$ has to  fulfill guarantee that we have $d\circ d=0$.

If we choose for a given link diagram two different orientations of the arcs of the 0-resolution and two different edge assignments of the same type (X or Y), then the two resulting chain complexes are isomorphic. If the two edge assignments are of opposite types, then Ozsv\'ath, Rasmussen and Szab\'o claim that the two resulting chain complexes are also isomorphic \cite[Lemma 2.4]{ors}. But it seems that they do not give an isomorphism. From the Reidemeister invariance of odd Khovanov homology we conclude the following:
\begin{prop}\label{xycor}
Let $L$ be a link. Then there is an isomorphism $\textup{Kh}_X'(L)\cong\textup{Kh}_Y'(L)$ between the odd Khovanov homology groups of type X and type Y.
\end{prop}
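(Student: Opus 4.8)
The plan is to deduce the isomorphism from the Reidemeister invariance of odd Khovanov homology. The first observation is that the argument of Ozsv\'ath, Rasmussen and Szab\'o showing that odd Khovanov homology of type X is a link invariant goes through for type Y as well: it uses nothing about the edge assignment beyond $d\circ d=0$ and the behaviour of the cube of resolutions under the three Reidemeister moves, and neither of these feels the type. Consequently $\textup{Kh}_X'(L)$ and $\textup{Kh}_Y'(L)$ are independent of the diagram, the arc orientations and the edge assignment, so it is enough to exhibit, for \emph{one} diagram $\D$ of $L$, a chain homotopy equivalence between the type X complex $C_X'(\D)$ and the type Y complex $C_Y'(\D)$. (Reducing both modulo $2$ gives the mod $2$ even Khovanov complex in either case, so the content of the statement is purely integral and no mod $2$ argument can substitute for this step.)

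I would construct such an equivalence by induction on the number $n$ of crossings of $\D$. If $n\le 1$ the cube of resolutions has no two-dimensional faces, so the conditions defining an edge assignment of type X or of type Y are vacuous, every edge assignment is of both types at once, and $C_X'(\D)$ and $C_Y'(\D)$ may be taken to be the same complex. For the inductive step, fix a crossing $c$ of $\D$ and let $\D_0,\D_1$ be its two resolutions, each with $n-1$ crossings. Splitting the cube of $\D$ along the $c$-direction exhibits $C_X'(\D)$ as the mapping cone of the chain map between $C_X'(\D_0)$ and $C_X'(\D_1)$ assembled from the saddle maps on the $c$-edges (each weighted by the value of the edge assignment there), and likewise for $C_Y'(\D)$. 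Since the restriction of a type X (resp. type Y) edge assignment of $\D$ to either half of the cube is again of type X (resp. type Y) for $\D_0$ and $\D_1$, the inductive hypothesis supplies homotopy equivalences $C_X'(\D_i)\simeq C_Y'(\D_i)$ for $i=0,1$; if these can be chosen to intertwine the two saddle chain maps up to homotopy, the mapping cones are homotopy equivalent, so $C_X'(\D)\simeq C_Y'(\D)$, and passing to homology yields $\textup{Kh}_X'(L)\cong\textup{Kh}_Y'(L)$.

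The main obstacle is precisely this last compatibility: an abstract equivalence between the complexes of $\D_0$ and $\D_1$ need not respect the saddle maps. I would get around it by strengthening the inductive claim to say that the ``type X'' and ``type Y'' cube functors on partially resolved diagrams are \emph{naturally} homotopy equivalent, i.e.\ that the equivalences can be chosen so that every square built from the saddle, birth and death maps between such diagrams commutes up to homotopy. It then remains to check that this strengthened naturality is inherited when one forms mapping cones --- so that the equivalence produced for $\D$ again satisfies it and the induction closes --- and that the base case holds in the strengthened form. Establishing the stability of the naturality under passage to cones is where the real work lies; the remaining ingredients are routine bookkeeping about the cube of resolutions.
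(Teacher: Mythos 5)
Your approach is genuinely different from the paper's, and unfortunately it leaves the crux unresolved. The paper does not attempt a direct chain-level comparison between the type X and type Y complexes of a fixed diagram at all. Instead it uses a symmetry: given an edge assignment $\eps$ of type X for $\C$, the \emph{same} $\eps$ is an edge assignment of type Y for $\textup{r}(\textup{m}(\C))$, because passing to the reversed mirror configuration interchanges the 2-face types X and Y while fixing A and K. The canonical identification of chain groups is then a literal isomorphism $\textup{C}(\C,\eps)\cong\textup{C}(\textup{r}(\textup{m}(\C)),\eps)$ of complexes, with $\eps$ now of type Y. Finally, $\overline{\textup{m}(\C)}$ is the 0-resolution of the diagram $\D_2$ obtained from $\D_1$ by reversing the orientation of $\s^2$ and swapping over/under at every crossing; since $\D_1$ and $\D_2$ present the same link, type Y Reidemeister invariance identifies the type Y homology of $(\D_2,\textup{r}(\textup{m}(\C)))$ with that of $(\D_1,\C)$, and the proposition follows by composing. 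No mapping cones, no induction on crossings.

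Your inductive scheme, by contrast, hinges on a claim you state but do not establish: that the homotopy equivalences $C_X'(\D_i)\simeq C_Y'(\D_i)$ can be chosen to intertwine the saddle maps up to compatible homotopies, and that this naturality survives passage to mapping cones. You correctly flag this as ``where the real work lies,'' but that work is the entire content of the statement; the setup around it (the base case $n\le 1$, the cone decomposition, the restriction of edge assignments) is the easy part. The paper itself hints at the difficulty of producing such a direct comparison: it remarks that Ozsv\'ath, Rasmussen and Szab\'o assert the two complexes are isomorphic but apparently give no isomorphism, which is precisely why the author resorts to the mirror trick. So what you have is a plausible programme with a genuine gap at its centre, rather than a proof. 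If you want to complete it, you would need to actually construct, for each partially resolved tangle, natural homotopy equivalences respecting all the cobordism maps and verify closure under cones; alternatively, and much more cheaply, notice the $\C\mapsto\textup{r}(\textup{m}(\C))$ symmetry and ride the already-proved Reidemeister invariance, as the paper does.
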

The mod 2 reductions of even and odd Khovanov homology are the same and so the graded Euler characteristic of odd Khovanov homology is the Jones polynomial as in the even case.

We now prepare our main result (Theorem \ref{mainth}): There exists an integral lift of a mod 2 chain complex constructed by Szab\'o \cite{szabo} which provides a spectral sequence starting in odd Khovanov homology. So let us summarize Szab\'o's result: He constructed a spectral sequence $\te(\cdot;\z_2)$ with mod 2 coefficients which is a link invariant. In this article all spectral sequences start with the $\te^2$-term. The $\te^2$-term of Szab\'o's spectral sequence is isomorphic to mod 2 Khovanov homology $\te^2(\cdot;\z_2)\cong\textup{Kh}(\cdot;\z_2)$. The spectral sequence $\te(\cdot;\z_2)$ converges to a link homology $\hh(\cdot;\z_2)$. To get Szab\'o's spectral sequence we define a differential $\td=\sum_{n\in\n}\td_n$ on the mod 2 Khovanov chain groups. The map~$\td_n$ shifts the $(\textup{h},\delta)$-grading by $(n,-2)$ and $\td_1$ equals the mod 2 Khovanov differential. From the h-grading we get a filtration on the complex and so we get the described spectral sequence, which converges to the homology of the complex. Like for odd Khovanov homology we use an orientation of the arcs of the 0-resolution to define the differential. The isomorphism type of the complex is independent of this choice of orientation.

We construct $\td_n$ as follows: At each $n$-dimensional face $a$ in the hypercube of resolutions we get a collection of embedded, planar circles connected by $n$ oriented arcs. The type of this picture determines a homomorphism (which could be trivial) between the groups at the two vertices of $a$ where either all crossings belonging to $a$ are 0-smoothed or all of them are 1-smoothed. Then $\td_n$ is the sum over all these homomorphisms at $n$-dimensional faces.

Theorem \ref{mainth} is our main result. It was also proved by Szab\'o independently and earlier (unpublished).
\begin{thm}\label{mainth}
There exists an integral lift $\hc$ of Szab\'o's complex, so that $\hc$ with only the $\td_1$ differential is isomorphic to the odd Khovanov complex of type Y. The resulting spectral sequence and the homology groups of $\hc$ are link invariants.
\end{thm}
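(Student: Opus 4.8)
The plan is to build $\hc$ on the same underlying bigraded abelian groups as the odd Khovanov complex of type~Y: at each vertex of the cube of resolutions of a diagram $\D$ we place the exterior algebra of the free abelian group on the circles of that resolution, with the $(\textup{h},\delta)$-grading inherited from the odd Khovanov construction, so that it agrees with the grading of Szab\'o's complex. For an $n$-dimensional face $a$ of the cube, joining the vertex where the $n$ crossings of $a$ are all $0$-resolved to the vertex where they are all $1$-resolved, Szab\'o's picture on $a$ is a collection of circles together with $n$ oriented arcs; its type determines a homomorphism of the $\z_2$-vector spaces at the two vertices. The first step is to lift each of these type-homomorphisms to a homomorphism $\varphi_a$ of exterior algebras over~$\z$: merges are forced by the inclusion of circle sets, splits are turned into genuine maps via the arc orientations exactly as in \cite{ors}, and for the remaining Szab\'o configuration types $\ta,\tb,\tc,\tde,\dots$ one writes down the corresponding combination of wedging and contracting with the relevant circle generators. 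One then fixes a ``face assignment'' $\eps$ --- a sign attached to every face of the cube, in the spirit of the edge assignments of \cite{ors} --- and sets $\td_n=\sum_a\eps(a)\,\varphi_a$ and $\hc=(\text{these groups},\ \td=\textstyle\sum_n\td_n)$. By construction the mod~$2$ reduction of $\hc$ is Szab\'o's complex.

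The core of the proof is to choose $\eps$ so that $\td\circ\td=0$. Since each $\td_n$ raises the $\textup{h}$-grading by~$n$, one has $\td\circ\td=\sum_{m\ge2}\bigl(\sum_{i+j=m,\ i,j\ge1}\td_i\td_j\bigr)$, and the $u\to v$ component of $\td_i\td_j$ is the sum over intermediate vertices $w$ with $u\le w\le v$ of the corresponding composites; hence $\td\circ\td=0$ is equivalent to one relation on every face of the cube of each dimension $m\ge2$, one for each splitting $m=i+j$. For $m=2$ and $i=j=1$ this is exactly the condition that $(\hc,\td_1)$ be a chain complex, which holds precisely when the restriction of $\eps$ to edges is an odd-Khovanov edge assignment; one takes it to be of \emph{type~Y}, which is the convention under which the higher-dimensional relations turn out to be solvable. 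For $m\ge3$ one runs through Szab\'o's list of configuration types occurring on $m$-dimensional faces and checks, type by type, that the $\eps$-weighted sum of composites of the exterior-algebra maps vanishes, reconciling the reordering signs intrinsic to $\varphi_a$ with the chosen $\eps$. As in \cite[Lemma 2.4]{ors}, I expect that such an $\eps$ exists, that Szab\'o's mod~$2$ relations essentially dictate the sign of each composite, and that any two valid face assignments give isomorphic complexes, so that $\hc$ is well defined up to isomorphism; and then $(\hc,\td_1)$ is, by construction, the type-Y odd Khovanov complex.

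The $\textup{h}$-grading gives a bounded filtration of $\hc$, hence a spectral sequence which in the convention of this paper starts at $\te^2=\textup{H}(\hc,\td_1)=\textup{Kh}_Y'(\D)$ and, by boundedness, converges to $\textup{H}(\hc)$. For the invariance statements I would argue diagram-locally, mirroring Szab\'o's mod~$2$ proof: for each Reidemeister move one exhibits a filtered chain homotopy equivalence between the complexes $\hc$ of the two diagrams, obtained by lifting Szab\'o's mod~$2$ homotopies (the Gaussian-elimination and delooping steps) to~$\z$ and checking that all the maps and homotopies respect the filtration and are compatible with the chosen face assignments --- the sign bookkeeping being the same as for $\td\circ\td=0$. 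A filtered chain homotopy equivalence induces an isomorphism of the associated spectral sequences from the $\te^1$-term on, in particular on every page $\te^r$ with $r\ge2$, and an isomorphism on $\textup{H}(\hc)$; together with Proposition~\ref{xycor} this also identifies the $\te^2$-page with $\textup{Kh}_X'(\D)$. The independence of $\hc$ (up to filtered isomorphism) of the chosen orientations of the arcs of the $0$-resolution is verified the same way, by an explicit filtered isomorphism as in the odd Khovanov and in Szab\'o's settings.

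The main obstacle is the construction and analysis of the face assignment~$\eps$ realizing $\td\circ\td=0$ over~$\z$: one must prove that Szab\'o's combinatorial configuration relations, valid mod~$2$, admit a consistent lift of signs; identify that it is the type-Y rather than the type-X convention that is compatible with Szab\'o's configurations; and carry out the case analysis on the two- and three-dimensional faces, checking along the way that no further obstruction arises on higher-dimensional faces. A secondary difficulty is arranging the Reidemeister homotopies so as to be simultaneously filtered and compatible with~$\eps$.
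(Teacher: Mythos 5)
Your outline matches the paper's in broad strokes---lift each of Szab\'o's configuration types to an integral map, attach signs, show $\td\circ\td=0$, cite/adapt the Reidemeister homotopies---but the sign mechanism you propose is genuinely different from what the paper does, and harder. You posit a ``face assignment'' attaching a free sign $\eps(a)$ to every face $a$ of every dimension, and then propose to solve for these signs so that the resulting $\td$ squares to zero. The paper instead uses \emph{only} an edge assignment $\eps:\f(n,1)\to\{-1,1\}$ of type~Y (exactly as in odd Khovanov homology), and the contribution from an $n$-face $a$ carries a sign assembled from three ingredients: the product $\eps(\theta)$ over a path of edges $\theta$ through $a$, a combinatorially defined ``split sign'' $\textup{sp}(\C,\theta)$, and a global factor $(-1)^{|a^0|+(k+1)\textup{sp}(\C,a^0)}$; a non-trivial step is then to show this is independent of the chosen path $\theta$, which is done by a transposition-by-transposition check on the squares of the hypercube. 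This removes all the degrees of freedom you would otherwise have to constrain, and is what makes $\td\circ\td=0$ tractable. Relatedly, the paper's proof of $\td\circ\td=0$ does not proceed by a direct face-by-face case check as you suggest; the key tool is a family of edge-homotopy maps $\tth_i$ and a ``dependence on orientation'' theorem asserting that changing the orientation of one arc changes $\td$ by the commutator $\td\circ\tth_i-\tth_i\circ\td$ (plus a change of edge assignment), which lets one reduce to configurations with a convenient orientation before verifying the vanishing. Your plan also leaves unspecified \emph{which} basis element a face map sends non-trivially and with what sign---in the paper this is precisely where the path-dependent element $x_{\C,\tau,\theta}$ enters and where the care is needed---so as written it is a plausible strategy rather than a proof. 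Finally, the paper does not re-derive the Reidemeister homotopies over $\z$: it cites the existing proofs of ORS and Szab\'o, from which the filtered invariance follows, whereas you propose to rebuild them; this would work but is more than is needed.
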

\begin{cor}\label{maincor}
For a link $L$ we get a spectral sequence $\te(L)$. The $\te^2$-term is isomorphic to odd Khovanov homology $\te^2(L)\cong\textup{Kh}'(L)$. The spectral sequence $\te(L)$ converges to a link homology $\hh(L)$, from which one can get Szab\'o's $\hh(L;\z_2)$ with the Universal Coefficient Theorem.
\end{cor}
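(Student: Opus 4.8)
The plan is to read off the spectral sequence together with its abutment directly from the filtered complex $\hc$ furnished by Theorem~\ref{mainth}, to identify the first few pages, and then to compare the mod~$2$ reduction with Szab\'o's invariant.

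First I would fix a diagram $\D$ of $L$ together with the orientation and sign data used to construct $\hc = (\hc,\td)$, $\td=\sum_{n\geq 1}\td_n$. Since $\D$ has finitely many crossings, $\hc$ is a bounded complex of finitely generated free abelian groups, and the $\textup{h}$-grading gives a \emph{bounded} decreasing filtration which is preserved by $\td$ and along which $\td_n$ raises the filtration degree by exactly $n$. Hence the differential induced on the associated graded object vanishes, so $\te^1$ is the chain groups themselves and the first possibly nonzero differential is $d^1 = \td_1$; consequently $\te^2(L)\cong\textup{H}_*(\hc,\td_1)$ as a bigraded group. By Theorem~\ref{mainth}, $(\hc,\td_1)$ is isomorphic to the odd Khovanov complex of type~Y, so $\te^2(L)\cong\textup{Kh}_Y'(L)$, and $\textup{Kh}_Y'(L)\cong\textup{Kh}'(L)$ by Proposition~\ref{xycor}.

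Because the filtration is bounded and $\hc$ is a bounded complex, the spectral sequence converges to $\textup{H}_*(\hc,\td)$; set $\hh(L)\defeq\textup{H}_*(\hc,\td)$. The invariance clause of Theorem~\ref{mainth} is exactly the statement that $\te(L)$ and $\hh(L)$ do not depend, up to isomorphism, on $\D$ or on the auxiliary choices. To relate this to Szab\'o's theory, note that $\hc$ being an integral lift of Szab\'o's complex $\C$ means that reduction mod~$2$ gives an isomorphism $\hc\otimes_\z\z_2\cong\C$ of complexes respecting the full differential, so $\textup{H}_*(\hc\otimes_\z\z_2)=\hh(L;\z_2)$; since $\hc$ is termwise free, the Universal Coefficient Theorem supplies the short exact sequence
\[
0\longrightarrow\textup{H}_*(\hc)\otimes_\z\z_2\longrightarrow\textup{H}_*(\hc\otimes_\z\z_2)\longrightarrow\textup{Tor}_1^{\z}\bigl(\textup{H}_{*-1}(\hc),\,\z_2\bigr)\longrightarrow 0 ,
\]
which determines $\hh(L;\z_2)$ in terms of $\hh(L)$.

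I expect this corollary to be essentially a formal consequence of Theorem~\ref{mainth}; the points meriting care are the reindexing (checking that the associated graded differential of the $\textup{h}$-filtration vanishes, so that odd Khovanov homology sits on the $\te^2$-page, in line with the conventions fixed in the introduction) and the invocation of Proposition~\ref{xycor} to replace type~Y by the canonical $\textup{Kh}'(L)$. A secondary issue is to pin down that ``integral lift'' is meant strictly enough that $\hc\otimes_\z\z_2$ genuinely recovers Szab\'o's filtered complex, hence his $\hh(\cdot;\z_2)$; should it only be known up to quasi-isomorphism, the homology-level conclusion would still stand.
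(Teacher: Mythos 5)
Your proposal is correct and follows the same route the paper takes: exploit the bounded $\textup{h}$-filtration on $\hc$, use Corollary~\ref{cord1} (that $(\tc(\C),\td_1)$ is the type-Y odd Khovanov complex) to identify $\te^2$, invoke the invariance clause of Theorem~\ref{mainth}, and apply the Universal Coefficient Theorem to the termwise free lift of Szab\'o's complex. The only minor overstep is the worry about ``up to quasi-isomorphism'': the paper asserts a genuine isomorphism $\hc(\C,\eps)\otimes_\z\z_2\cong$ Szab\'o's complex, so that concern does not arise.
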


So we have constructed link invariants $\te(\cdot)$ and $\hh(\cdot)$ such that $\te(\cdot)$ contains potentially more information than odd Khovanov homology $\textup{Kh}'(\cdot)$ on the one hand and $\hh(\cdot)$ contains potentially more information than Szab\'o's link homology $\hh(\cdot;\z_2)$ on the other hand. It would be worthwhile to have a computer program which computes $\te(\cdot)$ and $\hh(\cdot)$ to answer the following natural questions:
\begin{quest}
Which links $L_1,L_2$ with $\textup{Kh}'(L_1)\cong\textup{Kh}'(L_2)$ can be distinguished by $\te(\cdot)$ or $\hh(\cdot)$?
\end{quest}
\begin{quest}
Which links $L_1,L_2$ with $\hh(L_1;\z_2)\cong\hh(L_2;\z_2)$ or even $\te(L_1;\z_2)\cong\te(L_2;\z_2)$ can be distinguished by $\hh(\cdot)$ or $\te(\cdot)$?
\end{quest}

Another interesting aspect of our result is that we have constructed a combinatorial defined link homology $\hh(\cdot)$ that is related to odd Khovanov homology $\textup{Kh}'(\cdot)$ via the spectral sequence $\te(\cdot)$. If we have proved that $\hh(\cdot)$ or $\textup{Kh}'(\cdot)$ has a particular property it could be possible to prove by means of $\te(\cdot)$ that the other link homology $\textup{Kh}'(\cdot)$ or $\hh(\cdot)$ respectively also has this property. For example all links that can be distinguished from the unknot by $\hh(\cdot)$ can also be distinguished from the unknot by $\textup{Kh}'(\cdot)$, which can easily be proved via $\te(\cdot)$.

Szab\'o gives a second version of his complex with differential $\td '$ \cite[beginning of Section 8]{szabo}.
\begin{prop}\label{primelem}
Theorem \ref{mainth} holds also if we replace "Szab\'o's complex" by "Szab\'o's complex with differential $\td '$" and "type Y" by "type X".
\end{prop}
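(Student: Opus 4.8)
The plan is to carry out the proof of Theorem~\ref{mainth} a second time, with Szab\'o's second differential $\td'$ in place of $\td$ and a type~X edge assignment in place of a type~Y one. Recall that the proof of Theorem~\ref{mainth} builds the lift $\hc$ from an orientation of the arcs of the $0$-resolution, a type~Y edge assignment $\eps$, and, for each $n$-dimensional face of the hypercube, an explicit integral homomorphism lifting the corresponding summand of $\td_n$; with these choices $\td_1$ is the odd Khovanov differential of type~Y. I would apply the same recipe, but starting from a type~X edge assignment and from $\td'$, whose summands are, configuration by configuration, those of $\td$ with certain signs changed, as described in \cite[Section~8]{szabo}. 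This produces a bigraded group $\hc'$ together with maps $\td'_n$, $n\in\n$, where $\td'_1$ is the odd Khovanov differential of type~X, and where the mod~2 reduction of $(\hc',\sum_{n\in\n}\td'_n)$ is Szab\'o's $\td'$-complex by construction.

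The key step is to check that $(\hc',\sum_{n\in\n}\td'_n)$ is again a chain complex, that is, that the sign computation making $d\circ d=0$ in the proof of Theorem~\ref{mainth} still closes after the two substitutions. I would do this by producing a diagonal isomorphism $\Phi\colon\hc\to\hc'$ which is $\pm\mathrm{id}$ on each vertex group, the sign depending only on the resolution vertex, and which simultaneously carries $\td_1$ to $\td'_1$ and each $\td_n$ to $\td'_n$. Existence of such a $\Phi$ would follow from comparing, $2$-face by $2$-face, the discrepancy between the type~X and type~Y odd Khovanov differentials (governed by the classification of faces of the cube in \cite{ors}) with the discrepancy between $\td$ and $\td'$ (governed by Szab\'o's classification of configurations): the point is that both are controlled by the same local combinatorial data attached to the face. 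Given $\Phi$, since it preserves the h-grading it identifies the two filtered complexes, hence induces isomorphisms of the associated spectral sequences and of homology, and Theorem~\ref{mainth} then yields at once the existence of the lift $\hc'$, the identification of its $\td'_1$-part with the odd Khovanov complex of type~X, and the link invariance of the resulting spectral sequence and homology.

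The main obstacle is establishing that the two sign discrepancies actually match face by face. One cannot shortcut this by importing an isomorphism between the type~X and type~Y odd Khovanov complexes: as remarked after Proposition~\ref{xycor}, only the homologies are known to agree, and no chain-level isomorphism is on record. So the matching must be read off directly from the two combinatorial classifications. Should they fail to coincide on the nose, the remaining freedom in the choice of type~X edge assignment would have to be used to absorb the difference before $\Phi$ can be defined, and verifying that this is always possible becomes the crux. A cleaner route, if Szab\'o's passage from $\td$ to $\td'$ corresponds under mirroring a diagram to exchanging the two resolution conventions, would be to identify $\hc'$ for $L$ with the dual complex of $\hc$ for the mirror $\overline{L}$ and deduce everything from Theorem~\ref{mainth} applied to $\overline{L}$.
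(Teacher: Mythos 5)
Your main plan---rebuilding the whole construction with a type~X edge assignment and $\td'$, then exhibiting a diagonal chain isomorphism $\Phi\colon\hc\to\hc'$ that is $\pm\mathrm{id}$ on each vertex group---runs squarely into the obstacle you yourself identify: no chain-level isomorphism between the type~X and type~Y odd Khovanov complexes is on record, and the face-by-face sign matching that such a $\Phi$ would require is precisely the verification that Ozsv\'ath--Rasmussen--Szab\'o state but do not carry out in \cite[Lemma~2.4]{ors}. Attempting to absorb the discrepancy into the residual freedom of the edge assignment is not obviously possible, and one would also have to redo the entire $d\circ d=0$ argument of Theorem~\ref{thdd}. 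So as it stands this route has a genuine gap rather than a detail to fill.

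Your ``cleaner'' alternative is closer in spirit to the paper's actual proof, but the specific mechanism is off. The paper does not pass to the mirror link $\overline{L}$ and does not dualize the chain complex. Instead it works purely at the level of oriented configurations: if $\eps$ has type~X with respect to $\C$, then $\eps$ has type~Y with respect to $\textup{r}(\textup{m}(\C))$, so one simply \emph{defines} $\hc'(\C,\eps)\defeq\hc(\textup{r}(\textup{m}(\C)),\eps)$, applying the already-established type~Y construction verbatim. From Corollary~\ref{cord1} the $\td_1'$-part is isomorphic to the type~X odd Khovanov complex (since $\textup{C}(\textup{r}(\textup{m}(\C)),\eps)\cong\textup{C}(\C,\eps)$), and by comparison with \cite[Section~8]{szabo} the mod~$2$ reduction is Szab\'o's $\td'$-complex. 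Crucially, $\overline{\textup{r}(\textup{m}(\C))}=\overline{\textup{m}(\C)}$ is the $0$-resolution of a diagram for the \emph{same} link $L$ (obtained by reversing the orientation of $\s^2$ and switching over/under at every crossing, as in the proof of Corollary~\ref{xycor2}), not for $\overline{L}$. Link invariance of $\te'$ and $\hh'$ is therefore just Theorem~\ref{mainth} applied to this other diagram of $L$; no duality rule is needed. So the paper's argument is a two-line reduction to the already-proved theorem, and none of the sign bookkeeping you anticipate ever arises.
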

Let $\te '(\cdot)$ and $\hh '(\cdot)$ be the spectral sequence and the link homology we get from Proposition \ref{primelem}. From the Reidemeister invariance of the spectral sequence and the homology we conclude analogously to Proposition \ref{xycor}:
\begin{cor}\label{primecor}
Let $L$ be a link. Then there are isomorphisms $\te '(L)\cong\te(L)$ and $\hh '(L)\cong\hh(L)$.
\end{cor}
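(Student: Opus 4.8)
The plan is to argue exactly as in Proposition \ref{xycor}. The first ingredient is already available: $\te(\cdot)$ and $\hh(\cdot)$ are link invariants by Theorem \ref{mainth}, and $\te '(\cdot)$ and $\hh '(\cdot)$ are link invariants by Proposition \ref{primelem}. Because all four objects are link invariants, it suffices to compare the two constructions — the type-Y complex $\hc$ carrying Szab\'o's differential $\td$, and the type-X complex $\hc '$ carrying Szab\'o's differential $\td '$ — either on one conveniently chosen diagram or up to a finite sequence of Reidemeister moves.

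Next I would isolate what the two constructions share. For a fixed diagram $\D$ of $L$, both $\hc$ and $\hc '$ live on the same bigraded abelian group, the exterior algebras attached to the vertices of the cube of resolutions of $\D$, and they carry the same $\textup{h}$-filtration: the $\textup{h}$-grading is the height in the cube, and this depends neither on the orientations of the 0-resolution arcs, nor on the type of the edge assignment, nor on the choice between $\td$ and $\td '$. Hence the two differentials differ only through sign conventions attached to faces of the cube, and the task is to see that these conventions leave the filtered chain homotopy type unchanged. The passage from $\td$ to $\td '$ on the higher-dimensional faces is controlled by Szab\'o's own comparison of his two complexes, which I would lift to the integral level; the passage from the type-Y to the type-X edge assignment in $\td_1$ versus $\td_1'$ is precisely the opposite-type ambiguity discussed before Proposition \ref{xycor}.

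The hard part is this last comparison. As in Proposition \ref{xycor}, I would not try to write down the chain isomorphism asserted in \cite[Lemma 2.4]{ors} directly; instead I would reproduce the maneuver of that proof, using Reidemeister invariance to bridge the type-X and type-Y conventions, and checking that all the filtered chain homotopy equivalences involved respect the $\textup{h}$-filtration — a point that did not arise for the ordinary odd Khovanov complex in Proposition \ref{xycor} but is essential here. Given that, each such equivalence induces isomorphisms of the associated spectral sequences from the $\te^2$-page onward and of the homology groups; combined with the link invariance of $\te(\cdot)$ and $\hh(\cdot)$, this yields $\te '(L)\cong\te(L)$ and $\hh '(L)\cong\hh(L)$.
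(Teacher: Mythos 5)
Your core strategy -- invoke Reidemeister invariance as in the proof of Corollary~\ref{xycor2} -- is exactly what the paper does, and you are right to flag that one must check the Reidemeister-move equivalences are filtered so that they induce isomorphisms of spectral sequences, not merely of homology groups; the paper's Reidemeister-invariance theorem for $\te(\cdot)$ is stated precisely at the level of spectral sequences, so this is taken care of.

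Where you go astray is in treating the passage from $\td$ to $\td'$ as a separate problem to be solved by ``lifting Szab\'o's own comparison to the integral level.'' There is no such separate step in the paper: the primed complex is \emph{defined} by $\hc'(\C,\eps)\defeq\hc(\textup{r}(\textup{m}(\C)),\eps)$, so the entire difference between $\td$ and $\td'$ (on all faces, not just $\td_1$) and the entire difference between the type-X and type-Y conventions are bundled into the single operation of replacing $\C$ by $\textup{r}(\textup{m}(\C))$. In particular the claim that ``the two differentials differ only through sign conventions attached to faces of the cube'' is not accurate -- passing to $\textup{r}(\textup{m}(\C))$ changes which type in $\ttt$ a given face has (for instance $\tf_{p,q}$ becomes $\tg_{q,p}$), so different basis elements are sent nontrivially, not just with different signs. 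The upshot is that your plan collapses to a single comparison: $\hc(\textup{r}(\textup{m}(\C)),\eps)$ versus $\hc(\C,\etaup)$, both instances of the \emph{same} construction on two configurations whose underlying unoriented configurations are $0$-resolutions of diagrams $\D_2$ and $\D_1$ of the same link. That is literally the $\xycor2$ maneuver, and once you see it the extra Szab\'o-lifting step evaporates. So the proposal reaches the right proof but with a spurious intermediate stage; be careful, since attempting to carry out that intermediate stage directly (relating $\td$ and $\td'$ chain-homotopically on a fixed diagram) is essentially the open comparison that Seed conjectured (Conjecture 4.14) and that this corollary is designed to \emph{prove}, not assume.
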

Let $\te '(\cdot;\z_2)$ be the spectral sequence induced by Szab\'o's complex with differential $\td '$. With Corollary \ref{primecor} we see the following, which was conjectured by Seed \cite[Conjecture 4.14]{seed}.
\begin{cor}\label{primecor2}
Let $L$ be a link. Then there is an isomorphism $\te '(L;\z_2)\cong\te(L;\z_2)$.
\end{cor}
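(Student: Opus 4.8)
The plan is to deduce the statement from the integral comparison in Corollary~\ref{primecor} by reducing coefficients modulo~$2$ while keeping everything on the level of filtered chain complexes. Fix a diagram $\D$ of $L$, let $\hc$ be the integral complex of Theorem~\ref{mainth}, and write $\hc'$ for the integral lift furnished by Proposition~\ref{primelem} (the lift of Szab\'o's complex with differential $\td'$, matched to a type~X edge assignment). Both are filtered complexes of free abelian groups, filtered by the $\textup{h}$-grading, and by Theorem~\ref{mainth} and Proposition~\ref{primelem} their mod~$2$ reductions are, as filtered complexes over $\z_2$, Szab\'o's complex with differential $\td$ and Szab\'o's complex with differential $\td'$ respectively. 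Consequently $\te(L;\z_2)$ is the spectral sequence of $\hc\otimes\z_2$ and $\te'(L;\z_2)$ is the spectral sequence of $\hc'\otimes\z_2$. It therefore suffices to exhibit a filtration-preserving chain homotopy equivalence $\hc\to\hc'$ over $\z$ (with filtration-preserving chain homotopies): reducing such data modulo~$2$ yields a filtered chain homotopy equivalence $\hc\otimes\z_2\to\hc'\otimes\z_2$, and any filtered chain homotopy equivalence induces an isomorphism of the associated spectral sequences on every page, so in particular $\te'(L;\z_2)\cong\te(L;\z_2)$.

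To produce the required integral filtered chain homotopy equivalence I would unwind the proof of Corollary~\ref{primecor}. That argument follows the pattern of Proposition~\ref{xycor}: it combines the Reidemeister invariance of each complex within its own type (which is proved by exhibiting, for every Reidemeister move, a filtration-preserving chain homotopy equivalence of the integral complexes) with a diagrammatic mechanism that converts, up to such an equivalence, a type~Y edge assignment together with the differential $\td$ into a type~X edge assignment together with the differential $\td'$. The task is to check that the composition of all these steps is again a filtration-preserving chain homotopy equivalence of integral complexes $\hc(\D)\to\hc'(\D)$, rather than only the abstract isomorphism of limiting invariants recorded in the statement of Corollary~\ref{primecor}. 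Each step is one of the chain-level equivalences already used to establish the invariance parts of Theorem~\ref{mainth} and Proposition~\ref{primelem}, so this should amount to careful bookkeeping with filtration degrees rather than new geometric input.

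The main obstacle is precisely this upgrade from invariants to chains: reduction modulo~$2$ does not in general commute with passing to the spectral sequence of a filtered complex --- $2$-torsion in the integral homology $\hh(L)$ can create additional differentials over $\z_2$ --- so an isomorphism $\te'(L)\cong\te(L)$ of integral spectral sequences is by itself not enough. One genuinely needs the comparison to take place on the chain level and over $\z$ throughout; once a filtered chain homotopy equivalence $\hc\simeq\hc'$ over $\z$ is in hand, base change along $\z\to\z_2$ closes the argument. A more direct route would be to write down a filtration-preserving automorphism of the mod~$2$ Khovanov chain groups intertwining $\td$ and $\td'$, but such a map seems hard to produce by hand, which is exactly why the detour through the integral lifts is worthwhile.
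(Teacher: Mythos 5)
Your proposal is correct and is, in substance, the same argument the paper has in mind. The paper's terse justification ``With Corollary~\ref{primecor} we see\dots'' should be read as ``with the \emph{proof} of Corollary~\ref{primecor},'' because that proof is entirely chain-level: the identity $\hc'(\C,\eps)=\hc(\textup{r}(\textup{m}(\C)),\eps)$ is a literal equality of filtered complexes, and the Reidemeister invariance of the spectral sequence is established via filtered chain maps inducing isomorphisms on the $\te^2$-page. Tensoring those chain-level data with $\z_2$ then yields the mod~$2$ statement directly (equivalently, one can run the whole argument over $\z_2$ from the start, invoking Szab\'o's own mod~$2$ Reidemeister invariance). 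You correctly flag the real subtlety here: the bare integral isomorphism $\te'(L)\cong\te(L)$ of Corollary~\ref{primecor} does not by itself imply the mod~$2$ isomorphism, since passage to a spectral sequence does not commute with reduction of coefficients; the comparison must be carried out at the level of filtered chains. One small imprecision: you do not actually need the homotopies themselves to be filtration-preserving. It suffices that the comparison chain maps are filtered and induce isomorphisms on the $\te^2$-page (which here is odd Khovanov homology, respectively mod~$2$ Khovanov homology), after which the standard comparison theorem for spectral sequences of bounded filtered complexes gives isomorphisms on all subsequent pages and on the abutment.
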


This article is organized as follows. In Section \ref{sodd} we briefly review the construction of odd Khovanov homology. We also prove Proposition \ref{xycor} (Corollary \ref{xycor2}). In Section \ref{sspec} we construct the complex of Theorem \ref{mainth} and give the stated properties. We also prove Proposition \ref{primelem} and Corollary \ref{primecor}. In Section \ref{sdep} we analyze how the differential of our complex depends on the orientation of the arcs. This leads to the proof that the isomorphism class of our complex is independent of the orientation of the arcs. The result of Section \ref{sdep} also helps us to prove that our differential really satisfies $d\circ d=0$. The author has done this proof in his master thesis \cite{master}. It is sketched in Section \ref{sdd}.
\begin{ack}
This is a reworked version of my master thesis which I wrote under supervision of Thomas Schick at Georg August University G\"ottingen in summer 2011. I wish to thank Thomas for suggesting this topic and for the freedom I enjoyed during the project.

I would like to thank Clara L\"oh for giving me many hints how to make this article more readable.
\end{ack}

\section{Odd Khovanov homology}\label{sodd}
We briefly review the construction of odd Khovanov homology \cite{ors}. For a link in $\s^3$ we get a diagram $\D$ in $\s^2$ with $n$ crossings \cite{buzie}. We get the 0-resolution of $\D$ as follows. For each crossing $x$ we change $\D$ in a small neighborhood of $x$ as in Figure \ref{0res}.
\begin{figure}[htbp]
\centering
\includegraphics{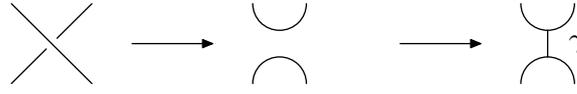}
\caption{0-resolution}
\label{0res}
\end{figure}
We call $\gamma$ the arc connecting the two segments of the 0-resolution of $x$. The 0-resolution of $\D$ is a collection of disjoint embedded circles in $\s^2$ connected by $n$ arcs. This leads us to the following definition.
\begin{defn}
An \emph{$n$-dimensional (oriented) configuration} is an equivalence class of a set of disjoint circles in $\s^2$ and $n$ disjoint embedded (oriented) arcs such that the boundary points of the arcs lie on the circles and the interiors of the arcs are disjoint from the circles. Two such sets are equivalent if there is an orientation preserving diffeomorphism of $\s^2$ that maps one set to the other one. We assume that the arcs of an $n$-dimensional configuration are numbered from 1 to $n$.
\end{defn}
With this definition the 0-resolution of $\D$ (with numbered crossings) is an $n$-dimensional configuration.
\begin{defn}
Let $\C$ be an oriented configuration.
\begin{enumerate}
\item[-] 
The unoriented configuration $\overline{\C}$ is the configuration one gets by forgetting the orientation on the arcs.
\item[-] 
The dual configuration $\C^*$ is the oriented configuration one gets by changing $\C$ in small neighborhoods of the arcs as in Figure \ref{cstar}. So the arcs are rotated by 90 degrees counter-clockwise.
\begin{figure}[htbp]
\centering
\includegraphics{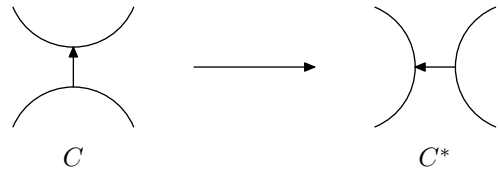}
\caption{$C\rightarrow C^*$}
\label{cstar}
\end{figure}
\item[-] 
The reverse configuration $\textup{r}(\C)$ is the oriented configuration one gets by reversing the orientation of all arcs. We have $(\C^*)^*=\textup{r}(\C)$.
\item[-] 
The mirror configuration $\textup{m}(\C)$ is the oriented configuration one gets by reversing the orientation of $\s^2$. We have $(\textup{m}(\C^*))^*=\textup{m}(\C)$.
\end{enumerate}
\end{defn}
\begin{defn}
Let $\C$ be an oriented configuration.
\begin{enumerate}
\item[-]
The circles of $\C$ are also called \emph{starting circles} of $\C$.
\item[-]
The circles of $\C^*$ are called \emph{ending circles} of $\C$.
\item[-]
The circles of $\C$ that are disjoint from all arcs are called \emph{passive circles} of $\C$. The passive circles of $\C$ build the 0-dimensional configuration~$\textup{pass}(\C)$.
\item[-]
The other circles of $\C$ are called \emph{active circles} of $\C$. We get the active part $\textup{act}(\C)$ of $\C$ by omitting all the passive circles.
\end{enumerate}
We call $\C$ \emph{active} if it is equal to $\textup{act}(\C)$, which means that $\C$ has no passive circles.
\end{defn}
The passive circles of $\C$ are equal to the passive circles of $C^*$. To define the odd Khovanov complex we need to look at the hypercube of resolutions. So we make the following definition.
\begin{defn}
For $k\le n\in\n_0$ the set of $k$-dimensional faces of the hypercube $[0,1]^n$ is
\[\f(n,k)\defeq\left\{a\in\{0,1,*\}^n\ \ \Bigm|\ \ \#\{i\in\{1,\dots,n\}\ \ |\ \ a_i=*\}=k\right\}.\]
So $\f(n,0)$ and $\f(n,1)$ are the vertices and edges of $[0,1]^n$. For $a\in\f(n,k)$ we get $a^0,\ a^1\in\f(n,0)$ from $a$ by replacing all * with 0 or 1 respectively. We call $a^0$ and $a^1$ the \emph{starting} and \emph{end point} of $a$.
\end{defn}
\begin{defn}
Let $\C$ be an $n$-dimensional oriented configuration. For all $a\in\f(n,k)$ we define the resolution $\tr(\C,a)$ to be the $k$-dimensional oriented configuration that we get from $\C$ as follows: For all $i\in\{1,\dots,n\}$ with $a_i=1$ we change $\C$ in a small neighborhood of the $i$-th arc as in Figure \ref{cstar}. Then we remove all arcs for which we have $a_i\ne*$.
\end{defn}
With this notation we can now define the odd Khovanov chain groups.
\begin{defn}
Let $\C$ be an $n$-dimensional oriented configuration. Then $\textup{V}(\C)$ is the free abelian group generated by the circles of $\C$. The \emph{odd Khovanov chain group} of $\C$ is the abelian group
\[\textup{C}(\C)\defeq\bigoplus_{a\in\f(n,0)}\Lambda\textup{V}(\tr(\C,a)),\]
where $\Lambda$ stands for the exterior algebra.
\end{defn}
Next we want to define the odd Khovanov differential that consists of maps at the edges $b\in\f(n,1)$ which depend on $\tr(\C,b)$. So we need to define $\z$-module~homomorphisms
\[\partial_\C:\Lambda\textup{V}(\C)\to\Lambda\textup{V}(\C^*)\]
for all active oriented one-dimensional configurations $\C$. For $\C$ there are the two possibilities in Figure \ref{1dim}.
\begin{figure}[htbp]
\centering
\includegraphics{gra.2}
\caption{}
\label{1dim}
\end{figure}
Then $\C^*$ is the other possibility respectively.
\begin{defn}
If $\C$ is a join we define
\[\partial_\C(1)=1,\qquad\partial_\C(x_1)=y,\qquad
\partial_\C(x_2)=y,\qquad\partial_\C(x_1\wedge x_2)=0.\]
If $\C$ is a split we define
\[\partial_\C(1)=x_1-x_2,\qquad\partial_\C(y)=x_1\wedge x_2.\]
\end{defn}
Next we want to define $\partial_\C$ for the case when $\C$ is not necessarily active.
\begin{defn}\label{nact}
For $m\in\n_0$ and passive circles $z_1,\dots,z_m$ of $\C$ set $\omega\defeq z_1\wedge\cdots\wedge z_m$. If $\textup{act}(\C)$ is a join we define
\[\partial_\C(\omega)=\omega,\qquad\partial_\C(x_1\wedge\omega)=
y\wedge\omega,\qquad\partial_\C(x_2\wedge\omega)=y\wedge\omega,
\qquad\partial_\C(x_1\wedge x_2\wedge\omega)=0.\]
If $\textup{act}(\C)$ is a split we define
\[\partial_\C(\omega)=(x_1-x_2)\wedge\omega,\qquad
\partial_\C(y\wedge\omega)=x_1\wedge x_2\wedge\omega.\]
\end{defn}
Now let $\C$ be an oriented $n$-dimensional configuration and $a\in\f(n,1)$ an edge in the hypercube of resolutions. Then we get a $\z$-module~homomorphism
\[\partial_{\tr(\C,a)}:\Lambda\underbrace{\V(\tr(\C,a))}_{=\V(\tr(\C,a^0))}\to
\Lambda\underbrace{\V((\tr(\C,a))^*)}_{=\V(\tr(\C,a^1))}.\]
From these maps we want to get a differential on $\textup{C}(\C)$. Therefore we have to analyze, how the four edges of a square in the hypercube of resolutions behave. Let $\C$ be an oriented 2-dimensional configuration and
\[a\defeq(*,0),\qquad b\defeq(1,*),\qquad c\defeq(0,*),\qquad d\defeq(*,1).\]
What is the relation between $\partial_{\tr(\C,b)}\circ\partial_{\tr(\C,a)}$ and $\partial_{\tr(\C,d)}\circ\partial_{\tr(\C,c)}$? If
\[0\ne\partial_{\tr(\C,b)}\circ\partial_{\tr(\C,a)}
=\partial_{\tr(\C,d)}\circ\partial_{\tr(\C,c)}\]
we call $\C$ of \emph{type K}. If
\[0\ne\partial_{\tr(\C,b)}\circ\partial_{\tr(\C,a)}
=-\partial_{\tr(\C,d)}\circ\partial_{\tr(\C,c)}\]
we call $\C$ of \emph{type A}. One can easily check the following: If $\textup{act}(\C)$ has three or four circles, then $\C$ is of type K. If $\textup{act}(\C)$ has two circles and is disconnected, then $\C$ is of type A. The other possibilities for $\textup{act}(\C)$ for type K and A are given in Figure \ref{f1} and Figure \ref{f2}.
\begin{figure}[htbp]
\centering
\includegraphics{gra.13}
\caption{type K}
\label{f1}
\end{figure}
\begin{figure}[htbp]
\centering
\includegraphics{gra.14}
\caption{type A}
\label{f2}
\end{figure}
If the arcs in this figures are not oriented, this means that the orientation can be chosen arbitrarily. There are two possibilities for $\textup{act}(\C)$ where $\C$ is neither of type K nor of type A. They are given in Figure \ref{fxy} and are called of \emph{type X} and \emph{Y} respectively.
\begin{figure}[htbp]
\centering
\includegraphics{gra.15}
\caption{}
\label{fxy}
\end{figure}
In this cases we have
\[0=\partial_{\tr(\C,b)}\circ\partial_{\tr(\C,a)}
=\partial_{\tr(\C,d)}\circ\partial_{\tr(\C,c)}.\]
To define the odd Khovanov differential we need the following extra data.
\begin{defn}
Let $n\in\n_0$. A map $\eps:\f(n,1)\to\{-1,1\}$ is called an \emph{edge assignment}. For a square $s\in\f(n,2)$ let $a,\ b,\ c,\ d\in\f(n,1)$ be the edges of $s$. Define $p(s,\eps)\defeq\eps(a)\eps(b)\eps(c)\eps(d)$.
\end{defn}
\begin{defn}\label{defxy}
Let $\C$ be an $n$-dimensional oriented configuration and $\eps$ an edge assignment. We call $\eps$ of \emph{type X} with respect to $\C$ if for all $s\in\f(n,2)$ we have
\[p(s,\eps)=\begin{cases}
1\qquad&\textup{if $\tr(\C,s)$ is of type A or X,}\\
-1\qquad&\textup{if $\tr(\C,s)$ is of type K or Y.}
\end{cases}\]
We call $\eps$ of \emph{type Y} with respect to $\C$ if for all $s\in\f(n,2)$ we have
\[p(s,\eps)=\begin{cases}
1\qquad&\textup{if $\tr(\C,s)$ is of type A or Y,}\\
-1\qquad&\textup{if $\tr(\C,s)$ is of type K or X.}
\end{cases}\]
\end{defn}
\begin{thm}
For every oriented configuration $\C$ there is an edge assignment of type X and an edge assignment of type Y.
\end{thm}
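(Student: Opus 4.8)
The plan is to recast the statement as an obstruction problem for the cochain complex of the cube $[0,1]^n$, with $n$ the dimension of $\C$, and then to reduce the obstruction to a finite, purely local check on three-dimensional faces.

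First I would identify $\{-1,1\}$ with $\z/2$ by writing $\eps(a)=(-1)^{e(a)}$, so that an edge assignment $\eps\colon\f(n,1)\to\{-1,1\}$ is exactly the datum of a $1$-cochain $e\in C^1([0,1]^n;\z/2)$. For a square $s\in\f(n,2)$ with edges $a,b,c,d$ one then has $p(s,\eps)=(-1)^{(\delta e)(s)}$, where $\delta$ is the cubical coboundary and $(\delta e)(s)=e(a)+e(b)+e(c)+e(d)\in\z/2$. Next I would record the right-hand sides of Definition \ref{defxy} as two $2$-cochains $\theta_X,\theta_Y\in C^2([0,1]^n;\z/2)$: put $\theta_X(s)=0$ if $\tr(\C,s)$ is of type~A or~X and $\theta_X(s)=1$ if it is of type~K or~Y, and dually put $\theta_Y(s)=0$ if $\tr(\C,s)$ is of type~A or~Y and $\theta_Y(s)=1$ if it is of type~K or~X. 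With this translation an edge assignment of type~X with respect to $\C$ is precisely a solution of $\delta e=\theta_X$, and an edge assignment of type~Y is precisely a solution of $\delta e=\theta_Y$.

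Since $[0,1]^n$ is contractible we have $H^2([0,1]^n;\z/2)=0$, so each of these equations is solvable as soon as $\theta_X$, respectively $\theta_Y$, is a cocycle. The cocycle condition is tested on a $3$-dimensional face $t\in\f(n,3)$: writing $s_1,\dots,s_6$ for its six $2$-faces, one needs $\sum_{j=1}^{6}\theta_X(s_j)=0$ in $\z/2$, and likewise for $\theta_Y$. Each $\tr(\C,s_j)$ is obtained from the $3$-dimensional oriented configuration $\tr(\C,t)$ by resolving one of its three arcs while keeping the other two, so everything reduces to the following universal combinatorial assertion: \emph{for every $3$-dimensional oriented configuration $\D$, an even number of its six $2$-dimensional resolutions are of type~K or~Y, and an even number of them are of type~K or~X.} I expect this reduction to be the main obstacle. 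To prove it I would enumerate the possibilities for $\D$ — it suffices to keep track of $\textup{act}(\D)$ together with the way the (at most three) arcs attach to its circles — and, using that $\tr(\D,s)$ is of type~K whenever its active part has three or four circles and of type~A whenever its active part consists of two disconnected circles, together with Figures \ref{f1}, \ref{f2} and \ref{fxy} for the remaining configurations, read off the six types and verify the parity in each case. Once $\theta_X$ and $\theta_Y$ are known to be cocycles, I would conclude either by invoking $H^2([0,1]^n;\z/2)=0$ directly, or, to stay self-contained, by building $e$ inductively along an enumeration of the edges of $[0,1]^n$ refining the face order and using $\delta\theta_X=0$ (respectively $\delta\theta_Y=0$) at each step to check that the constraints coming from the already-labelled squares are compatible; this produces the desired edge assignments of both types.
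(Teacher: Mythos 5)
Your proposal takes essentially the same route as the source the paper defers to: the paper proves this theorem only by citing \cite[Lemma 1.2]{ors}, and the ORS proof is precisely the cohomological obstruction argument you describe — encode the sign assignment as a $1$-cochain $e$ with $\eps=(-1)^e$, encode the type conditions of Definition \ref{defxy} as a target $2$-cochain $\theta_X$ (resp.\ $\theta_Y$), observe that solvability of $\delta e=\theta_X$ follows from $\theta_X$ being a cocycle since $H^2([0,1]^n;\z/2)=0$, and verify the cocycle condition by a parity check over the six $2$-faces of each $3$-face, which is a statement purely about $3$-dimensional oriented configurations. So the reduction and the framing are both right.

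However, be aware that the parity check is not a side remark — it is the entire mathematical content of the lemma, and your proposal leaves it as a promised enumeration rather than carrying it out. The statement ``for every $3$-dimensional oriented configuration $\D$, an even number of its six $2$-dimensional resolutions are of type K or Y, and likewise for K or X'' requires a genuine case analysis (ORS organize it by the combinatorics of the active part and the attachment pattern of the three arcs, using symmetries to cut down the list), and there is no shortcut: one must actually check that $\theta_X$ and $\theta_Y$ close. Until that enumeration is written down, what you have is a correct plan with the key lemma unverified. Two smaller points worth making explicit if you flesh this out: (i) you should note that the types A, K, X, Y exhaust all $2$-dimensional oriented configurations with non-zero square maps and that the dichotomy is well-defined (this is implicit in Section \ref{sodd}); and (ii) both parity statements follow from one another by replacing $\C$ with $\textup{r}(\textup{m}(\C))$, since this exchanges types X and Y while fixing A and K — the paper uses this symmetry in the proof of Corollary \ref{xycor2} — so you only need to verify one of the two cocycle conditions by hand.
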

\begin{proof}
A proof is given by Ozsv\'ath, Rasmussen and Szab\'o \cite[Lemma 1.2]{ors}.
\end{proof}
\begin{defn}
Let $\C$ be an oriented configuration and $\eps$ an edge assignment of type X or Y with respect to $\C$. Then the \emph{odd Khovanov complex} is
\[\textup{C}(\C,\eps)\defeq\biggl(\textup{C}(\C)=\bigoplus_{a\in\f(n,0)}
\Lambda\V(\tr(\C,a)),\ \partial(\C,\eps)\defeq\bigoplus_{b\in\f(n,1)}
\eps(b)\partial_{\tr(\C,b)}\biggr).\]
\end{defn}
With Definition \ref{defxy} we get:
\begin{prop}
The composition $\partial(\C,\eps)\circ\partial(\C,\eps)$ is trivial and so $\textup{C}(\C,\eps)$ is indeed a chain complex.
\end{prop}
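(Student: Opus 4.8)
The plan is to localise the composition $\partial(\C,\eps)\circ\partial(\C,\eps)$ at the squares of the hypercube. Abbreviate $\partial\defeq\partial(\C,\eps)=\sum_{b\in\f(n,1)}\eps(b)\,\partial_{\tr(\C,b)}$. Every summand of $\partial\circ\partial$ is a composite $\eps(b')\eps(b)\,\partial_{\tr(\C,b')}\circ\partial_{\tr(\C,b)}$ for a composable pair of edges $b,b'$ (meaning ${b'}^0=b^1$), and such a pair necessarily changes two \emph{distinct} coordinates of the hypercube from $0$ to $1$, hence determines a unique square $s\in\f(n,2)$ through which the composite factors. Conversely, labelling the edges of such a square $s$ by $a,b,c,d\in\f(n,1)$ as in the excerpt — so that $\partial_{\tr(\C,b)}\circ\partial_{\tr(\C,a)}$ and $\partial_{\tr(\C,d)}\circ\partial_{\tr(\C,c)}$ are the two directed edge-paths from $s^0$ to $s^1$ — the total contribution of $s$ to $\partial\circ\partial$ is
\[
\eps(a)\eps(b)\,\partial_{\tr(\C,b)}\circ\partial_{\tr(\C,a)}
\ +\ \eps(c)\eps(d)\,\partial_{\tr(\C,d)}\circ\partial_{\tr(\C,c)} .
\]
So it is enough to check that this expression vanishes for every $s$.

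First I would dispose of passive circles: the circles of $\tr(\C,s)$ disjoint from both remaining arcs stay passive after resolving either arc, and on them all four edge maps act by wedging along the fixed element $\omega$ of Definition \ref{nact}; hence the displayed expression reduces to the corresponding one for $\textup{act}(\tr(\C,s))$, and it suffices to treat active two-dimensional configurations. Then I run the case analysis on the type of $\tr(\C,s)$ recalled above. If $\tr(\C,s)$ is of type X or Y, both composites $\partial_{\tr(\C,b)}\circ\partial_{\tr(\C,a)}$ and $\partial_{\tr(\C,d)}\circ\partial_{\tr(\C,c)}$ are already $0$, so the contribution vanishes for any edge assignment. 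If $\tr(\C,s)$ is of type K, then $\partial_{\tr(\C,b)}\circ\partial_{\tr(\C,a)}=\partial_{\tr(\C,d)}\circ\partial_{\tr(\C,c)}$, and in either case (whether $\eps$ is of type X or of type Y) Definition \ref{defxy} forces $p(s,\eps)=\eps(a)\eps(b)\eps(c)\eps(d)=-1$, i.e.\ $\eps(a)\eps(b)=-\eps(c)\eps(d)$, so the two terms cancel. If $\tr(\C,s)$ is of type A, then $\partial_{\tr(\C,b)}\circ\partial_{\tr(\C,a)}=-\partial_{\tr(\C,d)}\circ\partial_{\tr(\C,c)}$ while $p(s,\eps)=1$ in both cases, i.e.\ $\eps(a)\eps(b)=\eps(c)\eps(d)$, so again the two terms cancel. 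Summing over all $s\in\f(n,2)$ yields $\partial(\C,\eps)\circ\partial(\C,\eps)=0$.

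The one substantive input, which I would simply cite (it is recorded in the text above and in \cite{ors}), is the classification of active two-dimensional configurations into the types K, A, X, Y together with the stated relations between the two edge-path composites — a finite inspection of the possible pictures for $\textup{act}(\C)$. The only point requiring care is keeping the two sign sources straight: the arc-orientation signs built into each $\partial_\C$ via Definition \ref{nact}, and the combinatorial signs $p(s,\eps)$, the latter being designed in Definition \ref{defxy} precisely so as to compensate the former square by square; once the contribution of a square is put in the displayed form, this compensation is transparent and no computation beyond the classification is needed.
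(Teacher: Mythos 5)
Your argument is correct and is precisely the one the paper leaves implicit: the paper states this proposition with no explicit proof beyond the remark that it follows from Definition~\ref{defxy}, i.e.\ from the classification of two-dimensional configurations into types K, A, X, Y together with the matching sign rule $p(s,\eps)$. Your square-by-square cancellation, after reducing to the active part of $\tr(\C,s)$, is exactly what that remark encodes.
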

Next we want to define a bigrading on the odd Khovanov complex.
\begin{defn}
Let $\C$ be an oriented $n$-dimensional configuration. Then $|\C|$ is the number of circles of $\C$. For $a\in\f(n,0)$ we define $|a|\defeq\sum_{i=1}^na_i$. We get the \emph{h-grading} on $\textup{C}(\C)$ by
\[\textup{C}(\C)^\textup{h}\defeq\bigoplus_{a\in\f(n,0),|a|=\textup{h}}\Lambda\V(\tr(\C,a))\]
for $\textup{h}\in\z$ and the \emph{$\delta$-grading} by
\[\textup{C}(\C)_\delta\defeq\bigoplus_{m\in\mathbb{N}_0,a\in\f(n,0):\ |\tr(\C,a)|-2m-|a|=\delta}\Lambda^m\V(\tr(\C,a))\]
for $\delta\in\z$.
Furthermore let $\textup{C}(\C)_{\textup{h},\delta}\defeq\textup{C}(\C)^\textup{h}\cap\textup{C}(\C)_\delta$.
\end{defn}
We have $\textup{C}(\C)=\bigoplus_{\textup{h},\delta\in\z}\textup{C}(\C)_{\textup{h},\delta}$. The differential $\partial(\C,\eps)$ raises the h-grading by 1 and decreases the $\delta$-grading by 2. The next theorem shows that $\textup{C}(\C,\eps)$ depends only on $\overline{\C}$ and the type of $\eps$.
\begin{thm}
Let $\C$ and $\D$ be oriented configurations with $\overline{\C}=\overline{\D}$. Let $\eps$ and $\etaup$ be edge assignments, so that $\eps$ has the same type (X or Y) with respect to $\C$ as $\etaup$ with respect to $\D$. Then $\textup{C}(\C,\eps)$ and $\textup{C}(\D,\etaup)$ are isomorphic as bigraded chain complexes.
\end{thm}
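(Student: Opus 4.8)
The plan is to factor the desired isomorphism through two elementary moves: reversing the orientation of a single arc, and changing the edge assignment within a fixed type. Since $\overline{\C}=\overline{\D}$, after choosing compatible representatives the configuration $\D$ arises from $\C$ by reversing the orientations of the arcs in some subset $S\subseteq\{1,\dots,n\}$; performing these reversals one arc at a time, it therefore suffices to handle a single arc reversal and, afterwards, to pass from the resulting edge assignment to the prescribed $\etaup$.

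So first fix $i$ and let $\D$ be $\C$ with its $i$-th arc reversed. Reversing an arc changes neither the circles in any resolution $\tr(\cdot,a)$ nor the numbers $|a|$, so $\textup{C}(\C)$ and $\textup{C}(\D)$ coincide as bigraded abelian groups. For an edge $b\in\f(n,1)$ we have $\tr(\D,b)=\tr(\C,b)$ whenever $b_i\ne *$, while $\tr(\D,b)=\textup{r}(\tr(\C,b))$ when $b_i=*$, the $i$-th arc then being the unique arc of the one-dimensional configuration $\tr(\C,b)$. Inspecting Definition \ref{nact}, reversing the arc of a split interchanges the two resulting circles $x_1,x_2$, so $\partial_{\textup{r}(\C')}=-\partial_{\C'}$ when $\textup{act}(\C')$ is a split, whereas $\partial_{\textup{r}(\C')}=\partial_{\C'}$ when $\textup{act}(\C')$ is a join. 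Hence $\partial_{\tr(\D,b)}=\sigma(b)\,\partial_{\tr(\C,b)}$, where $\sigma(b)=-1$ if $b_i=*$ and $\tr(\C,b)$ is a split and $\sigma(b)=1$ otherwise. Setting $\eps'\defeq\eps\cdot\sigma$ (that is, $\eps'(b)=\eps(b)\sigma(b)$) we obtain $\eps'(b)\,\partial_{\tr(\D,b)}=\eps(b)\,\partial_{\tr(\C,b)}$ for every edge $b$, so $\textup{C}(\D,\eps')$ and $\textup{C}(\C,\eps)$ are literally the same bigraded chain complex.

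Next I would check that $\eps'$ has, with respect to $\D$, the same type (X or Y) as $\eps$ has with respect to $\C$; by Definition \ref{defxy} this means comparing $p(s,\eps')$ with the type of $\tr(\D,s)$ for every square $s\in\f(n,2)$. If $s_i\ne *$ then $\sigma$ is $1$ on all four edges of $s$ and $\tr(\D,s)=\tr(\C,s)$, so nothing is needed. If $s_i=*$, say with the two $*$'s in positions $i$ and $j$, then $p(s,\eps')=\sigma(b_1)\sigma(b_2)\,p(s,\eps)$, where $b_1,b_2$ are the two edges of $s$ with a $*$ in position $i$; and $\sigma(b_1)\sigma(b_2)=-1$ precisely when the $i$-th arc toggles between being a split and being a join as the $j$-th crossing is switched from its $0$- to its $1$-resolution. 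The crucial combinatorial fact — which one reads off from the classification behind Figures \ref{f1}, \ref{f2} and \ref{fxy} — is that this toggling happens exactly when $\tr(\C,s)$ is of type X or of type Y, and not when it is of type K or A, while at the same time reversing the $i$-th arc interchanges types X and Y and fixes types K and A. A check of the four types then shows that $p(s,\eps')$ equals the value prescribed by Definition \ref{defxy} for $\tr(\D,s)$, so $\eps'$ is indeed of the same type with respect to $\D$.

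Iterating over the arcs in $S$ gives $\textup{C}(\C,\eps)=\textup{C}(\D,\eps'')$ for some edge assignment $\eps''$ of the same type as $\eps$, now with respect to $\D$. Since $\etaup$ is of that same type with respect to $\D$, the product $\mu\defeq\eps''\cdot\etaup$ ($\mu(b)=\eps''(b)\etaup(b)$) satisfies $p(s,\mu)=p(s,\eps'')\,p(s,\etaup)=1$ for every square $s$; as the cube $[0,1]^n$ is contractible, $\mu$ is a coboundary, i.e. $\mu(b)=\lambda(b^0)\lambda(b^1)$ for some $\lambda\colon\f(n,0)\to\{-1,1\}$. Multiplying the summand of $\textup{C}(\D)$ at each vertex $a$ by $\lambda(a)$ defines a bigraded chain isomorphism $\textup{C}(\D,\eps'')\to\textup{C}(\D,\etaup)$, and composing it with the equality just obtained yields $\textup{C}(\C,\eps)\cong\textup{C}(\D,\etaup)$. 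Of the ingredients above, the one I expect to be the main obstacle is the geometric claim in the third paragraph about when the split/join status of an arc changes and about reversal interchanging types X and Y; the rest — invariance of the underlying bigraded group, the sign bookkeeping, and the coboundary (gauge) argument — is formal and mirrors the even Khovanov case.
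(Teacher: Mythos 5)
Your overall decomposition is exactly the one the paper uses via its citation of ORS (Lemmas~2.2 and 2.3 there): reverse the arcs one at a time using the twist $\eps'=\eps\cdot\sigma$ with $\sigma(b)=-1$ iff $b_i=*$ and $\tr(\C,b)$ is a split, then absorb the remaining discrepancy $\eps''\etaup$ into a coboundary $\lambda$. The gauge argument in your final paragraph is fine. But the ``crucial combinatorial fact'' in your third paragraph is false on both counts, and it is precisely the step you singled out as the main obstacle. Take the $2$-dimensional configuration consisting of two disjoint circles $c_1,c_2$ joined by two arcs $\gamma_1,\gamma_2$ (the configuration underlying types $\ta_2$ and $\tb_2$ in Table~\ref{ttypes}). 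At the edge $(*,0)$ the arc $\gamma_1$ is a join; after $1$-resolving $\gamma_2$, i.e.\ at $(*,1)$, it is a chord of a single circle, hence a split. So the first arc \emph{does} toggle join/split here. Yet this configuration is of type K or A, not X or Y: $\partial_{(1,*)}\circ\partial_{(*,0)}(1)$ is join-then-split, giving a nonzero $x_1-x_2$. Moreover, reversing $\gamma_1$ negates $\partial_{(*,1)}(1)=x_1-x_2$ while leaving the other three edge maps unchanged, so it interchanges type K and type A. Thus toggling is not confined to X/Y, and arc reversal does not fix K and A.

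The argument is rescuable because the two errors cancel: the correct statement is that $\sigma(b_1)\sigma(b_2)=-1$ exactly when reversing the $i$-th arc changes the type of the square in a way that flips the value $p(s,\cdot)$ prescribed by Definition~\ref{defxy} --- i.e.\ when it sends $\textup{X}\leftrightarrow\textup{Y}$ (which it always does on the ladybug square) \emph{or} $\textup{K}\leftrightarrow\textup{A}$, and the latter happens precisely when the reversed arc toggles inside the square. That is what the ORS proof of Lemma~2.3 verifies, square type by square type, and the paper (and the present paper's proof of Theorem~\ref{dhth}) relies on it. As written, your lemma is wrong and the ``check of the four types'' you invoke would not go through; with the corrected lemma the proof does.
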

\begin{proof}
A proof is given by Ozsv\'ath, Rasmussen and Szab\'o \cite[Lemma 2.2 and Lemma 2.3]{ors}.
\end{proof}
Ozsv\'ath, Rasmussen and Szab\'o claim that $\textup{C}(\C,\eps)$ and $\textup{C}(\D,\etaup)$ are also isomorphic if $\eps$ with respect to $\C$ and $\etaup$ with respect to $\D$ are of opposite types \cite[Lemma 2.4]{ors}. But it seems that they do not give an isomorphism. In Corollary~\ref{xycor2} we will conclude from the Reidemeister invariance of odd Khovanov homology that the homology groups of $\textup{C}(\C,\eps)$ and $\textup{C}(\D,\etaup)$ are isomorphic if $\eps$ with respect to $\C$ and $\etaup$ with respect to $\D$ are of opposite types.

Next we want to define the odd Khovanov complex for link diagrams.
\begin{defn}
Let $\D$ be a link diagram. Then $\textup{n}_+(\D)$ and $\textup{n}_-(\D)$ are the number of plus and minus crossings in $\D$, see Figure \ref{f+-}.
\end{defn}
\begin{figure}[htbp]
\centering
\includegraphics{gra.5}
\caption{}
\label{f+-}
\end{figure}
\begin{defn}\label{defkd}
Let $\C$ be an oriented configuration, so that $\overline{\C}$ is a 0-resolution of $\D$. Then we define $\textup{C}(\D,\C)^\textup{h}\defeq\textup{C}(\C)^{\textup{h}+\textup{n}_-(\D)}$ and $\textup{C}(\D,\C)_\delta\defeq\textup{C}(\C)_{\delta-\textup{n}_+(\D)}$. Furthermore let $\eps$ be an edge assignment of type X or Y with respect to $\C$. Then the \emph{odd Khovanov complex} $\textup{C}(\D,\C,\eps)$ is the bigraded chain complex that only differs from $\textup{C}(\C,\eps)$ in the bigrading as described.
\end{defn}
The odd Khovanov homology is Reidemeister invariant in the following sense.
\begin{thm}
Let $\D_1$ and $\D_2$ be link diagrams that differ by finitely many Reidemeister moves. Then there exist oriented configurations $\C_1$, $\C_2$, so that $\overline{\C}_1$, $\overline{\C}_2$ are 0-resolutions of $\D_1$, $\D_2$ and edge assignments $\eps_1$, $\eps_2$, so that $\eps_1$ with respect to $\C_1$ and $\eps_2$ with respect to $\C_2$ have type X and so that the homology groups $\textup{H}_{\textup{h},\delta}(\textup{C}(\D_1,\C_1,\eps_1))$ and $\textup{H}_{\textup{h},\delta}(\textup{C}(\D_2,\C_2,\eps_2))$ are isomorphic for all $\textup{h},\delta\in\z$. The analogous statement holds for type Y.
\end{thm}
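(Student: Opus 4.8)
The plan is to reduce, by induction on the number of Reidemeister moves, to the case in which $\D_1$ and $\D_2$ differ by a single Reidemeister move of one of the finitely many standard types (the two variants of the first move, the second move, and the third move). Fix such a move; then $\D_1$ and $\D_2$ agree outside a small disk. I would choose the oriented configurations $\C_1$ and $\C_2$ so that $\overline{\C}_1$ and $\overline{\C}_2$ are the corresponding 0-resolutions and so that the arcs of $\C_1$ and of $\C_2$ agree outside that disk. By the invariance result above, $\textup{C}(\C_i,\eps_i)$ depends, up to isomorphism of bigraded chain complexes, only on $\overline{\C}_i$ and the type of $\eps_i$; hence it suffices to exhibit one pair $\eps_1,\eps_2$ of edge assignments of type X (respectively of type Y) and to compare the resulting complexes.

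For the chosen move I would use the mapping-cone decomposition of the odd Khovanov complex along the crossing or crossings that take part in the move. The hypercube of resolutions of $\D_1$ is obtained from that of $\D_2$ by adjoining one coordinate (first move), two coordinates (second move), or by a rearrangement of coordinates (third move), and $\textup{C}(\C_1,\eps_1)$ is correspondingly an iterated mapping cone of the one-dimensional maps $\eps_1(b)\partial_{\tr(\C_1,b)}$ at the new edges $b$. For the first and second moves one finds inside this cone an acyclic subcomplex --- generated by the evident "bubble" element, respectively by the evident Reidemeister-II pair --- on which the relevant $\partial_{\tr(\C_1,b)}$ restricts to an isomorphism, as one reads off immediately from the explicit formulas for $\partial_\C$ and from Definition \ref{nact}. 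Cancelling it by Gaussian elimination, which preserves homology and, since the cancelled map is homogeneous of degree zero, also the bigrading, leaves a complex isomorphic to $\textup{C}(\C_2,\eps_2)$; the shifts built into Definition \ref{defkd} are exactly what align the $(\textup{h},\delta)$-bigradings. For the third move I would argue as Bar-Natan \cite{barnat} does in the even case, by comparing the two braid-like resolutions of the triple point, observing that the two-dimensional faces occurring in the relevant sub-hypercube are all of type A or K, and concluding that the classical chain homotopy equivalence lifts; alternatively one reduces the third move to two applications of the second.

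The point to keep track of throughout is that the edge-assignment type is preserved: every cancellation above is performed geometrically and uses only that $\eps_i$ is of a fixed type, not its individual values, so that a type X edge assignment on one side is matched by a type X edge assignment on the other, and likewise for type Y --- which is what makes the two halves of the statement independent and avoids any circularity with Proposition \ref{xycor}. For a complete and careful treatment I would cite Ozsv\'ath, Rasmussen and Szab\'o \cite{ors}, who carry out precisely this analysis. The step I expect to be the main obstacle is the third move, where one must verify that the sign factors $\eps_1(b)$ on the eight-vertex sub-hypercube are consistent with the classical homotopy equivalence; equivalently, that the type X and type Y squares which can appear there do not obstruct the lift.
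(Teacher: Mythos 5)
Your proposal gives a correct outline of the Reidemeister-invariance argument and concludes, exactly as the paper does, by deferring to Ozsv\'ath, Rasmussen and Szab\'o \cite[Propositions~3.1--3.3]{ors}; the paper's entire proof of this theorem is precisely that citation. So the approach is the same, with your version supplying a sketch of the argument that the paper leaves entirely to the reference.
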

\begin{proof}
A proof is given by Ozsv\'ath, Rasmussen and Szab\'o \cite[Proposition 3.1, Proposition 3.2, and Proposition 3.3]{ors}.
\end{proof}
So we assign to a link $L$ two isomorphism classes of bigraded abelian groups $\textup{Kh}_\textup{X}'(L)$ and $\textup{Kh}_\textup{Y}'(L)$, odd Khovanov homology of type X and of type Y. The next result shows that $\textup{Kh}_\textup{X}'(L)$ and $\textup{Kh}_\textup{Y}'(L)$ are the same.
\begin{cor}\label{xycor2}
Let $\C$ be an oriented configuration. Let $\eps$ be an edge assignment of type X and $\etaup$ be an edge assignment of type Y with respect to $\C$. Then the homology groups $\textup{H}_{\textup{h},\delta}(\textup{C}(\C,\eps))$ and $\textup{H}_{\textup{h},\delta}(\textup{C}(\C,\etaup))$ are isomorphic for all $\textup{h},\delta\in\z$.
\end{cor}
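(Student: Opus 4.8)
The plan is to deduce the statement from the Reidemeister invariance of odd Khovanov homology, which the preceding theorem supplies for type-X edge assignments and, in exactly the same form, for type-Y edge assignments. First I would realize $\C$ as a resolution: inserting one crossing with arbitrarily chosen over/under information at each of the $n$ arcs of $\overline{\C}$ yields a link diagram $\D$ whose $0$-resolution is $\overline{\C}$; let $L$ be the link it represents. By the theorem that $\textup{C}(\C,\eps)$ depends, up to isomorphism of bigraded chain complexes, only on $\overline{\C}$ and on the type of $\eps$, together with Definition~\ref{defkd}, the groups $\textup{H}_{\textup{h},\delta}(\textup{C}(\C,\eps))$ and $\textup{H}_{\textup{h},\delta}(\textup{C}(\D,\C,\eps))$ differ by a shift of the bigrading that depends on $\D$ but not on whether the edge assignment has type X or Y, and the same holds with $\etaup$ in place of $\eps$. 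So it suffices to prove that the type-X and the type-Y odd Khovanov homology of $L$ agree in every bidegree, and by the Reidemeister invariance theorem and its type-Y analogue both are link invariants, hence computable from any diagram of $L$.

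Next I would isolate the discrepancy between the two constructions on one configuration. If $\eps$ is of type X and $\etaup$ of type Y with respect to the same $n$-dimensional configuration $\C'$, then $p(s,\eps\etaup)=p(s,\eps)\,p(s,\etaup)$, and Definition~\ref{defxy} gives $p(s,\eps\etaup)=1$ whenever $\tr(\C',s)$ is of type K or A and $p(s,\eps\etaup)=-1$ whenever $\tr(\C',s)$ is of type X or Y, so the incompatibility of $\eps$ and $\etaup$ is concentrated exactly on the squares carrying a configuration of type X or of type Y. In particular, if the hypercube of resolutions of $\C'$ contains no square of type X and none of type Y, the two conditions in Definition~\ref{defxy} coincide on every square: then a type-X edge assignment is simultaneously of type Y, and $\textup{C}(\C',\eps)$ and $\textup{C}(\C',\etaup)$ can be taken to be literally the same complex. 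The plan is therefore to use the Reidemeister invariance of both homologies to replace $\D$ by a diagram of $L$ whose hypercube of resolutions has no type-X and no type-Y squares --- on such a diagram $\textup{Kh}_\textup{X}'(L)=\textup{Kh}_\textup{Y}'(L)$ in every bidegree, trivially --- and then to carry the isomorphism back to $\C$ through the shift of the first paragraph.

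The hard part is this last step: showing that every link possesses such a diagram, equivalently that the combinatorial obstruction by which a type-X and a type-Y edge assignment differ can always be removed by Reidemeister moves. I expect essentially all of the work to sit here: one must classify which pairs of crossings of a diagram produce a type-X or type-Y square (only when the corresponding partial resolution consists of a single active circle cut by two interleaving arcs with a chiral orientation pattern), and then exhibit local Reidemeister moves that destroy such a pattern while the crossings they introduce only ever contribute type-K and type-A squares. Everything up to that point is bookkeeping with the already-established invariance statements and the grading shift of Definition~\ref{defkd}. If it turned out that the obstruction could not be removed diagram-locally, the fallback would be a more global argument --- for instance an induction on the number of crossings using the unoriented skein long exact sequence satisfied by both the type-X and the type-Y homology.
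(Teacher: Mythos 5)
Your proposal opens the same way the paper does --- realize $\overline{\C}$ as the $0$-resolution of a diagram $\D$ of a link $L$, use Definition~\ref{defkd} to translate the bigrading shift, and reduce to $\textup{Kh}_\textup{X}'(L)\cong\textup{Kh}_\textup{Y}'(L)$ via Reidemeister invariance --- but from there the two arguments diverge completely, and the divergence is exactly where your proof has a gap. The paper does \emph{not} try to find a diagram of $L$ whose hypercube is free of type-X and type-Y squares. Instead it observes that the operation $\C\mapsto\textup{r}(\textup{m}(\C))$ (reverse the orientation of $\s^2$, then reverse all arc orientations) sends squares of type A, K, X, Y to squares of type A, K, Y, X respectively; consequently an edge assignment $\eps$ of type X for $\C$ is literally of type Y for $\textup{r}(\textup{m}(\C))$, and the canonical identification of the underlying bigraded groups is a chain isomorphism $\textup{C}(\C,\eps)\cong\textup{C}(\textup{r}(\textup{m}(\C)),\eps)$. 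Then $\overline{\textup{m}(\C)}$ is the $0$-resolution of the diagram $\D_2$ obtained from $\D_1$ by reversing the sphere orientation and swapping over/under strands at every crossing, which is a diagram of the same link (view $L$ from the other side of the projection plane), and a single application of Reidemeister invariance \emph{for type Y only} closes the loop.

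Your plan, by contrast, rests on the claim that every link admits a diagram whose hypercube contains no type-X and no type-Y square, so that on that diagram the two edge-assignment conditions coincide. You correctly flag this as the crux, but you do not prove it, and I do not see how to prove it --- indeed I doubt it is true in general. Type-X/Y squares occur whenever two arcs of a resolution interleave on a single circle, a pattern that is generically present and hard to avoid globally; if such diagrams always existed, the X/Y distinction would be rather vacuous, whereas the paper treats it as genuine. The proposed classification-plus-local-moves strategy is not worked out and could easily fail (the Reidemeister moves you introduce may create new X/Y squares elsewhere), and the fallback via a skein long exact sequence is only a suggestion, not an argument. So the key step carrying all the weight of your proof is missing. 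The fix is the paper's symmetry trick: instead of eliminating type-X/Y squares, pass to a globally reflected diagram where their roles are exchanged.
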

\begin{proof}
Let $n$ be the dimension of $\C$. Then for all squares $s\in\f(n,2)$ we have: If $\tr(\C,s)$ is of type A, K, X, Y then $\tr(\textup{r}(\textup{m}(\C)),s)$ is of type A, K, Y, X. So $\eps$ is of type Y with respect to $\textup{r}(\textup{m}(\C))$. One easily sees: The canonical isomorphism of the bigraded abelian groups $\textup{C}(\C)$ and $\textup{C}(\textup{r}(\textup{m}(\C)))$ is actually an isomorphism between the chain complexes $\textup{C}(\C,\eps)$ and $\textup{C}(\textup{r}(\textup{m}(\C)),\eps)$.

Let $\D_1$ be a link diagram so that $\overline{\C}$ is the 0-resolution of $\D_1$. Let $\D_2$ be the link diagram that we get from $\D_1$ if we reverse the orientation of $\s^2$ and change top and down for all crossings. Let $L$ be a link that is mapped to $\D_1$ by orthogonal projection. If we reverse the direction of the projection $L$ is mapped to $\D_2$. So $\D_1$ and $\D_2$ are diagrams of the same link and hence $\D_2$ differs from $\D_1$ by finitely many Reidemeister moves. Furthermore $\overline{\textup{m}(\C)}$ is the 0-resolution of $\D_2$. It follows that
\begin{multline*}
\textup{H}_{\textup{h},\delta}(\textup{C}(\C,\eps))\cong \textup{H}_{\textup{h},\delta}(\textup{C}(\textup{r}(\textup{m}(\C)),\eps))\cong \textup{H}_{\textup{h}-\textup{n}_-(\D_2),\delta+\textup{n}_+(\D_2)}(\textup{C}(\D_2,\textup{r}(\textup{m}(\C)),\eps))\\
\cong \textup{H}_{\textup{h}-\textup{n}_-(\D_1),\delta+\textup{n}_+(\D_1)}(\textup{C}(\D_1,\C,\etaup))\cong \textup{H}_{\textup{h},\delta}(\textup{C}(\C,\etaup)).\qedhere
\end{multline*}
\end{proof}
So we have proved Proposition \ref{xycor}. The reductions modulo 2 of the even and the odd Khovanov complex are the same \cite[Proof of Proposition 1.6]{ors}:
\begin{thm}\label{mod2th}
Let $\D$ be a link diagram, $\C$ an oriented configuration so that $\overline{\C}$ is the 0-resolution of $\D$, and $\eps$ an edge assignment of type X or Y with respect to $\C$. Then the bigraded chain complex $\textup{C}(\D,\C,\eps)\otimes_\z\z_2$ is isomorphic to the even Khovanov chain complex of $\D$ with $\z_2$ coefficients with $(\textup{h},\delta)$-grading.
\end{thm}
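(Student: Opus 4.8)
The claim is that $\textup{C}(\D,\C,\eps)\otimes_\z\z_2$ is isomorphic, as a bigraded chain complex, to the mod 2 even Khovanov complex of $\D$. Since the even Khovanov complex is defined via the same hypercube of resolutions, with generators the mod 2 monomials in the circles at each vertex (equivalently the mod 2 exterior algebra, which over $\z_2$ agrees with the symmetric algebra on the circles in each single degree, but one must be careful), the natural strategy is to construct an explicit $\z_2$-linear bijection on generators that intertwines the differentials. First I would recall the even Khovanov complex concretely: at a vertex $a$ one has $\V_{\z_2}(\tr(\C,a)) := \z_2\langle\text{circles}\rangle$ with the Frobenius algebra structure $\z_2[X]/(X^2)$ per circle (generators $1$ and $X$), merges sending $1\otimes 1\mapsto 1$, $1\otimes X, X\otimes 1\mapsto X$, $X\otimes X\mapsto 0$, and splits sending $1\mapsto 1\otimes X + X\otimes 1$, $X\mapsto X\otimes X$; the total even differential is $\sum_b \eps'(b)\,d_b$ for the even sign assignment $\eps'$. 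Compare this with Definition~\ref{nact}: the odd maps $\partial_\C$ on a join send $1\mapsto 1$, $x_i\mapsto y$, $x_1\wedge x_2\mapsto 0$, and on a split send $1\mapsto x_1-x_2$, $y\mapsto x_1\wedge x_2$. Reducing mod 2, $x_1-x_2$ becomes $x_1+x_2$, and one identifies the circle generator with $X$ and $1$ with $1$, matching the Frobenius merge and split exactly.

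The key steps in order are: (1) Fix, for each vertex $a$, the $\z_2$-linear isomorphism $\Lambda\V(\tr(\C,a))\otimes\z_2 \xrightarrow{\sim} A^{\otimes k_a}$ (with $A=\z_2[X]/X^2$, $k_a$ the number of circles) sending a wedge $z_{i_1}\wedge\cdots\wedge z_{i_m}$ to the tensor with $X$ in the slots $i_1,\dots,i_m$ and $1$ elsewhere — this is well-defined mod 2 precisely because over $\z_2$ the sign that would appear from reordering the wedge factors is $\pm 1 \equiv 1$, so the orientation/ordering data on the circles drops out. (2) Check on each single edge $b$ that this isomorphism carries $\partial_{\tr(\C,b)}$ (reduced mod 2) to the even edge map $d_b$: this is the join/split comparison above, done in the presence of passive-circle wedge factors $\omega$, which on both sides are just carried along. (3) Match the edge sign assignments: the odd $\eps$ satisfies the type-X or type-Y conditions of Definition~\ref{defxy}, and one must observe that \emph{mod 2} these conditions degenerate — $p(s,\eps)\equiv 1$ for \emph{all} squares $s$, since $\{-1,1\}$ collapses — so $\eps\bmod 2$ is the trivial sign assignment, and similarly the even sign assignment is trivial mod 2; hence both differentials are literally $\sum_b d_b$ with no signs, and they agree by step (2). (4) Finally, check the bigradings: the h-grading shift by $\textup{n}_-(\D)$ and $\delta$-shift by $\textup{n}_+(\D)$ in Definition~\ref{defkd} are defined to match the standard even conventions, and under the generator bijection $\Lambda^m$ corresponds to monomials of $X$-degree $m$, so $|\tr(\C,a)|-2m-|a|$ is exactly the even $\delta$-grading; this is a direct bookkeeping check.

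**Main obstacle.** The only genuinely non-formal point is step (1): verifying that the generator-level map is well-defined independent of the chosen ordering/orientation of the circles, i.e. that all the signs that distinguish the odd exterior-algebra picture from the even symmetric-algebra picture are exactly the ones that vanish mod 2. One should phrase this carefully: $\Lambda\V(\tr(\C,a))$ has a $\z$-basis indexed by subsets of circles once an ordering is fixed, and a change of ordering multiplies basis elements by $\pm 1$; mod 2 these become canonical, and the resulting $\z_2$-vector space with its induced multiplication is exactly $A^{\otimes k_a}$. I expect steps (2)–(4) to be short once (1) is set up, since they are the same join/split and grading computations that already appear in Definition~\ref{nact} and Definition~\ref{defkd}, now with all $-$ signs read as $+$. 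A clean way to organize the write-up is to invoke \cite[Proof of Proposition 1.6]{ors} for the statement itself and only supply the bigrading verification, but giving the explicit isomorphism as above makes the proof self-contained.
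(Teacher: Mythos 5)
Your proof is correct. The paper gives no argument of its own here; the sentence preceding the theorem simply cites \cite[Proof of Proposition 1.6]{ors}, and what you spell out --- the canonical mod 2 basis bijection $\Lambda\V(\tr(\C,a))\otimes_\z\z_2\cong(\z_2[X]/X^2)^{\otimes k}$, the join/split comparison with Definition~\ref{nact}, the observation that both $\pm 1$ sign assignments trivialize mod 2, and the $(\textup{h},\delta)$-bookkeeping against Definition~\ref{defkd} --- is exactly the argument that citation refers to, so the approaches coincide.
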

With $\textup{q}=\delta+2\textup{h}$ we get another grading for the even and for the odd Khovanov chain complex. The differential does not change the q-grading. If we build the Euler characteristic with respect to the h-grading of the even Khovanov homology of a link we get a Laurent polynomial through the q-grading. This is the unnormalized Jones polynomial \cite[Theorem 1]{barnat}. From Theorem~\ref{mod2th} it follows that this is also true for odd Khovanov homology.

\section{Construction of the spectral sequence}\label{sspec}
In this Section we construct our spectral sequence $\te(L)$ for a link $L$ (see Corollary \ref{maincor}). To do this we give an integral lift of Szab\'o's complex \cite{szabo}. First we need to fix some combinatorial notation.
\begin{defn}
For $k\le n\in\n_0$ a \emph{path of edges with length $k$} is a $k$-tuple $\theta=({}_1\theta,\dots,{}_k\theta)\in\left(\f(n,1)\right)^k$ with ${}_i\theta^1={}_{i+1}\theta^0$ for all $i\in\{1,\dots,k-1\}$. We say \emph{$\theta$ goes from ${}_1\theta^0$ to ${}_k\theta^1$}. For an edge assignment $\eps:\f(n,1)\to\{-1,1\}$ and a path of edges $\theta\in\left(\f(n,1)\right)^k$ we define $\eps(\theta)\defeq\prod_{i=1}^k\eps({}_i\theta)$.
\end{defn}
\begin{defn}\label{spdef}
For an $n$-dimensional configuration $\C$ and $a\in\f(n,0)$ let the \emph{split of $\C,a$} be
\[\tsp(\C,a)\defeq\frac{|\tr(\C,a)|-|C|+|a|}{2}.\]
For a path of edges $\theta\in\left(\f(n,1)\right)^k$ let the \emph{split of $\C,\theta$} be
\[\tsp(\C,\theta)\defeq\prod_{i=1}^{k-1}(-1)^{\tsp(\C,{}_i\theta^1)}.\]
\end{defn}
\begin{lem}
In the situation of Definition \ref{spdef} let $\theta$ be a path of edges from $(0,\dots,0)$ to $a$. Then we have $\textup{sp}(\C,a)=\#\left\{i\in\{1,\dots,|a|\}\ \ |\ \ \tr(\C,{}_i\theta)\textup{ is split}\right\}$.
\end{lem}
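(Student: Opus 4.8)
The plan is to induct on the length $k=|a|$ of the path $\theta$. For $k=0$ both sides are zero: the left side is $\tsp(\C,(0,\dots,0))=0$ by the definition in Definition \ref{spdef}, and the right side is the cardinality of an empty index set. For the inductive step, write $\theta=(\theta',{}_k\theta)$ where $\theta'=({}_1\theta,\dots,{}_{k-1}\theta)$ is a path from $(0,\dots,0)$ to ${}_k\theta^0={}_{k-1}\theta^1=:b$, and ${}_k\theta$ is a single edge from $b$ to $a$. The inductive hypothesis gives $\tsp(\C,b)=\#\{i\in\{1,\dots,k-1\}\mid \tr(\C,{}_i\theta)\text{ is split}\}$, so I need only show
\[\tsp(\C,a)=\tsp(\C,b)+\begin{cases}1&\text{if }\tr(\C,{}_k\theta)\text{ is split,}\\0&\text{if }\tr(\C,{}_k\theta)\text{ is a join.}\end{cases}\]
By definition $\tsp(\C,a)-\tsp(\C,b)=\tfrac{1}{2}\bigl(|\tr(\C,a)|-|\tr(\C,b)|+|a|-|b|\bigr)$, and since ${}_k\theta$ is an edge we have $|a|-|b|=1$. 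So the claim reduces to the local statement: passing along a single edge, the number of circles goes up by one (net change $+2$ in $|\tr(\C,a)|-|\tr(\C,b)|+1$) exactly when the one-dimensional resolution $\tr(\C,{}_k\theta)$ at that edge is a split, and stays the same (net change $0$) when it is a join.

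The main work, then, is this single-edge computation. The key observation is that $\tr(\C,a)$ and $\tr(\C,b)$ differ only through the one arc indexed by the $*$-coordinate of ${}_k\theta$: all other arcs are resolved identically (either deleted the same way, or deleted after the $*\mapsto 1$ rotation of Figure \ref{cstar}). Restricting attention to that arc and the circle(s) it meets, we are exactly in the situation of the active one-dimensional configuration $\tr(\C,{}_k\theta)$ with its passive circles. If $\tr(\C,{}_k\theta)$ is a join, the arc has both endpoints on the same circle (in $\overline{\C}$-terms: $\tr(\C,b)$ is a join picture of Figure \ref{1dim}), and surgering along it — which is what the $0\mapsto 1$ resolution does — splits that circle into two; but wait, I should be careful with conventions. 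The cleanest route is: a one-dimensional active configuration is either a join (two circles joined by the arc) or a split (one circle with the arc); the $0$-resolution is the configuration itself and the $1$-resolution is the dual, and the dual of a join is a split and vice versa. So if $\tr(\C,{}_k\theta)$ is a join, then $\tr(\C,b)$ restricted to the active part has two circles and $\tr(\C,a)$ has one, giving net change $-1$ in circle count, hence $|\tr(\C,a)|-|\tr(\C,b)|+1=0$; if it is a split, the active part goes from one circle to two, net change $+1$, hence $|\tr(\C,a)|-|\tr(\C,b)|+1=2$. In both cases dividing by $2$ gives the asserted increment. The passive circles of $\tr(\C,{}_k\theta)$ contribute equally to both counts and cancel.

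The step I expect to be the genuine obstacle — or at least the one requiring care — is pinning down the sign/direction conventions so that "split" in the sense of Definition \ref{nact} (one active circle, resolved to two) lines up with the circle count increasing, given that the edge goes from the $0$-smoothing ${}_k\theta^0=b$ to the $1$-smoothing ${}_k\theta^1=a$ and that $\tr(\C,{}_k\theta)$ is by definition the configuration at ${}_k\theta$ before deleting the last arc. One must also check that this single-arc local analysis is legitimate, i.e. that the diffeomorphism type of $\tr(\C,a)$ near the other circles is genuinely unaffected by whether the $i$-th arc (for $i\ne$ the $*$-index) was present — this is immediate from the definition of $\tr$, since resolving an arc is a purely local modification, but it deserves an explicit sentence. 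Once the conventions are fixed, the induction closes with no further calculation.
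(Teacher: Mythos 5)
Your proof is correct and takes the same route as the paper, which simply records ``induction on $|a|$'' without further detail; your inductive step, reducing the increment of $\tsp$ along a single edge to the observation that a join's dual has one fewer circle and a split's dual one more, is exactly the computation that induction requires, and your handling of passive circles and of the $0$-resolution/$1$-resolution convention is accurate.
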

\begin{proof}
This is proved by induction on $|a|$.
\end{proof}
\begin{defn}
For $a\in\f(n,1)$ let $\overline{a}\in\{1,\dots,n\}$ be the position of * in $a$.
\end{defn}
\begin{defn}
For $n\in\n_0$ we define the bijection
\begin{align*}
\phi:\left\{\theta\in\left(\f(n,1)\right)^n\ \ |\ \ \theta\textup{ is a path of edges}\right\}&\to\textup{S}_n,\\
\theta&\mapsto(i\mapsto\overline{{}_i\theta}),
\end{align*}
where $\textup{S}_n$ stands for the symmetric group. We have $\phi(\theta)\in\textup{S}_n$ because $\theta$ goes from $(0,\dots,0)$ to $(1,\dots,1)$.
\end{defn}
\begin{defn}
For an arc $\gammaup$ in an oriented configuration let $\gammaup^0$ and $\gammaup^1$ be the starting point and the end point of $\gammaup$.
\end{defn}
Recall that the odd Khovanov differential raises the h-grading by 1. Now we want to define higher differentials on the odd Khovanov chain groups that raise the h-grading by 1 or more. So we assign maps not only to the edges but to all faces of the hypercube of resolutions. While for the odd Khovanov differential we defined maps $\partial_{\C}$ for one-dimensional configurations $\C$, we now define maps for some configurations with higher dimension. To do this we define types for an active oriented configuration $\C$ in table \ref{ttypes}.
\begin{table}[htbp]
\centering
\begin{tabular}{c|p{11.5cm}}
Type&\multicolumn{1}{c}{Condition}\\
\hline
$\ta_n$&We require $n\in\n$. The configuration $\C$ has exactly 2 circles $x_1$ and $x_2$ and exactly $n$ arcs, where all arcs point from $x_1$ to $x_2$.\\
$\tb_n$&We require $n\in\n$. The configuration $\C$ has exactly $n$ circles $x_0,\dots,x_{n-1}$ and exactly $n$ arcs $\gammaup_0,\dots,\gammaup_{n-1}$, where $\gammaup_i$ points from $x_{(i-1)\textup{ mod }n}$ to $x_i$.\\
$\tc_{p,q}$&We require $p\le q\in\n$. The configuration $\C$ has exactly 1 circle $x$ and we can choose an orientation on $x$ as clockwise direction, which determines interior and exterior of $x$, so that the following holds: Our $\C$ has exactly $p$ arcs $\gammaup_1,\dots,\gammaup_p$ in the interior and exactly $q$ arcs $\delta_1,\dots,\delta_q$ in the exterior. If we start at $\gammaup_1^0$ and go along $x$ in clockwise direction we reach the boundary points of the arcs in the order $\gammaup_1^0,\dots,\gammaup_p^0,\delta_1^0,\dots,\delta_q^0,\gammaup_p^1,\dots
\gammaup_1^1,\delta_q^1,\dots\delta_1^1$.\\
$\tde_{p,q}$&We require $p\le q\in\n$. There is exactly one circle $z$ of $\C$ on which lie two starting points and two end points of arcs. We can choose an orientation on $z$ as clockwise direction, so that in the interior of $z$ lie the circles $x_1,\dots,x_{p-1}$ and in the exterior of $z$ lie the circles $y_1,\dots,y_{q-1}$. The arcs of $\C$ are $\gammaup_1,\dots,\gammaup_p,\delta_1,\dots,\delta_q$ and $\gammaup_i$ points from $x_{i-1}$ to $x_i$ for $i\in\{2,\dots,p-1\}$ while $\gammaup_1$ points from $z$ to $x_1$ and $\gammaup_p$ points from $x_{p-1}$ to $z$. Furthermore $\delta_i$ points from $y_{i-1}$ to $y_i$ for $i\in\{2,\dots,q-1\}$ while $\delta_1$ points from $z$ to $y_1$ and $\delta_q$ points from $y_{q-1}$ to $z$. If we go along $z$ starting at $\gammaup_1^0$ we reach in clockwise direction $\gammaup_1^0,\delta_1^0,\gammaup_p^1,\delta_q^1$.\\
$\tf_{p,q}$&We require $p,q\in\n_0,\ p+q\ge 1$. There is exactly one circle $y$ in $\C$ so that all arcs that start on $y$ also end on $y$. These arcs are called $\delta_1,\dots,\delta_q$. If we choose an orientation on $y$ as clockwise direction, then for all $\delta_i$ holds: If $\delta_i$ is in the interior of $y$ and we go along $y$ in clockwise direction starting at $\delta_i^0$ the first boundary point of an arc that we reach is $\delta_i^1$. If $\delta_i$ is in the exterior of $y$ the analogous statement holds with counter-clockwise instead of clockwise. Despite from the $y$-circle $\C$ has the circles $x_1,\dots,x_p$ and despite from the $\delta_i$-arcs $\C$ has the arcs $\gammaup_1,\dots,\gammaup_p$, where $\gammaup_i$ points from $x_i$ to $y$.\\
$\tg_{p,q}$&We require $p,q\in\n_0,\ p+q\ge 1$. The configuration $\textup{r}(\C)$ is of type $\tf_{p,q}$.\\
\hline
\end{tabular}
\caption{Types for an active oriented configuration $\C$}
\label{ttypes}
\end{table}
Our types are the same as the types of Szab\'o \cite[Section 4]{szabo}, except of Szab\'o's type $\te_{p,q}$, which is split into our types $\tf_{p,q}$ and $\tg_{p,q}$. Examples for the types are given by Szab\'o \cite[Figure 3]{szabo}. In Szab\'o's figure, the configuration in the middle of the lower row is of our type $\tf_{6,5}$ and the configuration in the right of the lower row is of our type $\tg_{3,4}$.
\begin{defn}
Define
\[\ttt\defeq\left\{\textup{A}_n,\ \textup{B}_n,\ \textup{C}_{p,q},\ \textup{D}_{p,q},\ \textup{F}_{r,s},\ \textup{G}_{r,s}\ \ |\ \ n,p,q\in\mathbb{N}\ ,p\le q,\ r,s\in\mathbb{N}_0,\ r+s\ge 1\right\}.\]
\end{defn}
If $\C$ is disconnected there is no $\tau\in\ttt$, so that $\C$ is of type $\tau$. If $\C$ is at least 3-dimensional there is at most one $\tau\in\ttt$ so that $\C$ is of type $\tau$. If $\C$ is 2-dimensional and of type Y (see Figure \ref{fxy}) then $\C$ is of type $\tc_{1,1}$ and $\tde_{1,1}$. If $\C$ is 2-dimensional and not of type Y then there is at most one $\tit$ so that $\C$ is of type $\tau$. If $\C$ is 1-dimensional there are two possibilities (see Figure \ref{1dim}). If $\C$ is a join, then it is of type $\ta_1$, $\tf_{1,0}$ and $\tg_{1,0}$. If $\C$ is split, then it is of type $\tb_1$, $\tf_{0,1}$ and $\tg_{0,1}$. For every $\tit$ there exists a $\C$ of type $\tau$. For $\tau=\ta_n,\tb_n,\tc_{p,q},\tde_{p,q}$ there exists exactly one $\C$ of type $\tau$. The configuration $\C$ is of type $\ta_n$, $\tc_{p,q}$, $\tf_{p,q}$ if and only if $\textup{m}(\C^*)$ is of type $\tb_n$, $\tde_{p,q}$, $\tg_{q,p}$. So for $\tit$ we define
\[\textup{m}(\tau^*)\defeq\begin{cases}
\textup{B}_n\qquad&\textup{for $\tau=\textup{A}_n$,}\\
\textup{A}_n\qquad&\textup{for $\tau=\textup{B}_n$,}\\
\textup{D}_{p,q}\qquad&\textup{for $\tau=\textup{C}_{p,q}$,}\\
\textup{C}_{p,q}\qquad&\textup{for $\tau=\textup{D}_{p,q}$,}\\
\textup{G}_{q,p}\qquad&\textup{for $\tau=\textup{F}_{p,q}$,}\\
\textup{F}_{q,p}\qquad&\textup{for $\tau=\textup{G}_{p,q}$.}
\end{cases}\]
For $\C$ an active oriented $n$-dimensional configuration of type $\tit$ and $\eps$ an edge assignment of type Y respective $\C$ we want to define a $\z$-module-homomorphism
\[\td_{\C,\tau,\eps}:\Lambda\V(\C)\to\Lambda\V(\C^*).\]
On the free abelian group $\Lambda\V(\C)$ there is, up to sign, a canonical basis. Exactly one of these basis elements should be mapped non-trivially. We will describe in the following which element this is and how it is mapped. For this we choose an \emph{allowed} path of edges $\theta$: For $\tau\ne\tde_{p,q}$ every path of edges with length $n$ (and with edges in $\f(n,1)$) is allowed, for $\tau=\tde_{p,q}$ we will describe soon which paths of edges are allowed. Then we define $x_{\C,\tau,\theta}\in\Lambda\V(\C)$ and $y_{\C,\tau}\in\Lambda\V(\C^*)$ and set
\[\td_{\C,\tau,\eps}(x_{\C,\tau,\theta})=\eps(\theta)\tsp(\C,\theta)y_{\C,\tau}.\]
We will show that this is independent of the choice of $\theta$. So let us now look at each type separately. Recall that our whole construction is an integral lift of Szab\'o's construction \cite[Section 4]{szabo}.
\begin{enumerate}
\item[$\tau=\ta_n$:]
Then we set
\begin{align*}
x_{\C,\textup{A}_n,\theta}&=1,\\
y_{\C,\textup{A}_n}&=(-1)^{n+1}.
\end{align*}
Now we show that $\td_{\C,\ta_n,\eps}$ is independent of $\theta$: So let $\zetaup$ be another path of edges with length $n$. We only have to consider the case where $\phi(\zetaup)=\phi(\theta)\circ{(i\ i+1)}$ for some $i\in\{1,\dots,n-1\}$ because $\textup{S}_n$ is generated by such transpositions. In this case $({}_1\zetaup,\dots,{}_n\zetaup)$ and $({}_1\theta,\dots,{}_n\theta)$ differ only in the $i$-th and $(i+1)$-th edge. Let $s\in\f(n,2)$ be defined as follows:
\[s_j\defeq\begin{cases}
*\qquad&\textup{for $j={}_i\overline{\theta}={}_{i+1}\overline{\zetaup}$ or $j={}_{i+1}\overline{\theta}={}_{i}\overline{\zetaup}$,}\\
{}_i\zetaup_j={}_{i+1}\zetaup_j={}_i\theta_j={}_{i+1}\theta_j\qquad&\textup{otherwise.}
\end{cases}\]
Then we have $\eps(\theta)\eps(\zetaup)=1$, if $\tr(\C,s)$ is of type A or Y, and we have ${\eps(\theta)\eps(\zetaup)=-1}$, if $\tr(\C,s)$ is of type K or X, because $\eps$ is an edge assignment of type Y. Since $\C$ is of type $\ta_n$ we have the following possibilities for $\tr(\C,s)$:
\begin{enumerate}
\item[-]
$\tr(\C,s)$ has two circles and is disconnected.
\item[-]
1. in Figure \ref{f2}
\item[-]
2. in Figure \ref{f2}
\end{enumerate}
It follows that $\eps(\theta)=\eps(\zetaup)$ and $\tsp(\C,\theta)=\tsp(\C,\zetaup)$.
\item[$\tau=\tb_n$:]
Let $\gammaup_1,\dots,\gammaup_n$ be the given numbering on the arcs of $\C$. The circles of $\C$ are named $x_1,\dots,x_n$ so that $\gammaup_i^1$ lies on $x_i$. The configuration $\C^*$ is of type~$\ta_n$. The circles of $\ta_n$ are named $y_1$, $y_2$ so that all arcs point from $y_1$ to $y_2$. Furthermore let $\theta$ be a path of edges of length $n$. Then we set
\begin{align*}
x_{\C,\textup{B}_n,\theta}&=x_{\phi(\theta)(1)}\wedge\cdots\wedge x_{\phi(\theta)(n)},\\
y_{\C,\textup{B}_n}&=y_1\wedge y_2.
\end{align*}
Let $\zetaup$ fulfill $\phi(\zetaup)=\phi(\theta)\circ{(i\ i+1)}$ for some $\in\{1,\dots,n-1\}$. We have $x_{\C,\textup{B}_n,\theta}=-x_{\C,\textup{B}_n,\zetaup}$. For $\tr(\C,s)$ there are the following possibilities:
\begin{enumerate}
\item[-]
$\tr(\C,s)$ has four circles.
\item[-]
$\tr(\C,s)$ has three circles and is connected.
\item[-]
1. in Figure \ref{f1}
\end{enumerate}
It follows that $\eps(\theta)=-\eps(\zetaup)$ and $\tsp(\C,\theta)=\tsp(\C,\zetaup)$.
\item[$\tau=\tc_{p,q}$:]
Then we set
\begin{align*}
x_{\C,\textup{C}_{p,q},\theta}&=1,\\
y_{\C,\textup{C}_{p,q}}&=(-1)^n.
\end{align*}
Let $\zetaup$ fulfill $\phi(\zetaup)=\phi(\theta)\circ{(i\ i+1)}$ for some $\in\{1,\dots,n-1\}$. For $\tr(\C,s)$ there are the following possibilities:
\begin{enumerate}
\item[-]
$\tr(\C,s)$ has three circles and is disconnected.
\item[-]
2. in Figure \ref{f1}
\item[-]
$\tr(\C,s)$ has two circles and is disconnected.
\item[-]
One of the configurations in Figure \ref{f2}
\item[-]
$\tr(\C,s)$ has type Y.
\end{enumerate}
In the first two cases we have $\eps(\theta)=-\eps(\zetaup)$ and $\tsp(\C,\theta)=-\tsp(\C,\zetaup)$. In the last three cases we have $\eps(\theta)=\eps(\zetaup)$ and $\tsp(\C,\theta)=\tsp(\C,\zetaup)$.
\item[$\tau=\tde_{p,q}$:]
Let $x$ be the circle of $\C$ on which lie two starting points and two end points of arcs. Let $\gammaup_1,\dots,\gammaup_n$ be the given numbering on the arcs of $\C$. Let $\gammaup_a$, $\gammaup_b$ be the two arcs that point to $x$. For $i\in\{1,\dots,n\}\setminus\{a,b\}$ let $x_i$ be the circle to which $\gammaup_i$ points. The circle of $C^*$ is called $y$. A path of edges $\theta$ with length $n$ (and with edges in $\f(n,1)$) is called \emph{allowed} if $\{\phi(\theta)(n-1),\ \phi(\theta)(n)\}=\{a,\ b\}$. Then we set
\begin{align*}
x_{\C,\textup{D}_{p,q},\theta}&=x_{\phi(\theta)(1)}\wedge\cdots\wedge x_{\phi(\theta)(n-2)}\wedge x,\\
y_{\C,\textup{D}_{p,q}}&=y.
\end{align*}
There are two cases to consider. First let $\zetaup$ fulfill $\phi(\zetaup)=\phi(\theta)\circ{(n-1\ n)}$. Then $\tr(\C,s)$ is of type Y. It follows that $\eps(\theta)=\eps(\zetaup)$ and $\tsp(\C,\theta)=\tsp(\C,\zetaup)$. Let $\zetaup$ now fulfill $\phi(\zetaup)=\phi(\theta)\circ{(i\ i+1)}$ for some $i\in\{1,\dots,n-3\}$. Then we have $x_{\C,\textup{D}_{p,q},\theta}=-x_{\C,\textup{D}_{p,q},\zetaup}$. For $\tr(\C,s)$ there are the following possibilities:
\begin{enumerate}
\item[-]
$\tr(\C,s)$ has four circles.
\item[-]
$\tr(\C,s)$ has three circles and is connected.
\end{enumerate}
It follows that $\eps(\theta)=-\eps(\zetaup)$ and $\tsp(\C,\theta)=\tsp(\C,\zetaup)$.
\item[$\tau=\tf_{p,q}$:]
Let $\gammaup_1,\dots,\gammaup_n$ be the given numbering on the arcs of $\C$. For $i\in\{1,\dots,n\}$ set $x_i\defeq 1\in\Lambda\V(\C)$ if $\gammaup_i^0$ and $\gammaup_i^1$ lie on the same circle. Otherwise let $x_i$ be the circle on which $\gammaup_i^0$ lies. In $\C^*$ there is exactly one circle $y$ such that all arcs which end on $y$ also start on $y$. Then we set
\begin{align*}
x_{\C,\textup{F}_{p,q},\theta}&=x_{\phi(\theta)(1)}\wedge\cdots\wedge x_{\phi(\theta)(n)},\\
y_{\C,\textup{F}_{p,q}}&=y.
\end{align*}
Let $\zetaup$ fulfill $\phi(\zetaup)=\phi(\theta)\circ{(i\ i+1)}$ for some $i\in\{1,\dots,n-1\}$. First look at the case where $x_{\phi(\theta)(i)}=1$ or $x_{\phi(\theta)(i+1)}=1$. Then we have $x_{\C,\textup{F}_{p,q},\theta}=x_{\C,\textup{F}_{p,q},\zetaup}$. For $\tr(\C,s)$ there are the following possibilities:
\begin{enumerate}
\item[-]
2. in Figure \ref{f1}
\item[-]
3. in Figure \ref{f1}
\item[-]
2. in Figure \ref{f2}
\item[-]
3. in Figure \ref{f2}
\end{enumerate}
In the first two cases we have $\eps(\theta)=-\eps(\zetaup)$ and $\tsp(\C,\theta)=-\tsp(\C,\zetaup)$. In the last two cases we have $\eps(\theta)=\eps(\zetaup)$ and $\tsp(\C,\theta)=\tsp(\C,\zetaup)$. Now look at the case where $x_{\phi(\theta)(i)}\ne 1\ne x_{\phi(\theta)(i+1)}$. Then we have $x_{\C,\textup{F}_{p,q},\theta}=-x_{\C,\textup{F}_{p,q},\zetaup}$ and $\tr(\C,s)$ is connected and has three circles. It follows that $\eps(\theta)=-\eps(\zetaup)$ and $\tsp(\C,\theta)=\tsp(\C,\zetaup)$.
\item[$\tau=\tg_{p,q}$:]
Let $\gammaup_1,\dots,\gammaup_n$ be the given numbering on the arcs of $\C$. For $i\in\{1,\dots,n\}$ set $x_i\defeq 1\in\Lambda\V(\C)$ if $\gammaup_i^0$ and $\gammaup_i^1$ lie on the same circle. Otherwise let $x_i$ be the circle on which $\gammaup_i^1$ lies. In $\C^*$ there is exactly one circle $y$ such that all arcs which start on $y$ also end on $y$. Then we set
\begin{align*}
x_{\C,\textup{G}_{p,q},\theta}&=x_{\phi(\theta)(1)}\wedge\cdots\wedge x_{\phi(\theta)(n)},\\
y_{\C,\textup{G}_{p,q}}&=(-1)^{p+1}y.
\end{align*}
That $\td_{\C,\tg_{p,q},\eps}$ is independent of $\theta$ can be shown analogously to the case $\tau=\tf_{p,q}$.
\end{enumerate}
Directly from the definition we see:
\begin{lem}[grading rule]
For $x\in\Lambda^m\V(\C)$ we write $\gr(x)=m$. Then for every allowed path of edges $\theta$ we have
\[\textup{gr}(y_{\C,\tau})-\textup{gr}(x_{\C,\tau,\theta})=\frac{|\C^*|-|\C|-n}{2}+1
=\textup{sp}(\C,(1,\dots,1))-n+1.\]
\end{lem}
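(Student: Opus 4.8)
The plan is to verify the two equalities directly from the definitions: the rightmost one is a one-line rearrangement, and the other one is a short case check over the six types of Table \ref{ttypes}.

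For the rightmost equality, observe that $\tr(\C,(1,\dots,1))$ is obtained from $\C$ by rotating all $n$ arcs as in Figure \ref{cstar} and then deleting them, so it has exactly the circles of $\C^*$; hence $|\tr(\C,(1,\dots,1))|=|\C^*|$. Putting $a=(1,\dots,1)$ in Definition \ref{spdef} gives $\tsp(\C,(1,\dots,1))=\tfrac12\bigl(|\C^*|-|\C|+n\bigr)$, and therefore $\tfrac12\bigl(|\C^*|-|\C|-n\bigr)+1=\tsp(\C,(1,\dots,1))-n+1$. Nothing about the type of $\C$ enters here.

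For the first equality I would run through $\tau=\ta_n,\tb_n,\tc_{p,q},\tde_{p,q},\tf_{p,q},\tg_{p,q}$ in turn, in each case reading off the dimension $n$, the circle numbers $|\C|$ and $|\C^*|$, and the exterior degrees $\gr(y_{\C,\tau})$ and $\gr(x_{\C,\tau,\theta})$. The number $|\C|$ is immediate from Table \ref{ttypes}: it is $2,\ n,\ 1,\ p+q-1,\ p+1,\ p+1$ respectively. For $|\C^*|$ I would use that $\textup{m}$ and $\textup{r}$ do not change the number of circles, together with the correspondence $\tau\mapsto\textup{m}(\tau^*)$ recorded just before the definition of $\td_{\C,\tau,\eps}$; this identifies $|\C^*|$ with the circle number of a configuration of type $\tb_n,\ta_n,\tde_{p,q},\tc_{p,q},\tg_{q,p},\tf_{q,p}$, that is, $n,\ 2,\ p+q-1,\ 1,\ q+1,\ q+1$. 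The degree $\gr(y_{\C,\tau})$ is read straight off the formulas for $y_{\C,\tau}$ and equals $0,\ 2,\ 0,\ 1,\ 1,\ 1$. Finally $\gr(x_{\C,\tau,\theta})$ is the number of wedge factors of $x_{\C,\tau,\theta}$ different from $1\in\Lambda\V(\C)$; since $\phi(\theta)$ ranges over all of $\textup{S}_n$, reordering the factors of a wedge product only changes a sign, so this degree is independent of $\theta$. It is $0$ for $\ta_n$ and $\tc_{p,q}$; it is $n$ for $\tb_n$ and $n-1$ for $\tde_{p,q}$, where $x_{\C,\tde_{p,q},\theta}$ is, up to sign, the top exterior power of $\V(\C)$ because the only two arcs whose endpoints both lie on the distinguished circle $x$ are exactly the two suppressed by the allowedness condition $\{\phi(\theta)(n-1),\phi(\theta)(n)\}=\{a,b\}$; and it is $p$ for $\tf_{p,q}$ and $\tg_{p,q}$, since precisely the $q$ arcs $\delta_j$ have both endpoints on a single circle (contributing factors $1$) while the $p$ arcs $\gammaup_i$ join distinct circles. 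Feeding these values into $\tfrac12(|\C^*|-|\C|-n)+1$ and into $\gr(y_{\C,\tau})-\gr(x_{\C,\tau,\theta})$ produces, in both cases, the common value $0$ for $\ta_n$ and $\tc_{p,q}$, the value $2-n$ for $\tb_n$, the value $2-p-q$ for $\tde_{p,q}$, and the value $1-p$ for $\tf_{p,q}$ and $\tg_{p,q}$, which settles the identity.

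I do not expect any genuine obstacle: the computation is finite and elementary, matching the ``directly from the definition'' remark preceding the statement. The only two places calling for care are deciding which wedge factors of $x_{\C,\tau,\theta}$ are trivial — the loop arcs $\delta_j$ on the distinguished circle in types $\tf_{p,q}$ and $\tg_{p,q}$, and the two arcs removed from the wedge by the allowedness convention in type $\tde_{p,q}$ — and reading the circle count of $\C^*$ off the table $\tau\mapsto\textup{m}(\tau^*)$ rather than recomputing it by resolving arcs by hand.
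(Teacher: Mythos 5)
Your proof is correct and matches what the paper intends by ``directly from the definition'': a one-line rearrangement for the rightmost equality and a finite type-by-type check for the first, with the circle counts, exterior degrees, and arithmetic all coming out as you state. One small imprecision: in type $\tde_{p,q}$ the indices $a,b$ singled out by the allowedness condition are those of the two arcs that \emph{point to} the distinguished circle (not ``whose endpoints both lie on'' it), but this does not affect the count $\gr(x_{\C,\tde_{p,q},\theta})=n-1$.
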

Next we want to define $\td_{\C,\tau,\eps}$ for the case when $\C$ is not necessarily active. For $m\in\n_0$ and arbitrarily chosen passive circles $z_1,\dots,z_m$ of $\C$ set $\omega\defeq z_1\wedge\cdots\wedge z_m$. Furthermore let $\theta$ be a path of edges that is allowed for $\textup{act}(\C)$. Then we call $\theta$ \emph{allowed for $\C$}. Now let
\[\td_{\C,\tau,\eps}(x_{\textup{act}(\C),\tau,\theta}\wedge\omega)
=\eps(\theta)\textup{sp}(\C,\theta)y_{\textup{act}(\C),\tau}\wedge\omega.\]
All canonical basis elements of $\Lambda\V(\C)$ that cannot be written in the form $x_{\textup{act}(\C),\tau,\theta}\wedge\omega$, with $\omega$ as described, are mapped trivially.
\begin{defn}
We set
\[\td_{\C,\eps}\defeq\sum_{\tau\in\textup{T},\textup{ $\C$ is of type $\tau$}}\td_{\C,\tau,\eps}.\]
\end{defn}
One can easily see:
\begin{lem}\label{lemd1}
If $\C$ is 1-dimensional, then $\td_{\C,\eps}$ equals the odd Khovanov differential $\partial(\C,\eps)=\eps(*)\partial_\C$.
\end{lem}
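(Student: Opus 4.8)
The plan is to verify the claim separately in the two cases for a one-dimensional configuration $\C$, namely when $\C$ is a join and when $\C$ is a split, and check that in each case the sum $\td_{\C,\eps}$ collapses to exactly the odd Khovanov differential $\eps(*)\partial_\C$. Recall from the discussion after the definition of $\ttt$ that if $\C$ is a join it is of type $\ta_1$, $\tf_{1,0}$, and $\tg_{1,0}$, and no other type; if $\C$ is a split it is of type $\tb_1$, $\tf_{0,1}$, and $\tg_{0,1}$, and no other type. So in each case $\td_{\C,\eps}$ is a sum of exactly three terms, and I would compute each term explicitly from the definitions of $x_{\C,\tau,\theta}$, $y_{\C,\tau}$, and the normalization $\td_{\C,\tau,\eps}(x_{\C,\tau,\theta}\wedge\omega)=\eps(\theta)\tsp(\C,\theta)\,y_{\C,\tau}\wedge\omega$. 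Here $n=1$, the unique path of edges $\theta$ has length $1$, so $\tsp(\C,\theta)$ is an empty product equal to $1$, and $\eps(\theta)=\eps(*)$.

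For the join case: for $\tau=\ta_1$ we have $x_{\C,\ta_1,\theta}=1$ and $y_{\C,\ta_1}=(-1)^{1+1}=1$, so $\td_{\C,\ta_1,\eps}$ sends $1\mapsto\eps(*)\cdot 1$, matches $\eps(*)\partial_\C(1)=\eps(*)$, and kills all other canonical basis elements — in particular $x_1\wedge x_2\mapsto 0$, consistent with $\partial_\C(x_1\wedge x_2)=0$. For $\tau=\tf_{1,0}$: here $p=1$, the single arc $\gammaup_1$ has endpoints on different circles $x_1,x_2$, so $x_1$ (the ``$x_1$'' of the $\tf$ recipe, which is the circle on which $\gammaup_1^0$ lies) is one of the two starting circles; thus $x_{\C,\tf_{1,0},\theta}$ is that starting circle and $y_{\C,\tf_{1,0}}=y$, giving the map (starting circle) $\mapsto\eps(*)\,y$, i.e.\ exactly one of $\partial_\C(x_1)=y$ or $\partial_\C(x_2)=y$. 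For $\tau=\tg_{1,0}$: $\textup{r}(\C)$ is of type $\tf_{1,0}$; the recipe uses the circle on which $\gammaup_1^1$ lies, which is the \emph{other} starting circle, and $y_{\C,\tg_{1,0}}=(-1)^{1+1}y=y$, so this term supplies the remaining map (other starting circle) $\mapsto\eps(*)\,y$. Summing the three, the composite map is exactly $\eps(*)\partial_\C$. The split case is dual and I would run the same bookkeeping: $\tb_1$ gives $y\mapsto\eps(*)\,x_1\wedge x_2$ (matching $\partial_\C(y)=x_1\wedge x_2$ after checking the naming of circles and the sign $\tsp(\C,\theta)=1$), while $\tf_{0,1}$ and $\tg_{0,1}$ between them produce $1\mapsto\eps(*)(x_1-x_2)$; here one must pay attention to the $(-1)^{p+1}$ normalization in $y_{\C,\tg_{0,1}}$ with $p=0$, which yields the relative minus sign needed to get $x_1-x_2$ rather than $x_1+x_2$, and to the orientation conventions (clockwise direction, interior vs.\ exterior arc) that determine which of $\pm x_1,\pm x_2$ each of $\tf$ and $\tg$ produces.

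The only real subtlety — and the step I expect to be the main obstacle — is matching signs and labels correctly: one must check that the circle called ``$x_1$'' in the $\tf$ and $\tg$ recipes corresponds to the circle called ``$x_1$'' in the definition of $\partial_\C$ (i.e.\ that the arc-endpoint conventions are compatible), and that the factors $(-1)^{n+1}$, $(-1)^n$, $(-1)^{p+1}$ and $\tsp(\C,\theta)$ all come out as claimed with $n=1$. Since all of this is a finite, explicit check with no choices left open once $\C$ is fixed up to diffeomorphism, the verification is routine; this is why the statement is phrased as ``one can easily see.'' I would therefore present the proof as: enumerate the at-most-three types occurring for each of the two $1$-dimensional configurations, write down $x_{\C,\tau,\theta}$ and $y_{\C,\tau}$ for each, observe $n=1$ forces $\tsp(\C,\theta)=1$ and $\eps(\theta)=\eps(*)$, and read off that the sum of the three $\td_{\C,\tau,\eps}$ equals $\eps(*)\partial_\C$ on every canonical basis element, with all unlisted basis elements mapped to zero on both sides.
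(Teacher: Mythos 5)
Your proposal is correct and is precisely the check the paper has in mind; the paper gives no written proof here (it merely says ``One can easily see''), and your enumeration of the three types occurring for a join ($\ta_1$, $\tf_{1,0}$, $\tg_{1,0}$) and for a split ($\tb_1$, $\tf_{0,1}$, $\tg_{0,1}$), together with the observation that $n=1$ forces $\tsp(\C,\theta)=1$ and $\eps(\theta)=\eps(*)$, and the careful use of the sign $(-1)^{p+1}$ in $y_{\C,\tg_{p,q}}$ to produce the relative minus in $x_1-x_2$, is exactly the finite verification that the lemma is summarizing.
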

Before we can define the higher differentials on the odd Khovanov chain groups we have to describe what the restriction of an edge assignment should be.
\begin{defn}
Let $\eps:\f(n,1)\to\{-1,1\}$ be an edge assignment. For all $a\in\f(n,k)$, $1\le k\le n$, our $\eps$ induces an edge assignment $\eps|_a:\f(k,1)\to\{-1,1\}$ as follows: Let $i_1<i_2<\dots<i_k$ be so that $a_{i_j}=*$ for all $j\in\{1,\dots,k\}$. For $\lambda\in\f(k,1)$ let $\mu\in\f(n,1)$ with $\mu_l\defeq a_l$ if $a_l\ne*$ and $\mu_{i_j}\defeq\lambda_j$ for all $j\in\{1,\dots,k\}$. Then let $\eps|_a(\lambda)\defeq\eps(\mu)$.
\end{defn}
Now let $\C$ be an $n$-dimensional oriented configuration, $\eps$ an edge assignment of type Y with respect to $\C$ and let $k\in\{1,\dots,n\}$. For all $a\in\f(n,k)$ we get the map
\[\td_{\tr(\C,a),\eps|_a}:\Lambda\underbrace{\V(\tr(\C,a))}_{=\V(\tr(\C,a^0))}\to
\Lambda\underbrace{\V((\tr(\C,a))^*)}_{=\V(\tr(\C,a^1))}.\]
This allows us to define the higher differentials:
\begin{defn}
Set
\[\td_k(\C,\eps)\defeq\bigoplus_{a\in\f(n,k)}
(-1)^{|a^0|+(k+1)\textup{sp}(\C,a^0)}
\td_{\tr(\C,a),\eps|_a}:\textup{C}(\C)\to\textup{C}(\C)\]
and $\td(\C,\eps)\defeq\sum_{1\le k\le n}\td_k(\C,\eps)$.
\end{defn}
From the grading rule we get:
\begin{cor}\label{corgr}
The map $\td_k(\C,\eps)$ raises the h-grading by $k$ and decreases the $\delta$-grading by 2.
\end{cor}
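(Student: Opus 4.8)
The plan is to reduce both claims to the grading rule stated just above, after noting that neither the scalar $(-1)^{|a^0|+(k+1)\tsp(\C,a^0)}$ nor the scalar $\eps(\theta)\tsp(\tr(\C,a),\theta)$ occurring in the definition of $\td_k(\C,\eps)$ moves either grading; so it suffices to analyse one summand $\td_{\tr(\C,a),\eps|_a}\colon\Lambda\V(\tr(\C,a^0))\to\Lambda\V(\tr(\C,a^1))$ for a fixed $a\in\f(n,k)$.

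The h-grading is the easy half: the vertex $a^1$ is obtained from $a^0$ by switching the $k$ coordinates of $a$ that equal $*$ from $0$ to $1$, so $|a^1|=|a^0|+k$, and therefore this summand maps $\textup{C}(\C)^{|a^0|}$ into $\textup{C}(\C)^{|a^0|+k}$. Summing over $a\in\f(n,k)$ shows that $\td_k(\C,\eps)$ raises the h-grading by $k$.

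For the $\delta$-grading write $\C'\defeq\tr(\C,a)$, a $k$-dimensional configuration with $\V(\C')=\V(\tr(\C,a^0))$ and $\V((\C')^*)=\V(\tr(\C,a^1))$. A canonical basis vector on which $\td_{\C',\eps|_a}$ acts non-trivially has the form $x_{\textup{act}(\C'),\tau,\theta}\wedge\omega$ for some type $\tau$ with $\C'$ of type $\tau$, some allowed path $\theta$, and $\omega$ a wedge of $\ell\ge0$ passive circles of $\C'$; it lies in $\Lambda^{m}\V(\tr(\C,a^0))$ with $m=\gr(x_{\textup{act}(\C'),\tau,\theta})+\ell$, and its image lies in $\Lambda^{m'}\V(\tr(\C,a^1))$ with $m'=\gr(y_{\textup{act}(\C'),\tau})+\ell$. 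Since $\C'$ and $(\C')^*$ have the same passive circles, $|\textup{act}(\C')^*|-|\textup{act}(\C')|=|(\C')^*|-|\C'|$, so the grading rule applied to the active $k$-dimensional configuration $\textup{act}(\C')$ gives
\[m'-m=\frac{|(\C')^*|-|\C'|-k}{2}+1.\]
Using $|\tr(\C,a^0)|=|\C'|$, $|\tr(\C,a^1)|=|(\C')^*|$, and $|a^1|=|a^0|+k$ in the definition of the $\delta$-grading, the $\delta$-grading of the image minus that of the source is
\[\bigl(|(\C')^*|-2m'-|a^1|\bigr)-\bigl(|\C'|-2m-|a^0|\bigr)=\bigl(|(\C')^*|-|\C'|\bigr)-k-2(m'-m)=-2,\]
so $\td_k(\C,\eps)$ decreases the $\delta$-grading by $2$.

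Apart from this bookkeeping there is no genuine obstacle; the only point requiring care is that the grading rule was recorded only for active configurations, so one must observe that wedging with a fixed $\omega$ of passive circles raises source and target exterior degree by the same amount $\ell$, and that $|(\C')^*|-|\C'|$ does not see the passive circles --- which is exactly what allows the rule to be invoked for $\textup{act}(\C')$ in place of $\C'$.
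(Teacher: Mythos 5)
Your proposal is correct and takes the same route as the paper, which simply states that the corollary follows from the grading rule; you have supplied the straightforward bookkeeping that the paper leaves implicit, including the observation about passive circles needed to pass from $\textup{act}(\C')$ to $\C'$.
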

Now we state that $\td(\C,\eps)$ is really a differential:
\begin{thm}\label{thdd}
We have $\td(\C,\eps)\circ\td(\C,\eps)=0$.
\end{thm}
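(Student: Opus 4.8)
The plan is to verify $\td(\C,\eps)\circ\td(\C,\eps)=0$ by grouping the contributions according to how much they raise the h-grading. Writing $\td=\sum_{k\ge 1}\td_k$, the equation $\td\circ\td=0$ is equivalent to the system
\[
\sum_{i+j=N,\ i,j\ge 1}\td_i(\C,\eps)\circ\td_j(\C,\eps)=0\qquad\text{for every }N\ge 2,
\]
since $\td_i\circ\td_j$ raises the h-grading by exactly $i+j$ (Corollary \ref{corgr}). Each composite $\td_i\circ\td_j$ is, by definition, a sum over pairs $(a,b)$ with $a\in\f(n,j)$, $b\in\f(n,i)$ and $a^1=b^0$; such a pair is the same as a single face $c\in\f(n,N)$ together with a splitting of its $*$-positions into a ``lower'' block (realized first, giving $a$) and an ``upper'' block (realized second, giving $b$). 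So the task reduces to a \emph{local} statement: fix $c\in\f(n,N)$ and show that the sum over all order-$2$ factorizations $c=b\ast a$ of the signed composites $\pm\,\td_{\tr(\C,b),\eps|_b}\circ\td_{\tr(\C,a),\eps|_a}$ vanishes. Because $\td_{\tr(\C,c')}$ depends only on $\tr(\C,c')$, and passive circles ride along untouched (they only contribute through the $\omega$-factor in Definition \ref{nact}-style extensions), it suffices to treat $\D\defeq\tr(\C,c)$ active and to prove: for every active $N$-dimensional configuration $\D$ and every edge assignment $\eta$ of type Y with respect to $\D$,
\[
\sum_{\substack{\text{ordered factorizations}\\ c=b\ast a}}
(-1)^{e(a,b)}\;\td_{\tr(\D,b),\eta|_b}\circ\td_{\tr(\D,a),\eta|_a}=0,
\]
where $e(a,b)$ is the sign exponent $|a^0|+(j+1)\tsp+|b^0|+(i+1)\tsp$ coming from the definition of $\td_i,\td_j$.

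The next step is to make the local statement combinatorial. On $\Lambda\V(\D)$ each $\td_{\cdot,\tau,\cdot}$ sends (up to sign) a single canonical basis element to a single canonical basis element, so a composite $\td_{\tr(\D,b)}\circ\td_{\tr(\D,a)}$ is nonzero on at most one basis vector, and whether it is nonzero is governed purely by the types of $\tr(\D,a)$ and $\tr(\D,b)$ drawn from the list $\ttt$. Thus for fixed $\D$ one enumerates which types $\tau\in\ttt$ can appear at an intermediate vertex and which pairs of types can be composed; this is a finite case analysis on the ``two-step degeneration'' patterns of configurations. The signs $\eps(\theta)$ are handled via the path-of-edges formalism already set up: the two composable maps together pick out a path of edges through $c$, and $\eta|_b(\theta_{\text{upper}})\cdot\eta|_a(\theta_{\text{lower}})=\eta(\theta)$ for the concatenated path; the split-signs $\tsp(\D,\theta)$ multiply analogously by the Lemma after Definition \ref{spdef}. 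The upshot is that for each configuration type of $\D$ one gets a bunch of $\pm 1$'s indexed by intermediate vertices, and one must check they cancel in pairs. Here the defining property of a type-Y edge assignment (Definition \ref{defxy}) is exactly what is needed: it pins down the relative signs $p(s,\eta)$ on the $2$-dimensional sub-faces, which is what controls whether two adjacent intermediate vertices contribute with opposite signs. This is the same bookkeeping already carried out, case by case, in the ``independence of $\theta$'' arguments for each $\tau$; the $\td\circ\td=0$ computation reuses those sign computations at one dimension higher.

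Concretely I would organize the verification as: (i) reduce to $\D$ active and to the local identity above; (ii) for low $N$ ($N=2$) observe that it is literally the statement that $\partial(\C,\eps)\circ\partial(\C,\eps)=0$ combined with the new $\td_2$ terms, which matches Szab\'o's type classification of squares together with the type-Y condition; (iii) for general $N$, run through the types of $\D$ — the ones where $\D$ itself is of some type $\tau\in\ttt$, and the ones where $\D$ degenerates in two different ways — pairing up the order-$2$ factorizations and using the type-Y relation on each intervening square to see the two terms of a pair cancel. The orientation-dependence analysis promised for Section \ref{sdep} can be invoked to normalize the arc orientations on $\D$ to a convenient representative of $\overline{\D}$ before running the case check, since the isomorphism class of everything in sight depends only on $\overline{\D}$ and the type of $\eta$.

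The main obstacle I expect is step (iii): the bookkeeping of the composite signs $(-1)^{|a^0|+(j+1)\tsp(\D,a^0)}\cdot(-1)^{|b^0|+(i+1)\tsp(\D,b^0)}$ against $(-1)^{|c^0|+(N+1)\tsp(\D,c^0)}$ must be shown to produce exactly the relative sign needed so that the type-Y relation forces cancellation — in other words, verifying that the \emph{ad hoc}-looking exponent $|a^0|+(k+1)\tsp(\C,a^0)$ in the definition of $\td_k$ is precisely engineered to make these pairings antisymmetric. Getting that exponent to interact correctly with the $\tsp(\D,\theta)$ factors (which themselves depend on how many splits occur along the path) across all six families of types, including the asymmetric $\tf_{p,q}$/$\tg_{p,q}$ split, is where the real work lies; the author states this was done in detail in the master thesis \cite{master} and only sketched in Section \ref{sdd}, which is consistent with this being the technical heart of the argument.
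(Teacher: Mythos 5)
Your outline — decompose $\td\circ\td$ by the amount it raises the h-grading, localize to a single face $c\in\f(n,N)$, discard passive circles, and then run a type-by-type cancellation argument controlled by the type-Y edge-assignment condition — is exactly the shape of the paper's reduction (Theorem~\ref{thdd2}), and the final sign bookkeeping you flag as the hard part is indeed the hard part. But the mechanism the paper uses to make that bookkeeping tractable is different from what you sketch, and the difference is not cosmetic.

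You propose to ``normalize the arc orientations on $\D$ to a convenient representative of $\overline{\D}$'' and then run the case check once. There are two problems with this as stated. First, the fact that $\td(\C,\eps)^2=\td(\D,\etaup)^2$ when $\overline{\C}=\overline{\D}$ is not a free consequence of ``isomorphism class depends only on $\overline{\D}$'': the chain-map identity $\td(\D,\iota)\circ f=f\circ\td(\C,\eps)$ in the proof of Theorem~\ref{thind} does not by itself give you what you need at the level of the truncated sum $\sum_{i+j=N}\td_i\circ\td_j$. The paper's Lemma~\ref{lemdd} proves that this truncated sum is orientation-independent, and its proof uses the edge-homotopy identity of Theorem~\ref{dhth} \emph{together with the induction hypothesis} that the corresponding sums vanish in lower dimension (these are exactly the two terms $\sum_i\td_{n-i}\circ\td_{i-1}$ and $\sum_i\td_{n-i-1}\circ\td_i$ that one needs to know are zero). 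So the normalization step is not available before the induction is set up; it is the content of the inductive step.

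Second, and more importantly, the paper does not fix one convenient orientation per configuration. Theorem~\ref{thdd3} is a pointwise statement: for each pair of monomials $(\alpha,\beta)$ there exists \emph{some} orientation $\C$ of $\overline{\D}$ for which the $(\alpha,\beta)$-entry of $\sum\td_{n-i}\circ\td_i$ vanishes, and different entries may require different orientations. Combined with Lemma~\ref{lemdd} (which says the entry doesn't depend on the orientation anyway), this kills every entry. This per-entry freedom is what makes the remaining case analysis manageable; your version, where one orientation must simultaneously make all entries vanish, is a genuinely harder problem and it is not clear it is even true that such a globally convenient orientation exists. So the gap in your proposal is the absence of the two-part inductive scaffold — orientation-independence of $\td^2$ given the induction hypothesis, plus per-entry killability — without which the case check you describe in step (iii) would have to be carried out over all orientations at once, and the sign analysis would not close.
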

We sketch the proof \cite{master} in Section \ref{sdd}.
\begin{defn}
Let $\hc(\C,\eps)$ be the chain complex $\left(\tc(\C),\td(\C,\eps)\right)$.
\end{defn}
From Lemma \ref{lemd1} we get:
\begin{cor}\label{cord1}
The chain complex $\left(\tc(\C),\td_1(\C,\eps)\right)$ is isomorphic to the odd Khovanov complex $\textup{C}(\C,\eps)=\left(\tc(\C),\partial(\C,\eps)\right)$.
\end{cor}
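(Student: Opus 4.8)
The plan is to compute $\td_1(\C,\eps)$ in closed form, notice that it differs from the odd Khovanov differential only by a vertex-dependent sign, and then remove that sign by conjugating with a diagonal automorphism. First I would specialise the definition of $\td_k(\C,\eps)$ to $k=1$: the prefactor $(-1)^{|a^0|+(k+1)\,\textup{sp}(\C,a^0)}$ becomes $(-1)^{|a^0|+2\,\textup{sp}(\C,a^0)}=(-1)^{|a^0|}$, so
\[
\td_1(\C,\eps)=\bigoplus_{a\in\f(n,1)}(-1)^{|a^0|}\,\td_{\tr(\C,a),\eps|_a}.
\]
For $a\in\f(n,1)$ the configuration $\tr(\C,a)$ is $1$-dimensional, $\f(1,1)=\{*\}$, and unravelling the definition of the restricted edge assignment gives $\eps|_a(*)=\eps(a)$; hence Lemma \ref{lemd1} yields $\td_{\tr(\C,a),\eps|_a}=\eps(a)\,\partial_{\tr(\C,a)}$. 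Consequently
\[
\td_1(\C,\eps)=\bigoplus_{a\in\f(n,1)}(-1)^{|a^0|}\eps(a)\,\partial_{\tr(\C,a)},
\qquad
\partial(\C,\eps)=\bigoplus_{a\in\f(n,1)}\eps(a)\,\partial_{\tr(\C,a)},
\]
so the two differentials on the common group $\tc(\C)=\bigoplus_{a\in\f(n,0)}\Lambda\V(\tr(\C,a))$ agree up to the factor $(-1)^{|a^0|}$ on each edge $a$.

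\emph{The isomorphism.} I would then let $\Phi\colon\tc(\C)\to\tc(\C)$ be $(-1)^{\binom{|a|}{2}}\cdot\mathrm{id}$ on the summand $\Lambda\V(\tr(\C,a))$, for each $a\in\f(n,0)$. This summand lies entirely in h-degree $|a|$, and $\Phi$ acts on it by a single unit scalar, so it preserves the $\delta$-grading as well; thus $\Phi$ is an automorphism of the bigraded abelian group $\tc(\C)$. To check $\Phi\circ\td_1(\C,\eps)=\partial(\C,\eps)\circ\Phi$ it suffices to compare the contribution of one edge $a$, from $a^0$ to $a^1$ with $|a^1|=|a^0|+1$. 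Using $\binom{|a^1|}{2}-\binom{|a^0|}{2}=\binom{|a^0|+1}{2}-\binom{|a^0|}{2}=|a^0|$, the edge contribution of $\Phi\circ\td_1(\C,\eps)$ on $\Lambda\V(\tr(\C,a^0))$ is
\[
(-1)^{\binom{|a^1|}{2}}(-1)^{|a^0|}\eps(a)\,\partial_{\tr(\C,a)}
=(-1)^{\binom{|a^0|}{2}}\eps(a)\,\partial_{\tr(\C,a)},
\]
which is exactly the edge contribution of $\partial(\C,\eps)\circ\Phi$. Hence $\Phi$ is an isomorphism of bigraded chain complexes $(\tc(\C),\td_1(\C,\eps))\to\textup{C}(\C,\eps)$.

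I do not expect any real difficulty here: once the $\td_1$-prefactor is simplified and Lemma \ref{lemd1} is applied, everything collapses to the bookkeeping identity $\binom{m+1}{2}-\binom{m}{2}=m$, and the only point needing care is that this identity is precisely what makes the diagonal rescaling $\Phi$ compatible with the differential. As a slicker but less self-contained alternative one could instead observe $\td_1(\C,\eps)=\partial(\C,\eps')$ for the edge assignment $\eps'(a)\defeq(-1)^{|a^0|}\eps(a)$; for every square $s\in\f(n,2)$ the four starting vertices of its edges have an even total number of ones, so $p(s,\eps')=p(s,\eps)$ and $\eps'$ is again of type Y with respect to $\C$, whereupon the theorem that the odd Khovanov complex depends only on $\overline{\C}$ and the type of the edge assignment finishes the argument — the map $\Phi$ above being one explicit realisation of that isomorphism.
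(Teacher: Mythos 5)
Your proof is correct and takes the same route as the paper: the paper simply cites Lemma~\ref{lemd1} and leaves the residual sign $(-1)^{|a^0|}$ and the choice of isomorphism implicit, whereas you spell out that $\td_1(\C,\eps)=\bigoplus_{a\in\f(n,1)}(-1)^{|a^0|}\eps(a)\partial_{\tr(\C,a)}$ and exhibit the diagonal automorphism $\Phi$ with the bookkeeping identity $\binom{m+1}{2}-\binom{m}{2}=m$. Both the explicit $\Phi$ and the alternative via the modified edge assignment $\eps'(a)=(-1)^{|a^0|}\eps(a)$ (which remains of type Y since each square contributes an even number of ones among the four edge starting points) are valid.
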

By comparing the construction of Szab\'o \cite{szabo} with our construction we easily see that we have constructed a lift of Szab\'o's complex:
\begin{lem}
The complex $\hc(\C,\eps)\otimes_\z\z_2$ is isomorphic to the complex constructed by Szab\'o.
\end{lem}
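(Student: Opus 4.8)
The plan is to use for the isomorphism the canonical identification of the underlying bigraded $\z_2$-modules and then check it is compatible with the two differentials, exploiting that the construction above was built termwise as a lift of Szab\'o's. Concretely, by Theorem~\ref{mod2th} the bigraded group $\tc(\C)\otimes_\z\z_2$ (with its $(\textup{h},\delta)$-grading) is the mod~2 even Khovanov complex of the corresponding diagram; in particular at each vertex $a\in\f(n,0)$ the $\z_2$-module $\Lambda\V(\tr(\C,a))\otimes_\z\z_2$ carries the canonical basis consisting of the wedge products of subsets of the circles of $\tr(\C,a)$, that is, of the labelings of these circles by a two-element set. These are exactly the generators on which Szab\'o's complex is built, and the bigrading shift of Definition~\ref{defkd} agrees with Szab\'o's. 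So I would let $\Psi$ be the resulting isomorphism of bigraded $\z_2$-modules from $\hc(\C,\eps)\otimes_\z\z_2$ to the complex of Szab\'o, and everything then reduces to showing $\Psi$ is a chain map.

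Next I would reduce $\td(\C,\eps)$ modulo~2. Since $\td_k(\C,\eps)=\bigoplus_{a\in\f(n,k)}(-1)^{|a^0|+(k+1)\tsp(\C,a^0)}\td_{\tr(\C,a),\eps|_a}$, reducing mod~2 kills the global signs and leaves $\td(\C,\eps)\otimes_\z\z_2=\sum_{k\ge 1}\bigoplus_{a\in\f(n,k)}\bigl(\td_{\tr(\C,a),\eps|_a}\otimes_\z\z_2\bigr)$. For a configuration $\D$ of type $\tau\in\ttt$ the map $\td_{\D,\tau,\etaup}$ sends $x_{\textup{act}(\D),\tau,\theta}\wedge\omega$ to $\etaup(\theta)\,\tsp(\D,\theta)\,y_{\textup{act}(\D),\tau}\wedge\omega$ and all other canonical basis elements to $0$; modulo~2 one has $\etaup(\theta)=\tsp(\D,\theta)=1$, and the signs $(-1)^{n+1}$, $(-1)^n$, $(-1)^{p+1}$ occurring in $y_{\C,\ta_n}$, $y_{\C,\tc_{p,q}}$, $y_{\C,\tg_{p,q}}$ become $1$. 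Hence $\td_{\D,\tau,\etaup}\otimes_\z\z_2$ is independent of $\etaup$ and is the $\z_2$-linear map attached to $\tau$ that carries the canonical basis element reducing from $x_{\textup{act}(\D),\tau,\theta}\wedge\omega$ to the one reducing from $y_{\textup{act}(\D),\tau}\wedge\omega$ (well defined since $\td_{\D,\tau,\etaup}$, hence $x_{\textup{act}(\D),\tau,\theta}$ up to sign, is independent of $\theta$). By Corollary~\ref{corgr} this raises the $\textup{h}$-grading by $k$ and lowers the $\delta$-grading by $2$, as in Szab\'o's conventions.

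Finally I would match these mod~2 maps with Szab\'o's face maps, type by type. Our set $\ttt$ coincides with Szab\'o's except that his type $\te_{p,q}$ is split into our $\tf_{p,q}$ and $\tg_{p,q}$; in dimension $\ge 3$ a configuration has at most one type, and the only lower-dimensional overlaps (a join is of type $\ta_1$, $\tf_{1,0}$, $\tg_{1,0}$; a split of type $\tb_1$, $\tf_{0,1}$, $\tg_{0,1}$) already give the correct total map, since there $\sum_{\tau:\,\C\text{ of type }\tau}\td_{\C,\tau,\eps}=\eps(*)\partial_\C$ by Lemma~\ref{lemd1}. For each of our types, the recipe for the input generator $x_{\C,\tau,\theta}$ and the output generator $y_{\C,\tau}$, read modulo~2, is by construction Szab\'o's recipe for the configuration type of the same name \cite[Section~4]{szabo}; and for a configuration of Szab\'o's type $\te_{p,q}$, whichever of $\td_{\C,\tf_{p,q},\eps}$, $\td_{\C,\tg_{q,p},\eps}$ applies reduces modulo~2 to Szab\'o's $\te_{p,q}$-map. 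Summing over all faces $a$ and all applicable types then shows $\Psi$ intertwines $\td(\C,\eps)\otimes_\z\z_2$ with Szab\'o's differential, so $\Psi$ is an isomorphism of bigraded chain complexes. I expect the main obstacle to be precisely this last step, the patient symbol-by-symbol comparison of the two type-by-type definitions together with the bookkeeping of how $\te_{p,q}$ decomposes into $\tf_{p,q}$ and $\tg_{p,q}$; but because all signs vanish mod~2 and the types were chosen to coincide with Szab\'o's, this is a routine unwinding of definitions rather than a point where a new idea is required.
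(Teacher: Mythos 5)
The paper gives no written proof of this lemma---it simply asserts that one ``easily sees'' the claim by comparing the two constructions---and your proposal supplies exactly the routine unwinding that the paper is leaving to the reader: identify the underlying bigraded $\z_2$-modules, observe that all signs ($\eps(\theta)$, $\tsp(\D,\theta)$, the $(-1)^{n+1}$-type coefficients in $y_{\C,\tau}$, and the global sign in the definition of $\td_k$) vanish modulo~$2$, and match the surviving type-by-type generator rules with Szab\'o's, using that the type sets coincide apart from $\te_{p,q}$ splitting into $\tf_{p,q}$ and $\tg_{p,q}$. This is the same approach, just written out. One small omission in your accounting of overlapping types: you handle the one-dimensional overlaps via Lemma~\ref{lemd1}, but not the two-dimensional one (a type~Y square is of both type $\tc_{1,1}$ and type $\tde_{1,1}$); this causes no trouble, since Szab\'o's types $\textup{C}_{1,1}$ and $\textup{D}_{1,1}$ overlap in exactly the same way, so the two sums of face maps still agree term by term, but it is worth stating explicitly.
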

Next we state that $\hc(\C,\eps)$ is independent of $\eps$ and the orientation on the arcs of $\C$.
\begin{thm}\label{thind}
Let $\C$ and $\D$ be oriented configurations with $\overline{\C}=\overline{\D}$. Let $\eps$ and $\etaup$ be edge assignments so that $\eps$ with respect to $\C$ and $\etaup$ with respect to $\D$ has type Y. Then we have $\hc(\C,\eps)\cong\hc(\D,\etaup)$.
\end{thm}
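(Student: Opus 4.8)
The plan is to separate two independent changes and compose the isomorphisms they produce. Since $\overline{\C}=\overline{\D}$, the configuration $\D$ is obtained from $\C$ by reversing the orientations of finitely many arcs; reversing them one arc at a time (using that every oriented configuration carries a type-Y edge assignment) reduces the theorem to two cases, which one then chains together: (i) $\C=\D$ as oriented configurations and $\eps,\etaup$ are both of type Y with respect to $\C$; and (ii) $\D$ is $\C$ with the orientation of a single arc $\gammaup_j$ reversed and $\etaup$ is any type-Y edge assignment with respect to $\D$.

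For case (i), two edge assignments of type Y with respect to the same $\C$ satisfy $p(s,\eps)=p(s,\etaup)$ for every square $s\in\f(n,2)$ by Definition \ref{defxy}, so $\eps\etaup$ is a $\{-1,1\}$-valued $1$-cocycle on the hypercube; as the hypercube is contractible there is $f\colon\f(n,0)\to\{-1,1\}$ with $\eps(b)\etaup(b)=f(b^0)f(b^1)$ on every edge $b$. Let $\Phi\colon\tc(\C)\to\tc(\C)$ act on $\Lambda\V(\tr(\C,a))$ as multiplication by $f(a)$; this is an isomorphism of bigraded abelian groups. For $a\in\f(n,k)$ and any allowed path of edges $\theta$ from $a^0$ to $a^1$, a telescoping product along the vertices of $\theta$ gives $\eps(\theta)=f(a^0)f(a^1)\,\etaup(\theta)$, while $\tsp(\C,\theta)$, the element $y_{\C,\tau}$, the passive factor, and the prefactor $(-1)^{|a^0|+(k+1)\tsp(\C,a^0)}$ do not involve the edge assignment; hence $\td_{\tr(\C,a),\eps|_a}=f(a^0)f(a^1)\,\td_{\tr(\C,a),\etaup|_a}$. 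A summand-by-summand comparison then shows $\Phi\circ\td(\C,\etaup)=\td(\C,\eps)\circ\Phi$, so $\Phi$ is an isomorphism $\hc(\C,\etaup)\cong\hc(\C,\eps)$.

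For case (ii), the underlying bigraded groups of $\hc(\C,\eps)$ and $\hc(\D,\etaup)$ are canonically the same: only arcs carry orientations, so the circles of $\tr(\C,a)$ and $\tr(\D,a)$ agree for every $a$, whence $\tc(\C)=\tc(\D)$, and moreover $\tsp(\C,\cdot)=\tsp(\D,\cdot)$ since $\tsp$ records only numbers of circles, so the prefactors $(-1)^{|a^0|+(k+1)\tsp(\C,a^0)}$ agree as well. One therefore looks for an isomorphism $\Psi=\bigoplus_a\sigma(a)\,\mathrm{id}$ with $\sigma\colon\f(n,0)\to\{-1,1\}$. The substance is the analysis of Section \ref{sdep}: going type by type through Table \ref{ttypes} one determines how reversing $\gammaup_j$ changes the type of an active configuration and its distinguished elements $x_{\C,\tau,\theta},y_{\C,\tau}$ — the labelings $x_i$ and the join/split dichotomy there being orientation dependent — and one checks, using that $\td_{\C,\tau,\eps}$ is path-independent so that convenient paths may be chosen, that the accumulated sign change of the maps $\td_{\tr(\cdot,a),\cdot}$, together with the modification of the edge assignment forced on the finitely many squares whose type switches between X and Y, is the coboundary of a single $\sigma$. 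Combined with case (i) applied over $\D$, this delivers the diagonal isomorphism $\hc(\C,\eps)\cong\hc(\D,\etaup)$, and chaining the one-arc steps finishes the general case.

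The reduction and case (i) are routine; the main obstacle is case (ii), namely the type-by-type computation of Section \ref{sdep} together with the verification that the various signs — those of $\eps$, of $\tsp$, of the exterior-algebra labelings in Table \ref{ttypes}, and of the prefactor $(-1)^{|a^0|+(k+1)\tsp(\C,a^0)}$ — always assemble into the coboundary of a vertex function, so that a diagonal map suffices.
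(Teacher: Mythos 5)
Your reduction to a single arc reversal and your treatment of case (i) (the cocycle/coboundary argument on the hypercube, giving a diagonal isomorphism for two type-Y edge assignments on the same oriented configuration) are both fine and match what the paper uses via \cite[Lemma 2.2]{ors}.

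The gap is in case (ii): a \emph{diagonal} isomorphism $\Psi=\bigoplus_a\sigma(a)\,\mathrm{id}$ cannot work, and no amount of type-by-type sign bookkeeping will fix this. A diagonal map preserves the h-grading, so conjugation by $\Psi$ sends each $\td_k$ to a diagonally conjugated $\td_k$, and (since the faces of a fixed dimension contribute to pairwise distinct components of $\bigoplus_a\Lambda\V(\tr(\C,a))\to\bigoplus_a\Lambda\V(\tr(\C,a))$) conjugation acts face by face. But reversing one arc can turn a face of type Y into one of neither type X nor Y: as the discussion around Figure~\ref{f4} shows, $\td_{\C,\eps}\ne 0$ while $\td_{\D,\etaup}=0$ for that $2$-dimensional face. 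No choice of signs $\sigma(a^0),\sigma(a^1)$ can intertwine a nonzero map with the zero map, so already $n=2$ with $\C$ as in Figure~\ref{f4} gives a counterexample to your claim that a diagonal $\Psi$ suffices. This is exactly why the paper introduces the edge-homotopy maps $\tth_i(\C,\eps)$: Theorem~\ref{dhth} shows $\td(\C,\eps)-\td(\D,\etaup)$ is a commutator $\td\circ\tth_i-\tth_i\circ\td$, and the correct isomorphism is the \emph{upper-triangular} map $f=\mathrm{id}+\tth_i(\C,\eps)$ (which raises h-grading by $0$ or $1$, with $\tth_i\circ\tth_i=0$ making $\mathrm{id}-\tth_i$ a two-sided inverse). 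The substance of the theorem is thus concentrated in Theorem~\ref{dhth}, not in a sign-coboundary computation; your proposal is missing the homotopy correction entirely.
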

We will prove this in Section \ref{sdep}. To get our spectral sequence we need:
\begin{defn}
With the h-grading we get a filtration $\tf$ on $\hc(\C,\eps)$ by $\tf_i\tc(\C)\defeq\oplus_{h\ge i}\tc(\C)^h$.
\end{defn}
Due to the theory of spectral sequences \cite{weibel} we can define:
\begin{defn}
Let $\te(\C,\eps)$ be the spectral sequence we get from the filtration $\tf$ on $\hc(\C,\eps)$.
\end{defn}
From Corollary \ref{cord1} we get:
\begin{cor}
We have a canonical isomorphism from $\te^2(\C,\eps)$ to the odd Khovanov homology $\textup{H}(\tc(\C,\eps))$.
\end{cor}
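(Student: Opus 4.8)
The plan is to unwind the standard construction of the spectral sequence of a bounded filtered complex \cite{weibel} and to identify its first pages directly. First I would note that the filtration $\tf$ is bounded: since $\C$ is $n$-dimensional we have $\tc(\C)^h=0$ unless $0\le h\le n$, so $\tf_0\tc(\C)=\tc(\C)$ and $\tf_{n+1}\tc(\C)=0$. Hence all pages of $\te(\C,\eps)$ are well defined. Moreover the h-grading gives a canonical splitting of the filtration, so that the associated graded object in filtration degree $p$ is canonically $\tf_p\tc(\C)/\tf_{p+1}\tc(\C)\cong\tc(\C)^p$.

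Next I would compute the $\te^0$- and $\te^1$-pages. By Corollary \ref{corgr} each $\td_k(\C,\eps)$ with $k\ge1$ raises the h-grading by exactly $k$; in particular no summand of $\td(\C,\eps)=\sum_{k\ge1}\td_k(\C,\eps)$ preserves the h-grading, so $\td(\C,\eps)$ maps $\tf_p\tc(\C)$ into $\tf_{p+1}\tc(\C)$. Therefore the differential induced on the associated graded vanishes, i.e.\ the $\te^0$-differential is zero and $\te^1(\C,\eps)=\te^0(\C,\eps)$ is, under the splitting above, just the group $\tc(\C)$. The $\te^1$-differential is the map induced by the part of $\td(\C,\eps)$ that raises filtration degree by exactly $1$, which by Corollary \ref{corgr} is precisely $\td_1(\C,\eps)$. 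Hence $\bigl(\te^1(\C,\eps),d^1\bigr)$ is canonically identified with the chain complex $\bigl(\tc(\C),\td_1(\C,\eps)\bigr)$.

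Finally, $\te^2(\C,\eps)=\textup{H}\bigl(\te^1(\C,\eps),d^1\bigr)=\textup{H}\bigl(\tc(\C),\td_1(\C,\eps)\bigr)$, and Corollary \ref{cord1} provides an isomorphism of this with the homology of the odd Khovanov complex $\textup{H}(\tc(\C,\eps))$. Composing the canonical identification of the $\te^1$-page with $\bigl(\tc(\C),\td_1(\C,\eps)\bigr)$ and the isomorphism of Corollary \ref{cord1} yields the asserted canonical isomorphism $\te^2(\C,\eps)\cong\textup{H}(\tc(\C,\eps))$. The argument involves no genuine difficulty; the only point requiring attention is the spectral-sequence indexing convention (pages are counted from $\te^2$ in this article), which forces one to check that the degenerate $\te^0$-differential really is zero — exactly the content of Corollary \ref{corgr} that every $\td_k$ strictly raises the h-grading — and that the higher pieces $\td_k$ with $k\ge2$ contribute only to the later differentials $d^k$, not to $d^1$.
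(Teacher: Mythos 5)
Your proof is correct and follows essentially the same route as the paper, which states the corollary as an immediate consequence of Corollary \ref{cord1} without spelling out the standard spectral-sequence bookkeeping. The key point you correctly make — that each $\td_k$ strictly raises the $\textup{h}$-grading, so $d^0=0$, the $\te^1$-page is $\tc(\C)$, and $d^1$ is induced by $\td_1(\C,\eps)$ — is exactly what the paper leaves implicit before invoking Corollary \ref{cord1}.
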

The theory of spectral sequences gives:
\begin{lem}
The spectral sequence $\te(\C,\eps)$ converges to the homology group $\textup{H}(\hc(\C,\eps))$, which has a grading induced by the $\delta$-grading on $\tc(\C)$ because of Corollary \ref{corgr}.
\end{lem}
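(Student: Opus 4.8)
The plan is to verify that the filtration $\tf$ on $\hc(\C,\eps)$ is bounded and then to apply the classical convergence theorem for filtered complexes with a bounded filtration; the $\delta$-grading on the homology will then be read off directly from Corollary \ref{corgr} and Theorem \ref{thdd}.

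First I would record that $\tf$ is a bounded filtration. If $\C$ is $n$-dimensional then $\tc(\C)^h=0$ unless $0\le h\le n$ (and for a link diagram the h-grading is merely shifted by $\textup{n}_-(\D)$, so still takes only finitely many values), so $\tf_i\tc(\C)=\tc(\C)$ for $i$ sufficiently small and $\tf_i\tc(\C)=0$ for $i$ sufficiently large. Moreover, by Corollary \ref{corgr} each $\td_k(\C,\eps)$ raises the h-grading by $k\ge 1$, so $\td(\C,\eps)$ carries $\tf_i\tc(\C)$ into $\tf_{i+1}\tc(\C)\subseteq\tf_i\tc(\C)$; hence $\bigl(\hc(\C,\eps),\tf\bigr)$ is a filtered chain complex with bounded filtration.

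Next I would invoke the classical convergence theorem for bounded filtrations \cite{weibel}: the spectral sequence $\te(\C,\eps)$ then stabilizes after finitely many pages, and its $\te^\infty$-term is canonically isomorphic to the associated graded of $\textup{H}(\hc(\C,\eps))$ with respect to the filtration induced on homology by $\tf$. This is precisely the statement that $\te(\C,\eps)$ converges to $\textup{H}(\hc(\C,\eps))$.

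Finally I would produce the grading. By Corollary \ref{corgr} each $\td_k(\C,\eps)$ lowers the $\delta$-grading by $2$, so $\td(\C,\eps)$ maps $\tc(\C)_\delta$ into $\tc(\C)_{\delta-2}$ for every $\delta$; together with $\td(\C,\eps)\circ\td(\C,\eps)=0$ (Theorem \ref{thdd}) this allows the definition
\[\textup{H}_\delta(\hc(\C,\eps))\defeq\ker\bigl(\td(\C,\eps)|_{\tc(\C)_\delta}\bigr)\big/\operatorname{im}\bigl(\td(\C,\eps)|_{\tc(\C)_{\delta+2}}\bigr),\]
giving $\textup{H}(\hc(\C,\eps))=\bigoplus_{\delta\in\z}\textup{H}_\delta(\hc(\C,\eps))$. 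Since each $\tf_i\tc(\C)$ is a direct sum of $\delta$-graded subgroups, this grading is inherited by every page of $\te(\C,\eps)$ and is compatible with the convergence above. I do not expect any genuine obstacle here; the only point requiring a little care is that $\td(\C,\eps)$ shifts $\delta$ by $-2$ rather than preserving it, which is why the homology in a fixed $\delta$-degree has to be defined by the displayed formula (equivalently, one first splits $\tc(\C)$ into its even- and odd-$\delta$ subcomplexes), and everything else is a direct application of the bounded-filtration convergence theorem.
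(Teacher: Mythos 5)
Your proof is correct and takes the same route the paper implicitly takes: the paper's own ``proof'' is just the phrase ``the theory of spectral sequences gives,'' and you have filled in exactly the standard facts that are being cited, namely boundedness of the filtration, the classical convergence theorem for bounded filtrations from \cite{weibel}, and the observation that since every $\td_k(\C,\eps)$ is homogeneous of degree $-2$ in $\delta$ (Corollary \ref{corgr}) the differential, the filtration, and hence every page of the spectral sequence and the abutment all decompose by $\delta$. The parenthetical remark about splitting into even- and odd-$\delta$ subcomplexes is harmless but not needed; homogeneity of $\td(\C,\eps)$ of degree $-2$ already forces $\ker$ and $\operatorname{im}$ to be $\delta$-graded subgroups, giving the grading on homology directly.
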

Next we define the spectral sequence for a link diagram $\D$.
\begin{defn}
Let $\C$ be an oriented configuration so that $\overline{\C}$ is a 0-resolution of $\D$. Furthermore let $\eps$ be an edge assignment of type Y respective $\C$. Then $\hc(\D,\C,\eps)$ is the bigraded chain complex that only differs from $\hc(\C,\eps)$ in the bigrading as described in Definition \ref{defkd}. From $\hc(\D,\C,\eps)$ we get the spectral sequence $\te(\D,\C,\eps)$
\end{defn}
The spectral sequence is Reidemeister invariant in the following sense.
\begin{thm}
Let $\D_1$ and $\D_2$ be link diagrams that differ by finitely many Reidemeister moves. Then there exist oriented configurations $\C_1$, $\C_2$, so that $\overline{\C}_1$, $\overline{\C}_2$ are 0-resolutions of $\D_1$, $\D_2$ and edge assignments $\eps_1$, $\eps_2$, so that $\eps_1$ with respect to $\C_1$ and $\eps_2$ with respect to $\C_2$ has type Y and so that $\te(\D_1,\C_1,\eps_1)\cong\te(\D_2,\C_2,\eps_2)$ and for the homology groups we have $\textup{H}_{\delta}(\hc(\D_1,\C_1,\eps_1))
\cong\textup{H}_{\delta}(\hc(\D_2,\C_2,\eps_2))$.
\end{thm}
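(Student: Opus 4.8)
The plan is to run the classical Reidemeister-invariance argument at the level of \emph{filtered} complexes. First I would reduce to the case that $\D_1$ and $\D_2$ differ by a single Reidemeister move: a filtered chain homotopy equivalence of the correct bidegree induces, by standard spectral-sequence theory \cite{weibel}, an isomorphism of spectral sequences from the $\te^2$-page onward and an isomorphism on the limit homology respecting the $\delta$-grading, and such equivalences compose, so the general case follows by induction on the number of moves with the bidegree shifts of Definition \ref{defkd} adding up. For a single move I would use Theorem \ref{thind} to replace $\C_1,\C_2$ and the type-Y edge assignments $\eps_1,\eps_2$ by ones adapted to the local picture, so that the arcs away from the move agree and everything becomes a local computation; since the shifts of Definition \ref{defkd} are designed exactly to absorb the changes of $\textup{n}_+$ and $\textup{n}_-$, it suffices to produce a filtered chain map of the correct \emph{unshifted} $(\textup{h},\delta)$-bidegree whose induced map on $\td_1$-homology (the $\te^2$-page, which by Corollary \ref{cord1} is odd Khovanov homology) is an isomorphism; between finite filtered complexes such a map is automatically a filtered chain homotopy equivalence.

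For the first and second Reidemeister moves I would argue by filtered Gaussian elimination. In each case the complex of the larger diagram contains, with respect to the leading differential $\td_1$, a two-term acyclic summand on which $\td_1$ restricts to an isomorphism; because every $\td_k$ raises the h-filtration, one may cancel this summand by a filtered change of basis. The homological perturbation lemma then yields on the reduced complex an induced differential that is again a sum of maps raising the h-degree by at least one, together with a filtered chain homotopy equivalence to $\hc(\D_2,\C_2,\eps_2)$, and it remains to identify the reduced filtered complex with $\hc(\D_1,\C_1,\eps_1)$ up to the bidegree shift. On the $\td_1$-part this is the computation from \cite{ors}; for the higher part one checks, using the explicit formulas for $\td_{\C,\tau,\eps}$ and the list of types in Table \ref{ttypes}, that the correction terms $\td_k\,\td_1^{-1}\,\td_{k'}$ created by the cancellation reproduce exactly the higher maps attached to the faces of the smaller cube. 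This is a finite local verification; the only genuinely delicate point is the sign-and-$\tsp$ bookkeeping, which is governed by the type-Y condition of Definition \ref{defxy} in the same way as in the proof that each $\td_{\C,\tau,\eps}$ is independent of the chosen path of edges.

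The third Reidemeister move is the step I expect to be the main obstacle. Here the relevant local data is the three-dimensional subcube at the three crossings together with every face of the full cube meeting it, and no single cancellation trivialises the picture. I would proceed as in \cite{ors}: fix a braid-like local model, take the explicit chain homotopy equivalence that realises the move for the odd Khovanov complexes (the $\td_1$-level), and then extend its defining chain map to a filtered chain map intertwining the full $\td$. The crux is to show that this extension exists, i.e.\ that the higher differentials $\td_k$, $k\ge 2$, do not obstruct the odd-Khovanov R3 homotopy; this I would do by solving the lifting problem order by order in the h-filtration, checking face by face that the obstruction terms cancel, matching each configuration type of Table \ref{ttypes} that occurs near the move on the two sides. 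If the direct check proves unwieldy, an alternative I would try is to cut down the list of third-move pictures to be handled by combining with the already-established invariance under moves one and two (which is standard for braid-like R3 configurations), but the essential three-dimensional computation cannot be avoided, and again the sign computations, controlled by the type-Y edge assignment, are where all the care goes.

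Finally, once for each move a filtered chain map $f\colon\hc(\D_1,\C_1,\eps_1)\to\hc(\D_2,\C_2,\eps_2)$ of the correct $(\textup{h},\delta)$-bidegree inducing an isomorphism on the $\te^2$-page has been constructed, standard spectral-sequence theory \cite{weibel} gives isomorphisms $\te^r(\D_1,\C_1,\eps_1)\cong\te^r(\D_2,\C_2,\eps_2)$ for all $r\ge 2$ compatible with the differentials, hence $\te(\D_1,\C_1,\eps_1)\cong\te(\D_2,\C_2,\eps_2)$; since the filtration of the finite complex $\hc(\C,\eps)$ is bounded, passing to $r=\infty$ and using that $f$ is a filtered homotopy equivalence gives $\textup{H}_\delta(\hc(\D_1,\C_1,\eps_1))\cong\textup{H}_\delta(\hc(\D_2,\C_2,\eps_2))$. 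Composing over the finitely many moves relating $\D_1$ and $\D_2$ completes the proof.
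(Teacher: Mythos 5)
Your proposal is substantively correct and matches the paper's intent: the paper's proof consists solely of a citation to the Reidemeister-invariance arguments in \cite{szabo} (filtered Gaussian elimination and local cancellations for the mod 2 spectral sequence) and \cite{ors} (the odd Khovanov sign bookkeeping), and your outline — reducing to a single move, filtered cancellation with homological perturbation for R1/R2, extension of the odd R3 homotopy equivalence to a filtered map with obstruction control via the type-Y condition, and then the standard comparison theorem for bounded filtrations — is precisely the combination of those two sources that the citation is gesturing at.
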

\begin{proof}
This follows from the proofs of the Reidemeister invariance of Szab\'o's spectral sequence \cite{szabo} and of odd Khovanov homology \cite{ors}.
\end{proof}
So the spectral sequence and the homology of our complex are link invariants and we have proved Theorem \ref{mainth}.
\begin{defn}
For a link $L$ we denote by $\te(L)$ our spectral sequence. Furthermore $\hh(L)$ is the graded abelian group that arises as the homology of our complex.
\end{defn}
Now $\te(L)$ and $\hh(L)$ have the properties stated in Corollary \ref{maincor}: The $\te^2$-term of our spectral sequence is isomorphic to odd Khovanov homology $\te^2(L)\cong\textup{Kh}'(L)$. The spectral sequence $\te(L)$ converges to $\hh(L)$, from which one can get Szab\'o's $\hh(L;\z_2)$ \cite{szabo} with the Universal Coefficient Theorem.

We have defined $\hc(\C,\eps)$ only if $\eps$ has type Y with respect to $\C$. For type X we do the following:
\begin{defn}
Let $\eps$ be an edge assignment of type X with respect to $\C$. Then $\eps$ is of type Y with respect to $\textup{r}(\textup{m}(\C))$. So we can define
\[\hc '(\C,\eps)\defeq\left(\tc '(\C),\td '(\C,\eps)\right)\defeq\hc(\textup{r}(\textup{m}(\C)),\eps).\]
\end{defn}
From Corollary \ref{cord1} we get:
\begin{cor}
The chain complex $\left(\tc '(\C),\td_1'(\C,\eps)\right)$ is isomorphic to the odd Khovanov complex $\textup{C}(\textup{r}(\textup{m}(\C)),\eps)\cong\textup{C}(\C,\eps)$.
\end{cor}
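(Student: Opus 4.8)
The plan is to assemble both isomorphisms from material already in place, so that no new computation is required.

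First I unwind the definition of $\hc'(\C,\eps)$: by construction $\tc'(\C)=\tc(\textup{r}(\textup{m}(\C)))$ and $\td'(\C,\eps)=\td(\textup{r}(\textup{m}(\C)),\eps)$, and hence $\td_1'(\C,\eps)=\td_1(\textup{r}(\textup{m}(\C)),\eps)$. Since $\eps$ is an edge assignment of type X with respect to $\C$, the type-swap bookkeeping recorded in the preceding Definition (and verified inside the proof of Corollary \ref{xycor2}) shows that $\eps$ is of type Y with respect to $\textup{r}(\textup{m}(\C))$. Therefore Corollary \ref{cord1}, applied with $\C$ replaced by $\textup{r}(\textup{m}(\C))$, yields an isomorphism of chain complexes
\[\left(\tc(\textup{r}(\textup{m}(\C))),\td_1(\textup{r}(\textup{m}(\C)),\eps)\right)\cong\textup{C}(\textup{r}(\textup{m}(\C)),\eps),\]
which is the first of the two claimed isomorphisms.

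For the second isomorphism $\textup{C}(\textup{r}(\textup{m}(\C)),\eps)\cong\textup{C}(\C,\eps)$ I would invoke the argument already carried out inside the proof of Corollary \ref{xycor2}: for an edge assignment $\eps$ of type X with respect to $\C$, the canonical identification of the bigraded abelian groups $\textup{C}(\C)$ and $\textup{C}(\textup{r}(\textup{m}(\C)))$ intertwines $\partial(\C,\eps)$ with $\partial(\textup{r}(\textup{m}(\C)),\eps)$, and is therefore an isomorphism of bigraded chain complexes $\textup{C}(\C,\eps)\cong\textup{C}(\textup{r}(\textup{m}(\C)),\eps)$. Composing this with the isomorphism from the previous paragraph gives the corollary.

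There is essentially no substantive obstacle here; the only point requiring care — and it has already been dealt with upstream — is that a type-X edge assignment on $\C$ genuinely becomes a type-Y edge assignment on $\textup{r}(\textup{m}(\C))$, so that both Corollary \ref{cord1} and the definition of $\hc'(\C,\eps)$ are applicable. Once that bookkeeping is in hand the statement is immediate.
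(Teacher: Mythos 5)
Your proposal is correct and follows exactly the paper's (implicit) argument: unwind the definition $\hc'(\C,\eps)=\hc(\textup{r}(\textup{m}(\C)),\eps)$, apply Corollary \ref{cord1} to $\textup{r}(\textup{m}(\C))$ using the fact that $\eps$ becomes type Y there, and append the canonical isomorphism $\textup{C}(\textup{r}(\textup{m}(\C)),\eps)\cong\textup{C}(\C,\eps)$ already established in the proof of Corollary \ref{xycor2}. No deviation from the paper's route.
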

Szab\'o gives a second version of his complex with differential $\td '$ \cite[beginning of Section 8]{szabo}. Obviously we have constructed a lift of this complex:
\begin{lem}
The complex $\hc '(\C,\eps)\otimes_\z\z_2$ is isomorphic to Szab\'o's complex with differential $\td '$.
\end{lem}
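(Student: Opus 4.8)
The statement to be proved is that $\hc'(\C,\eps) \otimes_\z \z_2$ is isomorphic to Szab\'o's complex with differential $\td'$. The plan is to chase the definitions on both sides and identify them term by term, using the two facts already assembled in the excerpt: first, that $\hc(\C,\eta) \otimes_\z \z_2$ is isomorphic to Szab\'o's complex (with his original differential $\td$), which was recorded as a lemma above; and second, that $\td'$ in Szab\'o's paper is obtained from $\td$ by the very same reversal-and-mirror operation on configurations that we used to define $\hc'$ from $\hc$. So the proof is essentially a bookkeeping verification that our definition $\hc'(\C,\eps) \defeq \hc(\textup{r}(\textup{m}(\C)),\eps)$ reduces mod $2$ to exactly Szab\'o's $\td'$-construction applied to the diagram underlying $\C$.

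First I would unwind the mod-$2$ reduction of $\hc'(\C,\eps)$. By definition $\hc'(\C,\eps) = \hc(\textup{r}(\textup{m}(\C)),\eps)$, and by the earlier lemma $\hc(\textup{r}(\textup{m}(\C)),\eps) \otimes_\z \z_2$ is Szab\'o's complex built on the unoriented configuration $\overline{\textup{r}(\textup{m}(\C))}$. Since reversing arc orientations does not change the underlying unoriented configuration, $\overline{\textup{r}(\textup{m}(\C))} = \overline{\textup{m}(\C)} = \textup{m}(\overline{\C})$, i.e. the mirror (orientation-reversal of $\s^2$) of the unoriented $0$-resolution underlying $\C$. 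So what we must show is that Szab\'o's $\td$-complex on $\textup{m}(\overline{\C})$ agrees, after identifying chain groups via the canonical basis, with Szab\'o's $\td'$-complex on $\overline{\C}$.

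Next I would invoke the description of $\td'$ from the beginning of Section 8 of \cite{szabo}: Szab\'o's second differential is obtained by replacing each configuration type by its mirror-dual partner, which at the level of the coefficient assignments amounts precisely to the substitution $\tau \mapsto \textup{m}(\tau^*)$ that we tabulated in the paragraph defining $\textup{m}(\tau^*)$ (with $\ta_n \leftrightarrow \tb_n$, $\tc_{p,q}\leftrightarrow\tde_{p,q}$, $\tf_{p,q}\leftrightarrow\tg_{q,p}$). The observation that drives the identification is that a configuration $\C$ is of type $\tau$ if and only if $\textup{m}(\C^*)$ is of type $\textup{m}(\tau^*)$; applying this with $\C$ replaced by $\textup{m}(\C)$ and using $(\textup{m}(\C))^* = \textup{r}(\textup{m}(\textup{m}(\C))) {}^{*-1}$-type identities from the configuration calculus (concretely $(\textup{m}(\C^*))^* = \textup{m}(\C)$ and $(\C^*)^* = \textup{r}(\C)$), one matches the face-by-face local pictures of the $\td$-complex on $\textup{m}(\overline{\C})$ with those of the $\td'$-complex on $\overline{\C}$. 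The signs are irrelevant since we are working mod $2$, so only the combinatorial type data and the grading shifts need to be compared, and the grading rule together with Corollary \ref{corgr} already guarantees the $(\textup{h},\delta)$-shifts agree.

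The main obstacle I anticipate is not conceptual but a matter of carefully aligning conventions: Szab\'o's normalization of $\td'$, his indexing of configuration types, and the precise sense in which his Section 8 construction is "the mirror" must be checked to coincide with our $\textup{r}\circ\textup{m}$ convention and with the splitting of his type $\te_{p,q}$ into our $\tf_{p,q}$ and $\tg_{p,q}$. In particular one should verify that the pair $(\tf_{p,q},\tg_{p,q})$ is interchanged correctly (note the index transposition $\textup{F}_{p,q}\leftrightarrow\textup{G}_{q,p}$) under both our mirror-dual operation and Szab\'o's passage to $\td'$, so that no type is left unmatched. Once these conventions are reconciled, the isomorphism is the canonical one on chain groups induced by the orientation-reversing diffeomorphism of $\s^2$, and the verification that it intertwines the differentials is immediate from the local definitions.
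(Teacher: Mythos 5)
The paper does not actually give a proof of this lemma — it introduces it with "Obviously we have constructed a lift of this complex" — so your task is to unwind what makes it "obvious." Your overall strategy is the right one: reduce mod $2$, invoke the earlier lemma that $\hc(\D,\etaup)\otimes_\z\z_2$ is Szab\'o's $\td$-complex with $\D=\textup{r}(\textup{m}(\C))$, note $\overline{\textup{r}(\textup{m}(\C))}=\textup{m}(\overline{\C})$, and then match face-by-face against Szab\'o's $\td'$-construction. Up to there you are tracking the paper's intent.

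The gap is in the type bookkeeping. You invoke the substitution $\tau\mapsto\textup{m}(\tau^*)$ (the table $\ta_n\leftrightarrow\tb_n$, $\tc_{p,q}\leftrightarrow\tde_{p,q}$, $\tf_{p,q}\leftrightarrow\tg_{q,p}$), but that table records the effect on types of the map $\C\mapsto\textup{m}(\C^*)$, not of $\C\mapsto\textup{r}(\textup{m}(\C))$, which is the map used to define $\hc'$. These are genuinely different operations on configurations, with genuinely different effects on types: $\textup{m}((\cdot)^*)$ is a self-inverse permutation of $\ttt$, whereas $\textup{r}(\textup{m}(\cdot))$ does \emph{not} preserve $\ttt$ — it swaps the two-dimensional types $X$ and $Y$, so it sends a type-$Y$ square (of type $\tc_{1,1}$ and $\tde_{1,1}$) to a type-$X$ square (of no type in $\ttt$), and vice versa. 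That $X/Y$ swap is precisely the mechanism by which $\hc$ and $\hc'$ differ mod $2$, and it is what must be matched against what Szab\'o actually changes when passing from $\td$ to $\td'$. The "configuration-calculus identity" you write, $(\textup{m}(\C))^*=\textup{r}(\textup{m}(\textup{m}(\C)))^{*-1}$, is not a valid relation and does not reconcile the two substitutions. So the core matching step — identifying, for each face $a$, the type data of $\tr(\textup{r}(\textup{m}(\C)),a)=\textup{r}(\textup{m}(\tr(\C,a)))$ with the type data driving Szab\'o's $\td'$ — has not actually been carried out; you need to work out the effect of $\textup{r}\circ\textup{m}$ on the types in $\ttt$ (including configurations that leave or enter $\ttt$), not reuse the $\textup{m}\circ{}^*$ table that underlies the duality rule.
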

So Proposition \ref{primelem} is proved.
\begin{defn}
Let $\te '(\C,\eps)$ be the spectral sequence we get from $\hc '(\C,\eps)$.
\end{defn}
As in the proof of Corollary \ref{xycor2} we conclude from the Reidemeister invariance:
\begin{cor}
Let $\eps$, $\etaup$ be edge assignments of type X,Y with respect to $\C$. Then we have $\te '(\C,\eps)\cong\te(\C,\etaup)$ and $\textup{H}_\delta\bigl(\hc '(\C,\eps)\bigr)\cong\textup{H}_\delta\bigl(\hc(\C,\etaup)\bigr)$.
\end{cor}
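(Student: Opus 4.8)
The plan is to mimic the proof of Corollary~\ref{xycor2}, now carried out at the level of spectral sequences and abutments rather than just of homology groups. The starting observation is that, since $\eps$ has type~X with respect to $\C$, it has type~Y with respect to $\textup{r}(\textup{m}(\C))$, so by definition $\hc'(\C,\eps)=\hc(\textup{r}(\textup{m}(\C)),\eps)$ and $\te'(\C,\eps)=\te(\textup{r}(\textup{m}(\C)),\eps)$, with $\eps$ a type-Y edge assignment for $\textup{r}(\textup{m}(\C))$. Thus it is enough to produce isomorphisms $\te(\textup{r}(\textup{m}(\C)),\eps)\cong\te(\C,\etaup)$ and $\textup{H}_\delta\bigl(\hc(\textup{r}(\textup{m}(\C)),\eps)\bigr)\cong\textup{H}_\delta\bigl(\hc(\C,\etaup)\bigr)$, where both edge assignments are of type~Y for their respective configurations. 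Since $\overline{\textup{r}(\textup{m}(\C))}$ is the mirror of $\overline{\C}$ and not $\overline{\C}$ itself, Theorem~\ref{thind} does not apply directly; as in Corollary~\ref{xycor2}, I would instead route the comparison through Reidemeister invariance of the diagram mirror obtained by reversing the direction of the projection.

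Concretely, I would pick a link diagram $\D_1$ whose $0$-resolution is $\overline{\C}$ and let $\D_2$ be the diagram obtained from $\D_1$ by reversing the orientation of $\s^2$ and exchanging over- and under-strand at every crossing; as explained in the proof of Corollary~\ref{xycor2}, $\D_1$ and $\D_2$ are diagrams of the same link, hence differ by finitely many Reidemeister moves, and $\overline{\textup{m}(\C)}=\overline{\textup{r}(\textup{m}(\C))}$ is the $0$-resolution of $\D_2$. Applying the Reidemeister invariance of $\te(\cdot)$ and $\hh(\cdot)$ to the pair $\D_1,\D_2$, and then using Theorem~\ref{thind} (independence of the isomorphism class of $\hc$, hence of $\te$ and of $\textup{H}_\delta$, under change of the oriented configuration and of the type-Y edge assignment within a fixed $0$-resolution) to transport the result onto the specific models, gives $\te(\D_1,\C,\etaup)\cong\te(\D_2,\textup{r}(\textup{m}(\C)),\eps)$ together with the matching isomorphism on $\textup{H}_\delta$. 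Since $\hc(\D_i,\cdot,\cdot)$ differs from $\hc(\cdot,\cdot)$ only by the uniform bigrading shift of Definition~\ref{defkd}, and this shift is already absorbed into the statement of Reidemeister invariance, unwinding it yields the two desired isomorphisms.

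The step I expect to be the main (if mild) obstacle is checking that the comparison genuinely takes place at the level of spectral sequences, not merely of homology. Theorem~\ref{thind} is phrased as an isomorphism of chain complexes; I would need to observe that the isomorphism produced in Section~\ref{sdep} respects the h-grading, which is intrinsic to $\textup{C}(\C)$, and is therefore filtered, so that it induces an isomorphism of the associated spectral sequences and of their $\delta$-graded abutments. Everything else is bookkeeping with the h- and $\delta$-shifts of Definition~\ref{defkd}, which cancel along the chain of isomorphisms exactly as in the displayed computation in the proof of Corollary~\ref{xycor2}.
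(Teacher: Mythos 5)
Your argument is exactly the route the paper intends: the paper's entire proof here is the one-line pointer ``As in the proof of Corollary~\ref{xycor2} we conclude from the Reidemeister invariance,'' and you have spelled out precisely that, using the defining equality $\hc'(\C,\eps)=\hc(\textup{r}(\textup{m}(\C)),\eps)$, the mirror diagram $\D_2$, the Reidemeister invariance of $\te(\cdot)$ and $\textup{H}_\delta(\hc(\cdot))$, and Theorem~\ref{thind} to transport onto the chosen models. One small wording slip in your closing paragraph: the isomorphism $\textup{id}+\tth_i(\C,\eps)$ from the proof of Theorem~\ref{thind} does \emph{not} preserve the h-grading (since $\tth_i$ raises it by one), but it preserves the filtration $\tf_j=\oplus_{h\ge j}\tc(\C)^h$, has the filtration-preserving inverse $\textup{id}-\tth_i(\C,\eps)$, and preserves the $\delta$-grading, which is what is actually needed to get an isomorphism of spectral sequences and of the $\delta$-graded abutments.
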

This proves Corollary \ref{primecor}. The rest of this article is dedicated to the proofs of Theorem \ref{thdd} and Theorem \ref{thind}.

\section{Dependence on the orientation}\label{sdep}
In this Section we analyze how our differential depends on the orientation of the arcs. This investigation is obviously necessary for the proof of Theorem~\ref{thind} but it also helps us to prove Theorem \ref{thdd}. First we give two relations that the differential obviously fulfills, the \emph{filtration rule} and the \emph{duality rule}. For this we have to fix some notation. In the following let $\C$ be an orientated configuration.
\begin{defn}
For $x_1,\dots,x_n$ mutually distinct circles of $\C$ we call $x_1\wedge\cdots\wedge x_n\in\Lambda\V(\C)$ a \emph{monomial of degree $n$ which is divisible by $x_1,\dots,x_n$}.
\end{defn}
\begin{defn}
Let $\alpha\in\Lambda\V(\C)$ be a monomial and let $\omega\in\Lambda\V(\C)$. Furthermore let $\beta_1,\dots,\beta_m\in\Lambda\V(\C)$ be monomials such that $\{\alpha,\beta_1,\dots,\beta_m\}$ is a basis of the free abelian group $\Lambda\V(\C)$. There are $\lambda,\mu_1,\dots,\mu_m\in\z$ such that $\omega=\lambda\alpha+\sum_{i=1}^m\mu_i\beta_i$. Then $\lambda$ is called the \emph{coefficient of $\omega$ at $\alpha$}. This does not depend on the choice of the $\beta_i$.
\end{defn}
\begin{defn}
Let $P$ be a point on a circle of $\C$ which does not lie on an arc. Then $x(P)$ and $y(P)$ denote the starting circle and the ending circle of $\C$ on which $P$ lies.
\end{defn}
Equipped with these definitions we can now state the filtration rule and the duality rule. Let $P$ be a point on a circle of $\C$ which does not lie on an arc. Let $\C$ be of type $\tit$ and $\eps$ be an edge assignment of type Y with respect to $\C$. Let $\alpha\in\Lambda\V(\C)$ and $\beta\in\Lambda\V(\C^*)$ be monomials. Directly from the definition of the differential we see:
\begin{lem}[filtration rule]
If $\alpha$ is divisible by $x(P)$ and the coefficient of $\td_{\C,\tau,\eps}(\alpha)$ at $\beta$ is non-trivial, then $\beta$ is divisible by $y(P)$.
\end{lem}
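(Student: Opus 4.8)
The filtration rule asserts that if a monomial $\alpha\in\Lambda\V(\C)$ is divisible by the starting circle $x(P)$ through a point $P$ lying on no arc, then $\td_{\C,\tau,\eps}(\alpha)$, expanded in the canonical basis, has only components divisible by the ending circle $y(P)$. The plan is to trace through the definition of $\td_{\C,\tau,\eps}$ case by case, using the observation that a point $P$ not on any arc lies on the same piece of the configuration before and after the surgeries (since the surgeries happen only near the arcs), so $x(P)$ and $y(P)$ are naturally identified and, crucially, if $x(P)$ is a passive circle of $\C$ then it is also a passive circle of $\C^*$, while if $x(P)$ is active then $y(P)$ is the ending circle containing $P$.

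First I would dispose of the trivial case: if $\alpha$ is not of the form $x_{\textup{act}(\C),\tau,\theta}\wedge\omega$ for some $\omega$ a wedge of passive circles, then $\td_{\C,\tau,\eps}(\alpha)=0$ and there is nothing to prove. So assume $\alpha=x_{\textup{act}(\C),\tau,\theta}\wedge\omega$, and $\td_{\C,\tau,\eps}(\alpha)=\eps(\theta)\tsp(\C,\theta)\,y_{\textup{act}(\C),\tau}\wedge\omega$. Now there are two subcases according to whether $x(P)$ is passive or active in $\C$. If $x(P)$ is passive, then divisibility of $\alpha$ by $x(P)$ means $x(P)$ is one of the circles in $\omega$; since the passive circles of $\C$ equal the passive circles of $\C^*$, the same circle $y(P)=x(P)$ appears in $\omega\subseteq\Lambda\V(\C^*)$, so the output is divisible by $y(P)$. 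If $x(P)$ is active, then $x(P)$ must be one of the circles $x_i$ appearing in $x_{\textup{act}(\C),\tau,\theta}$; here I would go type by type through table \ref{ttypes} and the explicit formulas for $y_{\C,\tau}$, checking that whenever an active starting circle $x_i$ carrying a point $P$ off the arcs is present, the corresponding ending circle $y(P)$ appears in $y_{\C,\tau}$. For types $\ta_n,\tc_{p,q}$ the monomial $x_{\C,\tau,\theta}=1$ has no divisors, so the hypothesis is vacuous. For $\tb_n,\tde_{p,q},\tf_{p,q},\tg_{p,q}$ one checks directly from the geometry that each starting circle listed in $x_{\C,\tau,\theta}$ becomes (the part of) the single ending circle $y$ that survives, so $y(P)=y$ and $y_{\C,\tau}$ is a (unit times) $y$, hence divisible by $y(P)$.

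The main obstacle, such as it is, is purely bookkeeping: one must verify in each type that the identification of $P$'s circle in $\C$ with $P$'s circle in $\C^*$ behaves as claimed, i.e., that a point off the arcs on an active circle $x_i$ of $\C$ ends up on the relevant ending circle named in $y_{\C,\tau}$, and that the passive-circle wedge $\omega$ is literally carried through unchanged. Since the surgery picture $\C\to\C^*$ is local near the arcs, a point disjoint from all arcs is unaffected, so $x(P)$ and $y(P)$ are the same subset of $\s^2$ up to the local modifications, and tracking which named circle it belongs to in each of the six types is a short finite check. Hence the filtration rule follows directly from the definitions, as claimed.
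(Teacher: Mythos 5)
Your proof is correct and takes essentially the same route the paper intends by its remark that the filtration rule follows ``directly from the definition'': you reduce to $\alpha=x_{\textup{act}(\C),\tau,\theta}\wedge\omega$, split into the passive and active cases for $x(P)$, and observe type by type that every active starting circle appearing in $x_{\textup{act}(\C),\tau,\theta}$ is carried to an ending circle dividing $y_{\textup{act}(\C),\tau}$, while the passive wedge $\omega$ passes through unchanged. Since the paper gives no written proof, your unpacking is simply a spelled-out version of that claim.
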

Let $\alpha^*\in\Lambda\V(\textup{m}(\C))$ be a monomial with $\textup{gr}(\alpha^*)=|\C|-\textup{gr}(\alpha)$ such that $\alpha^*$ is divisible by a circle $x$ in $\textup{m}(\C)$ if and only if $\alpha$ is not divisible by the counterpart of $x$ in $\C$. Then $\alpha^*$ is unique up to sign. Let $\beta^*\in\Lambda\V(\textup{m}(\C^*))$ be a monomial which is related to $\beta$ as $\alpha^*$ is related to $\alpha$. Let $\etaup$ be an edge assignment of type Y with respect to $\textup{m}(\C^*)$. Directly from the definition of the differential we see:
\begin{lem}[duality rule]
The coefficient of $\td_{\C,\tau,\eps}(\alpha)$ at $\beta$ equals up to sign the coefficient of $\td_{\textup{m}(\C^*),\textup{m}(\tau^*),\etaup}(\beta^*)$ at $\alpha^*$.
\end{lem}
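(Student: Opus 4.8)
The plan is to verify the duality rule type by type, since the differential $\td_{\C,\tau,\eps}$ is defined case by case in Table \ref{ttypes} and the pairing $\tau\mapsto\textup{m}(\tau^*)$ is an involution that interchanges $\ta_n\leftrightarrow\tb_n$, $\tc_{p,q}\leftrightarrow\tde_{p,q}$ and $\tf_{p,q}\leftrightarrow\tg_{q,p}$. First I would observe that passing from $\C$ to $\textup{m}(\C^*)$ reverses the orientation of $\s^2$ and rotates the arcs, so that a starting circle of $\C$ becomes (the counterpart of) an ending circle and vice versa; in particular $\V(\textup{m}(\C^*))$ is canonically identified with $\V(\C^*)$ with reversed ambient orientation, and $\V(\textup{m}(\C^*)^*)=\V(\textup{m}(\C))$ with $\V(\C)$. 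Under these identifications $\alpha^*$ is, up to sign, the ``complementary'' monomial to $\beta$ in $\Lambda\V(\C^*)$ and $\beta^*$ is complementary to $\alpha$ in $\Lambda\V(\C)$, so the statement to prove is: the non-trivial matrix entry of $\td_{\C,\tau,\eps}$ sending $x_{\C,\tau,\theta}$ to $y_{\C,\tau}$ corresponds, under complementation of monomials, to the non-trivial entry of $\td_{\textup{m}(\C^*),\textup{m}(\tau^*),\etaup}$.

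The key steps are then, for each $\tau$: (1) identify which canonical basis monomial $x_{\textup{m}(\C^*),\textup{m}(\tau^*),\theta'}$ is the complement of $y_{\C,\tau}$ and check it is the distinguished non-trivially-mapped basis element for type $\textup{m}(\tau^*)$; (2) identify the complement of $x_{\C,\tau,\theta}$ and check it equals $y_{\textup{m}(\C^*),\textup{m}(\tau^*)}$ up to sign; (3) note that the sign factors $\eps(\theta)\tsp(\C,\theta)$ and $\etaup(\theta')\tsp(\textup{m}(\C^*),\theta')$ need only agree up to an overall sign, since the lemma only claims equality up to sign. For instance, for $\tau=\ta_n$ we have $x_{\C,\ta_n,\theta}=1$ and $y_{\C,\ta_n}=(-1)^{n+1}$, while $\textup{m}(\tau^*)=\tb_n$ has $x_{\cdot,\tb_n,\theta'}=x_{\phi(\theta')(1)}\wedge\cdots\wedge x_{\phi(\theta')(n)}$ and $y_{\cdot,\tb_n}=y_1\wedge y_2$; the complement of $1$ in $\Lambda\V(\C)$ (which has rank $2$, circles $x_1,x_2$) is $x_1\wedge x_2$, matching $y_{\cdot,\tb_n}$, and the complement of $(-1)^{n+1}$ in $\Lambda\V(\C^*)$ (rank $n$) is an $n$-fold wedge of all circles, matching $x_{\cdot,\tb_n,\theta'}$ up to sign and reordering. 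The cases $\tc_{p,q}\leftrightarrow\tde_{p,q}$ and $\tf_{p,q}\leftrightarrow\tg_{q,p}$ are checked the same way, using the explicit descriptions of which circle of $\C^*$ is the $y$-circle and which circles of $\C$ carry the relevant arc endpoints.

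The main obstacle I expect is bookkeeping the signs and the correspondence between allowed paths of edges for $\C$ of type $\tau$ and allowed paths for $\textup{m}(\C^*)$ of type $\textup{m}(\tau^*)$ — especially for $\tde_{p,q}$, where only paths $\theta$ with $\{\phi(\theta)(n-1),\phi(\theta)(n)\}=\{a,b\}$ are allowed, and one must see that complementation of the corresponding monomials is consistent with the allowed-path condition for $\tc_{p,q}$ (where every path is allowed). However, since the lemma asserts equality of coefficients only \emph{up to sign}, and since we already know (from the independence-of-$\theta$ arguments preceding the lemma) that each $\td_{\C,\tau,\eps}$ is well-defined, it suffices to exhibit for a single convenient choice of $\theta$ a matching $\theta'$ and to track that the distinguished monomials correspond under complementation; the $\tsp$ and $\eps$ factors then contribute only an overall sign which is absorbed into the ``up to sign'' clause. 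Thus the proof reduces to the purely combinatorial verification, type by type, that complementation of canonical monomials in $\Lambda\V(\C)$ resp. $\Lambda\V(\C^*)$ sends the distinguished source/target pair for $\tau$ to the distinguished target/source pair for $\textup{m}(\tau^*)$, which follows directly from the definitions in Table \ref{ttypes} and the formulas for $x_{\C,\tau,\theta}$ and $y_{\C,\tau}$.
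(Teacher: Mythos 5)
Your proposal is correct and takes essentially the same approach as the paper, which simply asserts the lemma follows ``directly from the definition of the differential'' without writing out the type-by-type inspection; you flesh out that inspection, correctly reducing it to the observation that the coefficients in question are all $0$ or $\pm 1$ and that complementation of monomials carries the distinguished source/target pair for $\tau$ to the distinguished target/source pair for $\textup{m}(\tau^*)$ (together with the complementary passive factor $\omega$), so the ``up to sign'' clause absorbs all $\eps$- and $\tsp$-bookkeeping.
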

The filtration rule also holds for
\[\td_{\C,\eps}=\sum_{\tau\in\textup{T},\textup{ $\C$ is of type $\tau$}}\td_{\C,\tau,\eps}.\] The duality rule also holds for $\td_{\C,\eps}$ and $\td_{\textup{m}(\C^*),\etaup}$. To compare the differentials of two configurations that only differ in the orientation of the arcs we need to define "edge-homotopy" maps:
\begin{defn}
Let $\C$ be an active oriented one-dimensional configuration. We define a $\z$-module-homomorphism $\tth_\C:\Lambda\V(\C)\to\Lambda\V(\C^*)$:
\begin{enumerate}
\item[-]
If $\C$ is a join (see Figure \ref{1dim}) then $\tth_\C(1)=\tth_\C(x_1)=\tth_\C(x_2)=0$ and $\tth_\C(x_1\wedge x_2)=y$.
\item[-]
If $\C$ is a split (see Figure \ref{1dim}) then $\tth_\C(1)=1$ and $\tth_\C(y)=0$.
\end{enumerate}
If $\C$ is not active then we define $\tth_\C$ analogously to $\partial_\C$ (Definition \ref{nact}).
\end{defn}
The map $\tth_\C$ fulfills the filtration rule and the duality rule.
\begin{defn}
Let $\C$ be an oriented $n$-dimensional configuration and $\eps$ an edge assignment of type Y with respect to $\C$. For $i\in\{1,\dots,n\}$ let $\tth_i(\C,\eps):\tc(\C)\to\tc(\C)$ be defined as
\[\bigoplus_{a\in\f(n,1)\textup{ with }a_i=*}\eps(a)(-1)^{\textup{sp}(\C,a^0)}\tth_{\tr(\C,a)}.\]
\end{defn}
We have $\tth_i(\C,\eps)\circ\tth_i(\C,\eps)=0$ because
\[\textup{im}(\tth_i(\C,\eps))\subset\bigoplus_{a\in\f(n,0)\textup{ with }a_i=1}\Lambda\textup{V}(\tr(\C,a))\subset\textup{ker}(\tth_i(\C,\eps)).\]
Furthermore $\tth_i(\C,\eps)$ raises the h-grading by 1 and does not change the $\delta$-grading. Now we can state how the differential depends on the orientation of the arcs.
\begin{thm}\label{dhth}
Let $\C,\D$ be oriented $n$-dimensional configurations such that $\overline{\C}=\overline{\D}$ and the orientation of the arcs differs only at the $i$-th arc for some $i\in\{1,\dots,n\}$. Then there is an edge assignment $\eps$ of type Y with respect to $\C$ and an edge assignment $\etaup$ of type Y with respect to $\D$ so that
\[\td(\C,\eps)-\td(\D,\etaup)=\td(\C,\eps)\circ\tth_i(\C,\eps)-\tth_i(\C,\eps)\circ \td(\C,\eps).\]
\end{thm}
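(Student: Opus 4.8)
The plan is to choose compatible edge assignments, cancel everything on both sides except the contributions coming from faces of the hypercube that involve the $i$-th direction, and then verify what remains configuration by configuration. First I would fix the edge assignments. The type (A, K, X or Y) of a two-dimensional configuration depends only on its underlying \emph{unoriented} configuration — for the X/Y configurations this is the observation that the cyclic pattern of arc endpoints defining type $\tc_{1,1}$ is unchanged if one reads the circle in the opposite sense — so reversing a single arc does not change the type of any square. Hence an edge assignment $\eps$ of type Y with respect to $\C$ is automatically of type Y with respect to $\D$, and I would take $\etaup=\eps$; the argument below in any case only uses that $\eps$ and $\etaup$ agree on every edge $b$ with $\overline b\ne i$ and have the same type.

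Next, the reduction. If $a\in\f(n,k)$ has $a_i\ne *$ then the $i$-th arc is already resolved in $\tr(\C,a)$, so $\tr(\C,a)=\tr(\D,a)$, and moreover $\eps|_a=\etaup|_a$ and $\tsp(\C,a^0)=\tsp(\D,a^0)$; thus all such faces contribute identically to $\td(\C,\eps)$ and $\td(\D,\etaup)$ and cancel from the left-hand side. On the right-hand side, $\tth_i(\C,\eps)$ is built from edges $b$ with $\overline b=i$ and lands in the summands of $\tc(\C)$ indexed by vertices with $i$-th coordinate $1$; so in $\td(\C,\eps)\circ\tth_i(\C,\eps)$ only faces $a$ with $a_i=1$ can occur, and in $\tth_i(\C,\eps)\circ\td(\C,\eps)$ only faces $a$ with $a_i=0$. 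In either case the pair consisting of the direction-$i$ edge used by $\tth_i$ and the face used by $\td$ assembles into a single face $a'\in\f(n,k)$ with $a'_i=*$ and $k=\dim a'\ge 2$, with matching source and target. So it is enough to prove the identity summand by summand, indexed by such faces $a'$.

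Now fix one such $a'$, put $\C'\defeq\tr(\C,a')$ and $\D'\defeq\tr(\D,a')$ — two $k$-dimensional configurations differing only in the orientation of one arc, the $j_0$-th — and let $\eta\defeq\eps|_{a'}$, of type Y with respect to both. Unwinding the coefficients in $\td_k(\C,\eps)$ and $\tth_i(\C,\eps)$, the $a'$-summand becomes a local identity of the shape
\[\td_{\C',\eta}-\td_{\D',\eta}=\pm\bigl(\td_{\tr(\C',\,j_0\mapsto 1),\,\eta'}\circ\tth^{(1)}-\tth^{(0)}\circ\td_{\tr(\C',\,j_0\mapsto 0),\,\eta''}\bigr),\]
where $\tr(\C',j_0\mapsto 0)$ and $\tr(\C',j_0\mapsto 1)$ are the $(k-1)$-dimensional configurations obtained by $0$- or $1$-resolving the $j_0$-th arc, $\tth^{(0)}$ and $\tth^{(1)}$ are the one-dimensional edge-homotopy maps of that arc with the remaining arcs $0$- resp.\ $1$-resolved, $\eta',\eta''$ are the induced edge assignments, and the overall sign absorbs the factors $(-1)^{|(a')^0|+(k+1)\tsp(\C,(a')^0)}$ and $(-1)^{\tsp}$. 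I would prove this by induction on $k$: for $k=1$ both $\td_{\C',\eta}$ and $\td_{\D',\eta}$ equal $\eta(*)\,\partial_{\C'}$ by Lemma~\ref{lemd1} ($\partial_{\C'}$ depends only on whether $\C'$ is a join or a split), while the right-hand side is built from $0$-dimensional maps, so both sides vanish; for $k\ge 2$ one runs through the types $\tau\in\ttt$ of $\C'$. For each $\tau$ and each arc $j_0$ one reads off the type of $\D'$: often $\D'$ has no type at all (for instance reversing one arc of $\ta_n$ with $n\ge 3$ gives a configuration not in $\ttt$), so $\td_{\D',\eta}=0$; otherwise it is a neighbouring type ($\ta_2\leftrightarrow\tb_2$, $\tc_{1,1}$ stays $\tc_{1,1}$, or a reversal reshuffling the roles inside the $\tf$/$\tg$ family). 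Since $\C'$ — and $\D'$ when it has a type — has a basis element that is, up to sign, the unique one mapped non-trivially, it suffices to evaluate both sides on that element and its $\wedge\,\omega$-translates by passive circles and to compare scalars; the $\eta(\theta)$-signs are pinned down by exactly the transposition-by-transposition argument already used to prove $\td_{\C,\tau,\eps}$ well defined (this is where type Y is used), the $\tsp$-signs by the lemma counting splits along a path of edges, and the normalizations $(-1)^{n+1}$, $(-1)^n$, $(-1)^{p+1}$ in the $y_{\C,\tau}$ make the two sides agree.

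The real work is this last case analysis: one must keep four independent sign sources synchronized — the $\eta(\theta)$-cocycle, the $\tsp$-parities, the global $(-1)^{|(a')^0|+(k+1)\tsp}$ coefficients, and the $(-1)^{n+1}$-type normalizations in the $y_{\C,\tau}$ — while a single reversed arc can change which arcs count as ``interior'' of a circle or even eject the configuration from $\ttt$; the filtration rule and the duality rule (which $\tth$ also satisfies, and which trades $\ta_n\leftrightarrow\tb_n$, $\tc_{p,q}\leftrightarrow\tde_{p,q}$, $\tf_{p,q}\leftrightarrow\tg_{q,p}$) can be used to roughly halve the number of genuinely different cases. Everything else is formal: once the identity holds it says exactly that $1+\tth_i(\C,\eps)\colon\hc(\C,\eps)\to\hc(\D,\etaup)$ is an isomorphism of chain complexes with inverse $1-\tth_i(\C,\eps)$ — using $\tth_i(\C,\eps)\circ\tth_i(\C,\eps)=0$ and the fact that $\tth_i(\C,\eps)$ kills the image of $\td(\C,\eps)$ on the summands with $i$-th coordinate $1$, so that $\tth_i(\C,\eps)\circ\td(\C,\eps)\circ\tth_i(\C,\eps)=0$ — which is the input that Theorem~\ref{thind} will need.
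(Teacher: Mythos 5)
Your plan — cancel the faces that avoid direction $i$, reduce to a local identity at each face $a'$ with $a'_i=*$, and verify that identity type by type — is essentially the paper's plan, and the final remark about $1\pm\tth_i(\C,\eps)$ being mutually inverse chain maps is exactly how Theorem~\ref{thind} is then deduced. But the very first step of your argument is wrong, and the error propagates.

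You claim that the type (A, K, X, Y) of a two-dimensional configuration is invariant under reversing one arc, hence $\eps$ is automatically of type~Y with respect to $\D$ and you may take $\etaup=\eps$. This is false, and it is false in a way that the rest of the paper depends on: the four types are precisely the way the construction \emph{records} orientation dependence. Concretely, the composition $\partial_{\tr(\C,b)}\circ\partial_{\tr(\C,a)}$ picks up a sign when the split map's target circles $x_1,x_2$ are relabelled, and relabelling is exactly what reversing the arc does. The paper's Figure~\ref{f3} is a two-dimensional configuration of type~A whose one-arc reversal is the type~K configuration of Figure~\ref{f1}; Figure~\ref{f4} is a type~Y configuration whose one-arc reversal is of type~X. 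In either case $p(s,\eps)$ has to change sign to keep $\eps$ of type~Y, so $\eps$ is \emph{not} of type~Y with respect to $\D$. The paper therefore uses $\etaup=\kappa\eps$, where $\kappa(a)=-1$ exactly when $a_i=*$ and $\tr(\C,a)$ is a split, and proves the identity with this $\etaup$.

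Your auxiliary claim that ``$\partial_{\C'}$ depends only on whether $\C'$ is a join or a split'' is also wrong for splits: $\partial_{\C'}(1)=x_1-x_2$ is antisymmetric under swapping $x_1,x_2$, which is what reversing the arc does, so $\partial_{\tr(\D,a)}=-\partial_{\tr(\C,a)}$ when $\tr(\C,a)$ is a split. This matters because your decomposition silently drops the one-dimensional faces $a$ with $\overline{a}=i$ from the comparison: those faces contribute to the left-hand side but pair with no right-hand side term (the right-hand side summands you assemble are all of dimension $\ge 2$), so you still need $\td_1(\C,\eps)=\td_1(\D,\etaup)$ to hold in the $i$-th direction. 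With $\etaup=\eps$ it does not; with $\etaup=\kappa\eps$ it does, because the $\kappa$-twist exactly compensates the sign flip of $\partial$ on splits. Once $\etaup$ is fixed correctly, the local identity at each face $a'\in\f(n,k)$ with $a'_i=*$, $k\ge 2$, is the paper's equation~(\ref{dheq}), and the type-by-type case analysis you sketch (using the grading rule, the sign bookkeeping from $\eps(\theta)$, $\tsp(\C,\theta)$, and the normalizations of $y_{\C,\tau}$, and the duality rule to halve the work) is indeed how the paper finishes; but the cases where $\D'$ has a type different from $\C'$ — e.g.\ A versus K or Y versus X — are not incidental, they are the cases where the $\kappa$-twist of $\etaup$ is needed, so a proof built on $\etaup=\eps$ cannot be repaired locally.
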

\begin{proof}
It is enough to consider the case $i=1$. Let $\kappa:\f(n,1)\to\{1,-1\}$ be defined as
\[\kappa(a)=\begin{cases}
-1\qquad&\textup{if $a_1=*$ and $\tr(\C,a)$ is a split,}\\
1\qquad&\textup{otherwise.}
\end{cases}\]
Then $\etaup\defeq\kappa\eps$ is an edge assignment of type Y with respect to $\D$ \cite[Proof of Lemma 2.3]{ors}. Furthermore we have $\td_1(\C,\eps)=\td_1(\D,\etaup)$. Now let $n\ge 2$ and $a=(0,*,\dots,*)$, $b=(1,*,\dots,*)\in\f(n,n-1)$ and $c=(*,0,\dots,0)$, $d=(*,1,\dots,1)\in\f(n,1)$. We have to show:
\begin{equation}\label{dheq}
\td_{\C,\eps}-\td_{\D,\etaup}=(-1)^{1+n\textup{sp}(\C,b^0)}\eps(c)\td_{\tr(\C,b),\eps|_b}\circ H_{\tr(\C,c)}-(-1)^{\textup{sp}(\C,d^0)}\eps(d)\tth_{\tr(\C,d)}\circ \td_{\tr(\C,a),\eps|_a}.
\end{equation}
We only have to consider the case where $\C$ is active. 

First let $\C$ be disconnected. Then $\td_{\C,\eps}=0=\td_{\D,\etaup}$. We only have to consider the case where $\C$ has exactly two connected components such that in one connected component there lies the first arc and no other arc because otherwise $\td_{\tr(\C,b),\eps|_b}=0=\td_{\tr(\C,a),\eps|_a}$. Let $\theta$ be a path of edges that is allowed for $\textup{act}(\tr(\C,a))=\textup{act}(\tr(\C,b))$. We have to consider two cases:
\begin{enumerate}
\item[-]
Let $\textup{act}(\tr(\C,c))=\textup{act}(\tr(\C,d))$ be a join (see Figure \ref{1dim}). Then we have $\tsp(\C,b^0)=0$. Moreover $\eps(d)\eps|_a(\theta)=(-1)^{n-1}\eps(c)\eps|_b(\theta)$ because $\tr(\C,s)$ is of type K for all $s\in\f(n,2)$ with $s_1=*$. If $\tr(\C,a)$ is of type $\tit$ we have:
\begin{align*}
&\td_{\tr(\C,b),\tau,\eps|_b}\circ\tth_{\tr(\C,c)}(x_1\wedge x_2\wedge x_{\tr(\C,b),\tau,\theta})\\
=&\td_{\tr(\C,b),\tau,\eps|_b}(y\wedge x_{\tr(\C,b),\tau,\theta})\\
=&(-1)^{\textup{gr}(x_{\tr(\C,b),\tau,\theta})}\eps|_b(\theta)
\textup{sp}(\tr(\C,b),\theta)y_{\tr(\C,b),\tau}\wedge y
\end{align*}
and
\begin{align*}
&\tth_{\tr(\C,d)}\circ\td_{\tr(\C,a),\tau,\eps|_a}(x_1\wedge x_2\wedge x_{\tr(\C,b),\tau,\theta})\\
=&\eps|_a(\theta)\textup{sp}(\tr(\C,a),\theta)\tth_{\tr(\C,d)}(y_{\tr(\C,a),\tau}\wedge x_1\wedge x_2)\\
=&\eps|_a(\theta)\textup{sp}(\tr(\C,b),\theta)y\wedge y_{\tr(\C,b),\tau}\\
=&(-1)^{\textup{gr}(y_{\tr(\C,b),\tau})}\eps|_a(\theta)\textup{sp}(\tr(\C,b),\theta) y_{\tr(\C,b),\tau}\wedge y.
\end{align*}
Because of
\begin{align*}
\textup{gr}(y_{\tr(\C,b),\tau})-\textup{gr}(x_{\tr(\C,b),\tau,\theta})
&=\textup{sp}(\tr(\C,b),(1,\dots,1))-(n-1)+1\\
&=\textup{sp}(\C,d^0)-n+2
\end{align*}
the right hand side of equation ($\ref{dheq}$) is trivial.
\item[-]
Let $\textup{act}(\tr(\C,c))=\textup{act}(\tr(\C,d))$ be a split (see Figure \ref{1dim}). Then we have $\tsp(\C,b^0)=1$. Furthermore $\eps(d)\eps|_a(\theta)=(-1)^{n-1-\textup{sp}(\C,d^0)}\eps(c)\eps|_b(\theta)$ because of the following. Let $s\in\f(n,2)$ and $e\in\f(n,1)$ be equal despite of the first component. Let $s_1=*$ and $e_1=0$. Then $\tr(\C,s)$ is of type K if $\tr(\C,e)$ is a join and $\tr(\C,s)$ is of type A if $\tr(\C,e)$ is a split. So the right hand side of equation~(\ref{dheq}) is trivial.
\end{enumerate}

Now let $\C$ be connected.

First let $\C$ be as in Figure \ref{f3}.
\begin{figure}[htbp]
\centering
\includegraphics{gra.6}
\caption{}
\label{f3}
\end{figure}
Because $\C$ is of type~A and $\D$ is of type~K we have
\[\eps(0,*)\eps(*,1)=\eps(*,0)\eps(1,*)=\etaup(*,0)\etaup(1,*)=-\etaup(0,*)\etaup(*,1).\]
Moreover $\tsp(\C,(1,0))=\tsp(\C,(0,1))=0$. We have $\td_{\tc,\eps}(1)=-\eps(0,*)\eps(*,1)$ and $\td_{\D,\etaup}(x_1\wedge x_2)=\etaup(*,0)\etaup(1,*)y_2\wedge y_1$. Furthermore $\td_{\tr(\C,(1,*)),\eps|_{(1,*)}}\circ\tth_{\tr(\C,(*,0))}(x_1\wedge x_2)=\eps(1,*)y_2\wedge y_1$ and $\tth_{\tr(\C,(*,1))}\circ \td_{\tr(\C,(0,*)),\eps|_{(0,*)}}(1)=\eps(0,*)$. So equation (\ref{dheq}) holds. If $\C$ is the first configuration in Figure \ref{f1}, then the roles of $\C$ and $\D$ from Figure \ref{f3} are changed and so are the roles of $\eps$ and $\etaup$. So on both sides of equation (\ref{dheq}) we get the negative of the case of Figure \ref{f3}.

Now let $\C$ be as in Figure $\ref{f4}$.
\begin{figure}[htbp]
\centering
\includegraphics{gra.7}
\caption{}
\label{f4}
\end{figure}
Because $\C$ is of type Y we have $\eps(0,*)\eps(*,1)=\eps(*,0)\eps(1,*)$. Moreover $\textup{sp}(\C,(1,0))=\textup{sp}(\C,(0,1))=1$. We have $\td_{\C,\eps}(1)=-\eps(0,*)\eps(*,1)$ and $\td_{\C,\eps}(x)=-\eps(0,*)\eps(*,1)y$ and $\td_{\D,\etaup}=0$. Furthermore\\
$\td_{\tr(\C,(1,*)),\eps|_{(1,*)}}\circ\tth_{\tr(\C,(*,0))}(1)=\eps(1,*)$ and $\tth_{\tr(\C,(*,1))}\circ\td_{\tr(\C,(0,*)),\eps|_{(0,*)}}(x)=-\eps(0,*)y$. So equation~(\ref{dheq}) holds. If $\C$ is of type X, then the roles of $\C$ and $\D$ from Figure~\ref{f4} are changed and so are the roles of $\eps$ and $\etaup$. So on both sides of equation~(\ref{dheq}) we get the negative of the case of Figure \ref{f4}.

Our $\C$ is a connected, oriented at least 2-dimensional configuration. It follows that $\C$ is the first configuration in Figure \ref{f1} or the first configuration in Figure \ref{f2} or of type X or of type Y or
\[\textup{u}_\C\defeq\#\{\tau\in\textup{T}\ \ |\ \ \textup{$\C$ or $\D$ is of type $\tau$}\}\le 1.\]

Let $\textup{u}_\C=1$. It is enough to consider the case where there is $\tit$ such that $\C$ is of type $\tau$. We look at each type separately:
\begin{enumerate}
\item[$\tau=\ta_n$, $n\ge 3$:]
Then $\tr(\C,b)$ is of no type in $\ttt$. Moreover $\tr(\C,a)$ is of type $\ta_{n-1}$ and $\tr(\C,d)$ is a split. So equation (\ref{dheq}) holds.
\item[$\tau=\tb_n$, $n\ge 3$:]
Then $\tr(\C,a)$ is of no type in $\ttt$. Moreover $\tr(\C,b)$ is of type~$\tb_{n-1}$ and $\tr(\C,c)$ is a join. So we have $\tsp(\C,b^0)=0$. Choose a numbering of the arcs of $\C$ that begins with the first arc (from the given numbering of the arcs) and on the second position we have the arc whose end point lies on the circle on which the starting point of the first arc lies. This yields allowed paths of edges $\theta$ and $\zetaup$ for $\C$ and $\tr(\C,b)$ respectively. We have $\eps(\theta)=\eps(c)\eps|_b(\zetaup)$ and $\tsp(\C,\theta)=\tsp(\tr(\C,b),\zetaup)$. Furthermore $\tth_{\tr(\C,c)}(x_{\C,\textup{B}_n,\theta})=-x_{\tr(\C,b),\textup{B}_{n-1},\zetaup}$ and $y_{\tr(\C,b),\textup{B}_{n-1}}=y_{\C,\textup{B}_n}$. So equation (\ref{dheq}) holds.
\item[$\tau=\tc_{p,q}$, $p+q\ge 3$:]
The sphere $\s^2$ without the circle $x$ of $\C$ consists of two connected components. We have to consider two cases:
\begin{enumerate}
\item[-]
First consider the case where the first arc of $\C$ is the only arc in its connected component of $\s^2\setminus x$. Then $\tr(\C,a)$ is of no type in $\ttt$. Moreover $\tr(\C,b)$ is of type $\ta_{n-1}$ and $\tr(\C,c)$ is a split. So equation (\ref{dheq}) holds.
\item[-]
Now consider the case where in the connected component of $\s^2\setminus x$ in which the first arc lies there lies at least one other arc. Then $\tr(\C,b)$ is of no type in~$\ttt$ or $\tr(\C,b)$ is of type~$\tf_{1,1}$ or of type~$\tg_{1,1}$. Because $\tr(\C,c)$ is a split we have $\td_{\tr(\C,b),\eps|_b}\circ\tth_{\tr(\C,c)}=0$. Furthermore $\tr(\C,a)$ is of type $\tc_{p-1,q}$ or $\tc_{p,q-1}$ and $\tr(\C,d)$ is a split. So equation (\ref{dheq}) holds.
\end{enumerate}
\item[$\tau=\tde_{p,q}$, $p+q\ge 3$:]
Let $x$ be the circle of $C$ on which two starting points and two end points of arcs lies. We have to consider two cases:
\begin{enumerate}
\item[-]
First consider the case where the first arc of $\C$ is the only arc in its connected component of $\s^2$. Then $\tr(\C,b)$ is of no type in $\ttt$. Moreover $\tr(\C,a)$ is of type $\tb_{n-1}$ and $\tr(\C,d)$ is a join. An allowed path of edges $\theta$ for $\C$ such that $\phi(\theta)(n)=1$ gives an allowed path of edges $\zetaup$ for $\tr(\C,a)$. We have $\eps(\theta)=\eps|_a(\zetaup)\eps(d)$ and $\tsp(\C,\theta)=(-1)^{\tsp(\C,d^0)}\tsp(\tr(\C,a),\zetaup)$. Furthermore $x_{\C,\tde_{p,q},\theta}=x_{\tr(\C,a),\tb_{n-1},\zetaup}$ and $\tth_{\tr(\C,d)}(y_{\tr(\C,a),\textup{B}_{n-1}})=-y_{\C,\textup{D}_{p,q}}$. So equation (\ref{dheq}) holds.
\item[-]
Now consider the case where in the connected component of $\s^2\setminus x$ in which the first arc lies there lies at least one other arc. Then $\tr(\C,a)$ is of no type in $\ttt$ or $\tr(\C,a)$ is of type $\tf_{1,1}$ or of type $\tg_{1,1}$. Because $\tr(\C,d)$ is a join we have $\tth_{\tr(\C,d)}\circ\td_{\tr(\C,a),\eps|_a}=0$. Moreover $\tr(\C,b)$ is of type $\tde_{p-1,q}$ or $\tde_{p,q-1}$ and $\tr(\C,c)$ is a join. It follows $\tsp(\C,b^0)=0$. There are two possibilities:
\begin{enumerate}
\item[1.]
First consider the case where the end point of the first arc does not lie on $x$. Then let $\theta$ be an allowed path of edges for $\C$ so that $\phi(\theta)(1)=1$. Now $\theta$ provides an allowed path of edges $\zetaup$ for $\tr(\C,b)$. We have $\eps(\theta)=\eps(c)\eps|_b(\zetaup)$ and $\tsp(\C,\theta)=\tsp(\tr(\C,b),\zetaup)$. Furthermore $\tth_{\tr(\C,c)}(x_{\C,\textup{D}_{p,q},\theta})=-x_{\tr(\C,b),\textup{D}_{p-1,q},\zetaup}$ and $y_{\tr(\C,b),\textup{D}_{p-1,q}}=y_{\C,\textup{D}_{p,q}}$, where instead of $\tde_{p-1,q}$ we could also have $\tde_{p,q-1}$. So equation (\ref{dheq}) holds.
\item[2.]
Now let the end point of the first arc lie on $x$. Then let $\theta$ be an allowed path of edges for $\C$ such that $\phi(\theta)(n-1)=1$ and $\phi(\theta)(n-2)$ is the number of the arc whose end point lies on the circle on which the starting point of the first arc lies. Now $\theta$ yields an allowed path of edges $\zetaup$ for $\tr(\C,b)$. We have $\eps(\theta)=(-1)^{n-2}\eps(c)\eps|_b(\zetaup)$ because $\tr(\C,s)$ is of type K for all $s\in\f(n,2)$ with $|s^1|<n$. Moreover $\tsp(\C,\theta)=\tsp(\tr(\C,b),\zetaup)$ holds. We have $\tth_{\tr(\C,c)}(x_{\C,\textup{D}_{p,q},\theta})
=(-1)^{n-3}x_{\tr(\C,b),\textup{D}_{p-1,q},\zetaup}$ and $y_{\tr(\C,b),\textup{D}_{p-1,q}}=y_{\C,\textup{D}_{p,q}}$ where instead of $\tde_{p-1,q}$ we could also have $\tde_{p,q-1}$. So equation (\ref{dheq}) holds.
\end{enumerate}
\end{enumerate}
\item[$\tau=\tf_{0,2}$:]
See Figure \ref{ff02}.
\begin{figure}[htbp]
\centering
\includegraphics{gra.8}
\caption{}
\label{ff02}
\end{figure}
Because $\C$ is of type A we have $\eps(0,*)\eps(*,1)=\eps(*,0)\eps(1,*)$. Furthermore $\textup{sp}(\C,(1,0))=\textup{sp}(\C,(0,1))=1$ holds. We have $\td_{\C,\eps}(1)=-\eps(0,*)\eps(*,1)y_2$. Moreover $\td_{\tr(\C,(1,*)),\eps|_{(1,*)}}\circ\tth_{\tr(\C,(*,0))}(1)=\eps(1,*)(y_2-y_1)$ and $\tth_{\tr(\C,(*,1))}\circ\td_{\tr(\C,(0,*)),\eps|_{(0,*)}}(1)=-\eps(0,*)y_1$. So equation (\ref{dheq}) holds.
\item[$\tau=\tf_{2,0}$:]
See Figure \ref{ff20}.
\begin{figure}[htbp]
\centering
\includegraphics{gra.9}
\caption{}
\label{ff20}
\end{figure}
Because $\C$ is of type K we have $\eps(0,*)\eps(*,1)=-\eps(*,0)\eps(1,*)$. Furthermore $\textup{sp}(\C,(1,0))=\textup{sp}(\C,(0,1))=0$. We have $\td_{\C,\eps}(x_1\wedge x_3)=\eps(0,*)\eps(*,1)y$. Moreover $\td_{\tr(\C,(1,*)),\eps|_{(1,*)}}\circ\tth_{\tr(\C,(*,0))}(x_3\wedge x_2)=\eps(1,*)y$ and $\tth_{\tr(\C,(*,1))}\circ \td_{\tr(\C,(0,*)),\eps|_{(0,*)}}(x_1\wedge x_3)=-\eps(0,*)y=\tth_{\tr(\C,(*,1))}\circ\td_{\tr(\C,(0,*)),\eps|_{(0,*)}}(x_2\wedge x_3)$. So equation (\ref{dheq}) holds.
\item[$\tau=\tf_{1,1}$:]
We have to consider two cases:
\begin{enumerate}
\item[-]
Let the two boundary points of the first arc of $\C$ lie on the same circle, see Figure \ref{ff11a}.
\begin{figure}[htbp]
\centering
\includegraphics{gra.10}
\caption{}
\label{ff11a}
\end{figure}
Because $\C$ is of type K we have $\eps(0,*)\eps(*,1)=-\eps(*,0)\eps(1,*)$. Furthermore $\textup{sp}(\C,(1,0))=1$ and $\textup{sp}(\C,(0,1))=0$. We have $\td_{\C,\eps}(x_2)=\eps(0,*)\eps(*,1)y_1$. Moreover $\td_{\tr(\C,(1,*)),\eps|_{(1,*)}}\circ\tth_{\tr(\C,(*,0))}(1)=\eps(1,*)$ and $\td_{\tr(\C,(1,*)),\eps|_{(1,*)}}\circ\tth_{\tr(\C,(*,0))}(x_2)=\eps(1,*)y_1$ and $\tth_{\tr(\C,(*,1))}\circ\td_{\tr(\C,(0,*)),\eps|_{(0,*)}}(1)=\eps(0,*)$. So equation (\ref{dheq}) holds.
\item[-]
Let the two boundary points of the first arc of $\C$ lie on distinct circles, see Figure \ref{ff11b}.
\begin{figure}[htbp]
\centering
\includegraphics{gra.11}
\caption{}
\label{ff11b}
\end{figure}
We have $\textup{sp}(\C,(1,0))=0$ and $\textup{sp}(\C,(0,1))=1$ and $\td_{\C,\eps}(x_2)=\eps(*,0)\eps(1,*)y_1$. Furthermore $\td_{\tr(\C,(1,*)),\eps|_{(1,*)}}\circ\tth_{\tr(\C,(*,0))}(x_2\wedge x_1)=\eps(1,*)y_1\wedge y_2$ and $\tth_{\tr(\C,(*,1))}\circ \td_{\tr(\C,(0,*)),\eps|_{(0,*)}}(x_2)=-\eps(0,*)y_1$ and $\tth_{\tr(\C,(*,1))}\circ\td_{\tr(\C,(0,*)),\eps|_{(0,*)}}(x_1\wedge x_2)=\eps(0,*)y_1\wedge y_2$. So equation (\ref{dheq}) holds.
\end{enumerate}
\item[$\tau=\tf_{p,q}$, $p+q\ge 3$:]
We have to consider two cases:
\begin{enumerate}
\item[-]
First consider the case where the two boundary points of the first arc of $\C$ lie on the same circle. Then $\tr(\C,a)$ and $\tr(\C,b)$ are of type $\tf_{p,q-1}$. Moreover $\tr(\C,c)$ and $\tr(\C,d)$ are splits. This implies $\tth_{\tr(\C,d)}\circ \td_{\tr(\C,a),\eps|_a}=0$ and $\textup{sp}(\C,b^0)=1$. An allowed path of edges $\theta$ for $\C$ such that $\phi(\theta)(1)=1$ gives an allowed path of edges $\zetaup$ for $\tr(\C,b)$. We have $\eps(\theta)=\eps(c)\eps|_b(\zetaup)$ and $\textup{sp}(\C,\theta)=(-1)^{1+(n-2)}\textup{sp}(\tr(\C,b),\zetaup)$. Furthermore $\tth_{\tr(\C,c)}(x_{\C,\textup{F}_{p,q},\theta})
=x_{\tr(\C,b),\textup{F}_{p,q-1},\zetaup}$ and $y_{\tr(\C,b),\textup{F}_{p,q-1}}=y_{\C,\textup{F}_{p,q}}$. So equation (\ref{dheq}) holds.
\item[-]
Now consider the case where the two boundary points of the first arc of $\C$ lie on distinct circles. Then $\tr(\C,a)$ and $\tr(\C,b)$ are of type $\tf_{p-1,q}$. Moreover $\tr(\C,c)$ and $\tr(\C,d)$ are joins. This implies $\td_{\tr(\C,b),\eps|_b}\circ\tth_{\tr(\C,c)}=0$ and $\textup{sp}(\C,d^0)=q$. An allowed path of edges $\theta$ for $\C$ such that $\phi(\theta)(n)=1$ provides an allowed path of edges $\zetaup$ for $\tr(\C,a)$. We have $\eps(\theta)=\eps|_a(\zetaup)\eps(d)$ and $\textup{sp}(\C,\theta)=(-1)^q\textup{sp}(\tr(\C,a),\zetaup)$. Let $x$ be the circle of $\C$ on which the starting point of the first arc lies. Then we have $x_{\C,\textup{F}_{p,q},\theta}=x_{\tr(\C,a),\textup{F}_{p-1,q},\zetaup}\wedge x$ and $\tth_{\tr(\C,d)}(y_{\tr(\C,a),\textup{F}_{p-1,q}}\wedge x)=-y_{\C,\textup{F}_{p,q}}$. So equation (\ref{dheq}) holds.
\end{enumerate}
\item[$\tau=\tg_{p,q}$, $p+q\ge 2$:]
This case is treated analogously to the case $\tau=\tf_{p,q}$.
\end{enumerate}

Let now $\textup{u}_\C=0$. We have to show that then the right hand side of equation~(\ref{dheq}) is trivial. To do this we have to consider two cases:
\begin{enumerate}
\item[-]
Let us have a situation as in Figure \ref{fsit12} for the first arc of $\C$. Then we have $\textup{act}(\tr(\C,a))=\textup{act}(\tr(\C,b))$.
\begin{figure}[htbp]
\centering
\includegraphics{gra.12}
\caption{}
\label{fsit12}
\end{figure}
We will check that
\begin{equation}\label{esit12}
\begin{aligned}
&(-1)^{1+n\textup{sp}(\C,b^0)}\eps(c)\td_{\tr(\C,b),\tau,\eps|_b}\circ\tth_{\tr(\C,c)}\\
=&(-1)^{\textup{sp}(\C,d^0)}\eps(d)\tth_{\tr(\C,d)}\circ\td_{\tr(\C,a),\tau,\eps|_a}
\end{aligned}
\end{equation}
holds if $\tr(\C,a)$ is of type $\tit$. In Situation 1 we have $\tsp(\C,b^0)=0$. In Situation 2 we have $\tsp(\C,b^0)=1$. Let $\theta$ be an allowed path of edges for $\tr(\C,a)$. In Situation 1 we have $\eps(c)\eps|_b(\theta)=(-1)^{n-1}\eps(d)\eps|_a(\theta)$ because $\tr(\C,s)$ is of type K for all $s\in\f(n,2)$ with $s_1=*$. In Situation 2 we have $\eps(c)\eps|_b(\theta)=(-1)^{n-1-\textup{sp}(\C,d^0)}\eps(d)\eps|_a(\theta)$ because for all $s\in\f(n,2)$ with $s_1=*$ and $e\in\f(n,1)$ with
\[e_i=\begin{cases}
0\qquad&\textup{for }i=1,\\
s_i\qquad&\textup{otherwise},
\end{cases}\]
we have: If $\tr(\C,e)$ is a split, then $\tr(\C,s)$ is of type A. If $\tr(\C,e)$ is a join, then $\tr(\C,s)$ is of type K. Obviously in Situation 2 equation (\ref{esit12}) holds. In Situation 1 equation~(\ref{esit12}) holds because of the grading rule.
\item[-]
Let us have none of the two situations in Figure \ref{fsit12} for the first arc of $\C$. We will show that $\tth_{\tr(\C,d)}\circ\td_{\tr(\C,a),\tau,\eps|_a}=0$ holds if $\tr(\C,a)$ is of type $\tau$.

If $\tr(\C,b)$ is of type $\sigma$ then for an edge assignment $\iota$ of type Y with respect to $\textup{m}(\C^*)$ it follows from $\tth_{\tr(\textup{m}(\C^*),d)}\circ\td_{\tr(\textup{m}(\C^*),a),\textup{m}(\sigma^*),\iota|_a}=0$ and the duality rule that we have $\td_{\tr(\C,b),\sigma,\eps|_b}\circ\tth_{\tr(\C,c)}=0$.

The first arc of $\C$ is called $\gammaup$. Note: The circles of a configuration are divided into segments by the boundary points of the arcs. We will treat each possibility for $\tau$ separately:
\begin{enumerate}
\item[$\tau=\ta_n$:]
Then $\gammaup$ connects two distinct segments of one of the two circles of $\tr(\C,a)$. But then $\tr(\C,d)$ is a join.
\item[$\tau=\tb_n$:]
Then $\gammaup$ connects two distinct circles of $\tr(\C,a)$. But then $\tr(\C,d)$ is a split.
\item[$\tau=\tc_{p,q}$:]
Then $\gammaup$ connects two distinct segments of the circle of $\tr(\C,a)$ where for at least one of these segments we have: The two arcs that bound this segment lie on the other side of the circle than $\gammaup$. But then $\tr(\C,d)$ is a join.
\item[$\tau=\tde_{p,q}$:]
Then $\tr(\C,d)$ is a split.
\item[$\tau=\tf_{p,q}$:]
Then there are two possibilities:
\begin{enumerate}
\item[1.]
For at least one of the two segments that $\gammaup$ connects we have: The segment is bounded by the starting point and the end point of the same arc. Then $\tr(\C,d)$ is a join, but $y_{\tr(\C,a),\textup{F}_{p,q}}$ is not divisible by at least one of the two circles of $\textup{act}(\tr(\C,d))$.
\item[2.]
Otherwise $\tr(\C,d)$ is a split, but $y_{\tr(\C,a),\textup{F}_{p,q}}$ is divisible by the active circle of $\tr(\C,d)$.
\end{enumerate}
\item[$\tau=\tg_{p,q}$:]
This case is treated analogously to the case $\tau=\tf_{p,q}$.
\end{enumerate}
\end{enumerate}

So we have proved Theorem \ref{dhth}.
\end{proof}
\begin{proof}[Proof of Theorem \ref{thind}]
It is enough to consider the case where $\C$ and $\D$ only differ in the orientation of the $i$-th arc. Let $f,g:\tc(\C)\to\tc(\C)$ be defined as $f\defeq\textup{id}_{\tc(\C)}+\tth_i(\C,\eps)$ and $g\defeq\textup{id}_{\tc(\C)}-\tth_i(\C,\eps)$. Then we have $f\circ g=\textup{id}_{\tc(\C)}=g\circ f$. Because of Theorem \ref{dhth} there is an edge assignment $\iota$ of type~Y with respect to $\D$ such that
\[\td(\C,\eps)-\td(\D,\iota)=\td(\C,\eps)\circ\tth_i(\C,\eps)-\tth_i(\C,\eps)\circ \td(\C,\eps).\]
It follows that
\begin{align*}
&\td(\D,\iota)\circ f(x)\\
=&\bigl(\td(\C,\eps)-\td(\C,\eps)\circ\tth_i(\C,\eps)+\tth_i(\C,\eps)
\circ\td(\C,\eps)\bigr)\bigl(x+\tth_i(\C,\eps)(x)\bigr)\\
=&\td(\C,\eps)(x)+\tth_i(\C,\eps)\circ\td(\C,\eps)(x)-\td(\C,\eps)
\circ\underbrace{\tth_i(\C,\eps)\circ\tth_i(\C,\eps)}_{=0}(x)\\
&+\underbrace{\tth_i(\C,\eps)\circ\td(\C,\eps)\circ\tth_i(\C,\eps)}_{=0}(x)\\
=&f\circ\td(\C,\eps)(x).
\end{align*}
So $\hc(\C,\eps)$ and $\hc(\D,\iota)$ are isomorphic. An isomorphism between $\hc(\D,\iota)$ and $\hc(\D,\etaup)$ is given by Ozsv\'ath, Rasmussen and Szab\'o \cite[Proof of Lemma 2.2]{ors}.
\end{proof}

\section{$d\circ d=0$}\label{sdd}
In this Section we sketch the proof of Theorem \ref{thdd} \cite{master}. It is enough to proof the following.
\begin{thm}\label{thdd2}
Let $\C$ be an $n$-dimensional oriented configuration and $\eps$ an edge assignment of type Y respective $\C$. Then we have
\[\sum_{i=1}^{n-1}\td_{n-i}(\C,\eps)\circ\td_i(\C,\eps)=0.\]
\end{thm}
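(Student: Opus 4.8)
The plan is to prove Theorem \ref{thdd2} by a case analysis, set up so that Szab\'o's mod $2$ result handles the combinatorics and only the signs are left to check. First I would use the h-grading to localize the sum: since $\td_j(\C,\eps)$ sends the summand $\Lambda\V(\tr(\C,w))$ of $\tc(\C)$ into $\bigoplus_{|w'|=|w|+j}\Lambda\V(\tr(\C,w'))$ and $\C$ is $n$-dimensional, each composite $\td_{n-i}(\C,\eps)\circ\td_i(\C,\eps)$ vanishes on $\Lambda\V(\tr(\C,w))$ for $w\ne(0,\dots,0)$ and, on $\Lambda\V(\C)$, takes values in $\Lambda\V(\tr(\C,(1,\dots,1)))$. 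Hence $\sum_{i=1}^{n-1}\td_{n-i}(\C,\eps)\circ\td_i(\C,\eps)$ is a single map $\Lambda\V(\C)\to\Lambda\V(\tr(\C,(1,\dots,1)))$; unravelling the definition of $\td_j$, it equals
\[
\sum_{\emptyset\ne S\subsetneq\{1,\dots,n\}}(-1)^{|S|+(n-|S|+1)\tsp(\C,\chi_S)}\;
\td_{\tr(\C,a_{S^c}),\eps|_{a_{S^c}}}\circ\td_{\tr(\C,a_S),\eps|_{a_S}},
\]
where $\chi_S$ is the vertex carrying $1$ exactly on $S$ and $a_S$ (respectively $a_{S^c}$) is the face with $*$ exactly on $S$ (respectively on its complement), so that $\chi_S=a_S^1=a_{S^c}^0$; the inner factor enters with coefficient $1$ because $\tsp(\C,(0,\dots,0))=0$.

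Next I would cut down the bookkeeping. Modulo $2$ this sum vanishes, because $\hc(\C,\eps)\otimes_\z\z_2$ is Szab\'o's complex and Szab\'o's differential squares to zero; so every coefficient of the displayed map, in the canonical (up to sign) monomial bases, is an even integer, and since each summand contributes $0$ or $\pm1$ it is enough to match the nonzero summands in pairs of opposite sign. Passive circles are inert: by the $\wedge\,\omega$-extension of $\td_{\C,\tau,\eps}$, each composite on a monomial $x_{\textup{act}(\C),\tau,\theta}\wedge\omega$ (with $\omega$ a wedge of passive circles) equals the corresponding composite for $\textup{act}(\C)$, wedged with $\omega$ and with the same Koszul sign in every term, so I may assume $\C$ active. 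If an active $\C$ is disconnected, then for a summand to survive both $\textup{act}(\tr(\C,a_S))$ and $\textup{act}(\tr(\C,a_{S^c}))$ must be of a type, forcing $S$ and $S^c$ each to sit inside the arc set of one connected component; this is impossible with three or more components, and with exactly two it leaves precisely two contributing $S$, whose composites act on the two tensor factors of $\Lambda\V(\C)$ in opposite orders and cancel by a short direct computation. So I may assume $\C$ active and connected. Finally, the duality rule of Section \ref{sdep} identifies, up to sign, the $S$-summand for a triple $(\C,\tau,\sigma)$ with that for $(\textup{m}(\C^*),\textup{m}(\sigma^*),\textup{m}(\tau^*))$, so it suffices to treat one type from each mirror-dual pair, while the filtration rule further restricts which monomials can occur.

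Now comes the case analysis, which is the heart of the argument. A summand indexed by $S$ is nonzero only when $\textup{act}(\tr(\C,a_S))$ is of some type $\tau\in\ttt$, the source monomial is $x_{\textup{act}(\tr(\C,a_S)),\tau,\theta}\wedge\omega$ for an allowed path $\theta$, and the intermediate monomial $y_{\textup{act}(\tr(\C,a_S)),\tau}\wedge\omega$ is in turn the distinguished source monomial for the type $\sigma\in\ttt$ of $\textup{act}(\tr(\C,a_{S^c}))$. Beginning with $n=2$, where the statement is just $\partial(\C,\eps)\circ\partial(\C,\eps)=0$ for the odd Khovanov complex, one enumerates the finitely many families of achievable pairs $(\tau,\sigma)$ from the types table, tracking the ``allowed path'' restriction whenever a $\tde_{p,q}$ occurs; for each family I would evaluate both relevant composites on the distinguished monomial directly from the formulas for $\td_{\C',\tau',\eps'}$, choosing the paths $\theta$ conveniently (legitimate since each such map is independent of the allowed path) and using the grading rule to control exterior degrees. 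The sign of a composite then comes out as a product of the Koszul sign from re-bracketing the wedge, the split-signs $\tsp$, the prefactor $(-1)^{|S|+(n-|S|+1)\tsp(\C,\chi_S)}$, and the edge-assignment factor $\eps(\theta)$, the last of which is read off from the square relations --- equal to $+1$ across a square of type A or Y and $-1$ across one of type K or X, because $\eps$ is of type Y. One then exhibits the sign-reversing involution on the contributing $S$ (typically moving one arc between $S$ and its complement, in the spirit of the four-edge cancellation for ordinary Khovanov homology) and checks that matched composites have opposite signs. The main obstacle is exactly this: since the mod $2$ cancellations are already guaranteed by Szab\'o, the whole content is the verification that the Koszul signs, the split-signs, the prefactors, and the type-Y edge-assignment signs conspire to give exact cancellation rather than reinforcement, uniformly over the families $\tc_{p,q},\tde_{p,q},\tf_{p,q},\tg_{p,q}$ with general $p,q$ and in the presence of the $\tde_{p,q}$ allowed-path restriction --- this is precisely where the integral lift does work invisible mod $2$, and it is carried out in full detail in \cite{master}.
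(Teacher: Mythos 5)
Your route is genuinely different from the paper's. The paper proves Theorem \ref{thdd2} by induction on $n$, using the orientation-change machinery of Section \ref{sdep}: Lemma \ref{lemdd} (whose proof uses Theorem \ref{dhth} together with the inductive hypothesis in dimension $n-1$) shows that $\sum_i\td_{n-i}(\C,\eps)\circ\td_i(\C,\eps)$ is, after a compatible change of edge assignment, literally unchanged when the orientation of an arc of $\C$ is reversed; Theorem \ref{thdd3} then shows that for each fixed pair of monomials $\alpha,\beta$ there is \emph{some} orientation of the arcs of $\overline{\C}$ for which the coefficient of the sum applied to $\alpha$, at $\beta$, vanishes, and the two facts together kill every coefficient for every orientation. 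You instead keep the orientation fixed, localize the sum to a single map $\Lambda\V(\C)\to\Lambda\V(\tr(\C,(1,\dots,1)))$, and propose to kill each coefficient directly by a sign-reversing involution on the contributing faces $S$. The paper's device buys the freedom to choose, for each pair $(\alpha,\beta)$, an orientation in which the coefficient is manifestly zero (and it reuses the edge-homotopy maps $\tth_i$ already built for Theorem \ref{thind}); your approach is more direct, avoids the induction entirely, but must handle all orientations simultaneously, which makes the eventual case analysis heavier. Both routes defer the detailed connected-case bookkeeping to \cite{master}.

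Two soft spots in your write-up. First, the appeal to Szab\'o's mod $2$ vanishing is not actually load-bearing: knowing a coefficient is even does not show it is zero. The entire content of the argument is exhibiting the sign-reversing pairing; once you have it, evenness of the coefficient is a corollary rather than an ingredient, so this step should be dropped or demoted to motivation. Second, your assertion that each $S$-summand contributes $0$ or $\pm 1$ to a fixed coefficient is not free. It holds for $n\ge 3$, essentially because at least one of the two faces $a_S$, $a_{S^c}$ then has dimension at least $2$, so the corresponding $\td_{\tr(\C,\cdot),\eps|_\cdot}$ sends each monomial to at most one monomial and no two intermediate states can be merged into the same $\beta$; but if the inner face is one-dimensional, its image can be a two-term expression like $x_1-x_2$, and one must check the outer face does not send both pieces to $\beta$. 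This deserves an explicit sentence, since without it the "pair-up" strategy is not clearly well-posed.
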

To prove this we will use induction over $n$. For $n=2$ we get up to sign the odd Khovanov differential, so we have $\td_1(\C,\eps)\circ\td_1(\C,\eps)=-\partial(\C,\eps)\circ\partial(\C,\eps)=0$. The induction is completed by the following lemma and theorem:
\begin{lem}\label{lemdd}
Let $\C$, $\D$ be $n$-dimensional oriented configurations with $\overline{\C}=\overline{D}$. If Theorem \ref{thdd2} holds for all $(n-1)$-dimensional configurations, then for every edge assignment $\eps$ of type Y with respect to $\C$ there is an edge assignment $\etaup$ of type Y with respect to $\D$ such that
\[\sum_{i=1}^{n-1}\td_{n-i}(\C,\eps)\circ\td_i(\C,\eps)
=\sum_{i=1}^{n-1}\td_{n-i}(\D,\etaup)\circ\td_i(\D,\etaup).\]
\end{lem}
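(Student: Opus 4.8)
The plan is to reduce to the case where $\C$ and $\D$ differ in the orientation of a single arc, to write the differential of $\D$ as a small perturbation of that of $\C$ by means of Theorem~\ref{dhth}, and then to check that this perturbation leaves the quadratic term unchanged. First I would note that any two orientations of the $n$ arcs of $\overline{\C}$ are joined by a finite chain of single‑arc reversals, so by concatenating the resulting equalities it suffices to treat $\C,\D$ with $\overline{\C}=\overline{\D}$ that differ only at the $i$‑th arc. Fix a type~Y edge assignment $\eps$ with respect to $\C$ and abbreviate $D\defeq\td(\C,\eps)$, $H\defeq\tth_i(\C,\eps)$. By the proof of Theorem~\ref{dhth} (whose construction of the new edge assignment accepts the given $\eps$ as input) there is a type~Y edge assignment $\etaup$ with respect to $\D$ such that, with $D'\defeq\td(\D,\etaup)$,
\[
D'=D-D\circ H+H\circ D .
\]

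The key algebraic point is that $H\circ D^k\circ H=0$ for every $k\ge 0$. Indeed, with $\mathcal{S}\defeq\bigoplus_{a\in\f(n,0),\,a_i=1}\Lambda\V(\tr(\C,a))$ one has $\textup{im}(H)\subseteq\mathcal{S}\subseteq\textup{ker}(H)$ (as already used in the proof of Theorem~\ref{thind}), and $D(\mathcal{S})\subseteq\mathcal{S}$, since the components $\td_{\tr(\C,b),\eps|_b}$ of $D$ carry the summand of a vertex $v$ with $v_i=1$ into summands of vertices $w$ with $w_i=1$; iterating gives $\textup{im}(D^k\circ H)\subseteq\mathcal{S}\subseteq\textup{ker}(H)$. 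In particular $H\circ H=0$, $H\circ D\circ H=0$ and $H\circ D\circ D\circ H=0$. Expanding $D'\circ D'=(D-D\circ H+H\circ D)\circ(D-D\circ H+H\circ D)$ and deleting every summand containing a factor $H\circ H$ or $H\circ D\circ H$ leaves
\[
D'\circ D'=D\circ D-(D\circ D)\circ H+H\circ(D\circ D)-H\circ D\circ D\circ H=D\circ D-(D\circ D)\circ H+H\circ(D\circ D).
\]

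Now I would isolate the part raising the h‑grading by exactly $n$. Writing $\varphi^{[m]}$ for the summand of a $\z$‑module homomorphism $\varphi\colon\tc(\C)\to\tc(\C)$ raising the h‑grading by exactly $m$, one has $H=H^{[1]}$ and, for $m\le n$, $\bigl(\td(\C,\eps)\circ\td(\C,\eps)\bigr)^{[m]}=\sum_{i=1}^{m-1}\td_{m-i}(\C,\eps)\circ\td_i(\C,\eps)$ (and the same for $\D,\etaup$). Taking the h‑degree‑$n$ part of the identity above gives
\[
\sum_{i=1}^{n-1}\td_{n-i}(\D,\etaup)\circ\td_i(\D,\etaup)=\sum_{i=1}^{n-1}\td_{n-i}(\C,\eps)\circ\td_i(\C,\eps)-\bigl(D\circ D\bigr)^{[n-1]}\circ H+H\circ\bigl(D\circ D\bigr)^{[n-1]},
\]
so it remains to prove $\bigl(D\circ D\bigr)^{[n-1]}=0$. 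The matrix entry of $\bigl(D\circ D\bigr)^{[n-1]}$ between vertices $v,v'$ with $v_l\le v'_l$ for all $l$ and $|v'|-|v|=n-1$ only involves resolutions lying on the unique $(n-1)$‑dimensional face $a$ with $a^0=v$, $a^1=v'$. Comparing the signs $(-1)^{|b^0|+(k+1)\tsp(\C,b^0)}$ attached to $\td_k(\C,\eps)$ with those of $\td_k(\tr(\C,a),\eps|_a)$ — using that $|b^0|$ and $\tsp(\C,b^0)$ each split as the contribution of $v$ plus the contribution measured inside the face $a$, and that $\eps|_b$ is the restriction of $\eps|_a$ to the corresponding face of the $a$‑subcube — this entry equals, up to the common sign $(-1)^{(n-1)\tsp(\C,v)}$, the $(a^0\to a^1)$‑entry of $\sum_{i=1}^{n-2}\td_{n-1-i}(\tr(\C,a),\eps|_a)\circ\td_i(\tr(\C,a),\eps|_a)$. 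This is the quadratic term of the $(n-1)$‑dimensional configuration $\tr(\C,a)$, hence vanishes by the hypothesis, and so $\bigl(D\circ D\bigr)^{[n-1]}=0$; the two sums then agree.

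The main obstacle is the last sign comparison: one must check that restricting $\td_k(\C,\eps)$ to a proper sub‑hypercube reproduces $\td_k(\tr(\C,a),\eps|_a)$ up to a sign that stays constant along the two‑step composition, so that it pulls out and what survives is exactly the quadratic term of $\tr(\C,a)$; everything else is formal. It is worth pointing out that the identity $H\circ D\circ D\circ H=0$ is precisely what removes the term built from $\bigl(D\circ D\bigr)^{[n-2]}$, so that no instance of Theorem~\ref{thdd2} in dimension smaller than $n-1$ is ever invoked — which is why the hypothesis can be stated for the $(n-1)$‑dimensional faces alone.
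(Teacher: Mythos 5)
Your proof is correct and follows the same route as the paper: reduce to a single arc reversal, invoke Theorem~\ref{dhth} to write $\td(\D,\etaup)=\td(\C,\eps)-\td(\C,\eps)\circ\tth_i(\C,\eps)+\tth_i(\C,\eps)\circ\td(\C,\eps)$, expand, and kill the surviving cross terms using the $(n-1)$-dimensional hypothesis. You additionally make explicit two facts the paper leaves tacit — the vanishing $\tth_i\circ\td^k\circ\tth_i=0$ coming from $\textup{im}(\tth_i)\subseteq\bigoplus_{a_i=1}\Lambda\V(\tr(\C,a))\subseteq\ker(\tth_i)$ together with $\td$-stability of this summand, which disposes of the terms sandwiched by $\tth_i$, and the face-restriction sign comparison identifying the degree-$(n-1)$ part of $\td\circ\td$ with the $(n-1)$-dimensional quadratic expression up to the common factor $(-1)^{(n+1)\textup{sp}(\C,v)}$ — both of which are genuinely needed to justify the paper's displayed cancellation and its two underbraced $=0$ claims.
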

\begin{proof}
It is enough to consider the case where the orientation of the arcs only differs at the $j$-th arc for some $j\in\{1,\dots,n\}$. Because of Theorem \ref{dhth} there is an edge assignment $\etaup$ of type Y with respect to $\D$ such that
\[\td(\C,\eps)-\td(\D,\etaup)=\td(\C,\eps)\circ\tth_j(\C,\eps)-\tth_j(\C,\eps)
\circ\td(\C,\eps).\]
Then we have:
\begin{align*}
&\sum_{i=1}^{n-1}\td_{n-i}(\D,\etaup)\circ\td_i(\D,\etaup)\\
=&\sum_{i=1}^{n-1}\bigl(\td_{n-i}(\C,\eps)+\tth_j(\C,\eps)\circ\td_{n-i-1}(\C,\eps)
-\td_{n-i-1}(\C,\eps)\circ\tth_j(\C,\eps)\bigr)\\
&\circ\bigl(\td_i(\C,\eps)+\tth_j(\C,\eps)\circ\td_{i-1}(\C,\eps)-\td_{i-1}(\C,\eps)
\circ\tth_j(\C,\eps)\bigr)\\
=&\sum_{i=1}^{n-1}\td_{n-i}(\C,\eps)\circ\td_i(\C,\eps)
-\underbrace{\left(\sum_{i=1}^{n-1}\td_{n-i}(\C,\eps)
\circ\td_{i-1}(\C,\eps)\right)}_{=0}\circ\tth_j(\C,\eps)\\
&+\tth_j(\C,\eps)\circ\underbrace{\sum_{i=1}^{n-1}\td_{n-i-1}(\C,\eps)
\circ\td_{i}(\C,\eps)}_{=0}.\qedhere
\end{align*}
\end{proof}
\begin{thm}\label{thdd3}
Let $\D$ be an $n$-dimensional configuration and $\alpha\in\Lambda\V(\D)$, $\beta\in\Lambda\V(\D^*)$ monomials. Then there is an oriented configuration $\C$ such that $\overline{\C}=\D$ and such that for all edge assignments $\eps$ of type Y with respect to $\C$ the coefficient of
\[\sum_{i=1}^{n-1}\td_{n-i}(\C,\eps)\circ\td_i(\C,\eps)(\alpha)\]
at $\beta$ is trivial.
\end{thm}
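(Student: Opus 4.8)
The plan is to expand the composite along the main diagonal of the hypercube and reduce to a short list of model configurations. First note that the coefficient of $\sum_{i=1}^{n-1}\td_{n-i}(\C,\eps)\circ\td_i(\C,\eps)(\alpha)$ at $\beta$ only sees the component of $\alpha$ at the bottom vertex $(0,\dots,0)$ and the component of the output at the top vertex $(1,\dots,1)$. Writing $a_S\in\f(n,|S|)$ for the face with stars exactly at the positions in $S\subseteq\{1,\dots,n\}$ and zeros elsewhere, and $b_S\in\f(n,n-|S|)$ for the face with stars exactly at the positions outside $S$ and ones elsewhere, this coefficient equals
\[\sum_{\emptyset\ne S\subsetneq\{1,\dots,n\}}(-1)^{|S|+(n-|S|+1)\tsp(\C,a_S^1)}\, c_S,\]
where $c_S$ is the coefficient at $\beta$ of $\td_{\tr(\C,b_S),\eps|_{b_S}}\bigl(\td_{\tr(\C,a_S),\eps|_{a_S}}(\alpha)\bigr)$; the displayed face signs are read off from the definition of $\td_k(\C,\eps)$ together with $\tsp(\C,(0,\dots,0))=0$. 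Since $\td_{\tr(\C,a_S),\eps|_{a_S}}=\sum_\sigma\td_{\tr(\C,a_S),\sigma,\eps|_{a_S}}$, a summand $c_S$ can be non-zero only if $\tr(\C,a_S)$ carries some type $\sigma\in\ttt$ and $\tr(\C,b_S)$ carries some type $\sigma'\in\ttt$ (up to passive circles), and then the grading rule pins $\alpha$, the intermediate monomial, and $\beta$ to the distinguished monomials $x_{\cdot,\sigma,\theta}$, $y_{\cdot,\sigma}=x_{\cdot,\sigma',\theta'}$, $y_{\cdot,\sigma'}$; so only finitely many pairs $(\alpha,\beta)$ and subsets $S$ are in play at all.

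Next I would reduce to $\D$ connected and active. If $\D$ has a passive circle $z$, the filtration rule applied at a point of $z$ forces $c_S=0$ unless $\alpha$ and $\beta$ are both divisible by $z$ or both not, and in either case $z$ factors out of every term; iterating, and then splitting off connected components (choosing the orientation of $\C$ freely on the discarded circles and arcs), we are left with the connected active case, where moreover $\alpha$ and $\beta$ must be the distinguished monomials attached to the type(s) of $\C$ or every $c_S$ already vanishes.

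The core is then a case analysis over the combinatorial type of the connected active configuration $\D$, after fixing a convenient orientation $\C$. Choosing the orientation is legitimate: by Lemma \ref{lemdd}, granting Theorem \ref{thdd2} in dimension $n-1$, the expression $\sum_{i}\td_{n-i}(\C,\eps)\circ\td_i(\C,\eps)$ does not change (up to replacing $\eps$ by a type-Y edge assignment for the reoriented configuration) when the orientation of an arc is reversed. For each type in $\ttt$, and for a $\D$ with no type at all, I would list the subsets $S$ for which $\tr(\C,a_S)$ (up to passive circles) is one of the admissible one- or two-dimensional configurations of Figures \ref{f1}, \ref{f2}, \ref{fxy} or a lower-dimensional type, read off the internal sign $\eps|_{a_S}(\theta)\tsp(\tr(\C,a_S),\theta)$, do the same for $\tr(\C,b_S)$, and verify that the surviving contributions cancel in pairs. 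The pairs arise either by transposing two consecutive entries of a path of edges (as in the well-definedness arguments for $\td_{\C,\tau,\eps}$) or, via the duality rule, by passing from $(\C,\alpha,\beta,S)$ to $(\textup{m}(\C^*),\beta^*,\alpha^*,S^c)$ and using the identification of $\textup{m}(\tau^*)$; the duality rule roughly halves the number of cases.

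The main obstacle is exactly this sign bookkeeping, which is absent from Szab\'o's mod~2 proof: there it suffices to count non-zero terms modulo~$2$, whereas here each term carries the product $\eps(\theta)\tsp(\C,\theta)$ — with $\eps$ of type Y, hence constrained on every $2$-face by the $\textup{A}/\textup{K}/\textup{X}/\textup{Y}$ rule of Definition \ref{defxy} — times the normalization $(-1)^{|a^0|+(k+1)\tsp(\C,a^0)}$, and one must show that across the whole $n$-cube these signs conspire so that paired terms cancel integrally, not merely up to parity. Verifying this type by type is the computation carried out in the author's master thesis \cite{master}.
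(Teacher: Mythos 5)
Your proposal follows the same high-level architecture as the paper's proof: reduce to the active case, use the reorientation freedom from Lemma \ref{lemdd} to fix a convenient orientation, then run a case analysis over the combinatorial types in $\ttt$, deferring the sign bookkeeping to the master thesis. Your expansion of the composite along the main diagonal and the identification of the face sign $(-1)^{|S|+(n-|S|+1)\tsp(\C,a_S^1)}$ are correct, as is the observation that $c_S\ne 0$ forces both $\tr(\C,a_S)$ and $\tr(\C,b_S)$ to carry types in $\ttt$ up to passive circles, and the use of the filtration rule to strip passive circles.

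The genuine gap is in your treatment of the disconnected case, which is precisely the case the paper verifies in full. You write ``splitting off connected components (choosing the orientation of $\C$ freely on the discarded circles and arcs), we are left with the connected active case,'' as though the disconnected case reduces away. It does not. When $\D$ has two components each containing arcs, the only nonzero contributions come from exactly two complementary faces $S$ and $S^c$ (with $S$ the arc indices of one component), and one must show that these two terms cancel. This is a genuine identity, not a reduction: it requires comparing $\td_{\tr(\C,b),\eps|_b}\circ\td_{\tr(\C,c),\eps|_c}$ against $\td_{\tr(\C,d),\eps|_d}\circ\td_{\tr(\C,a),\eps|_a}$ and tracking (i) the relation $\eps|_b(\theta)\eps|_c(\zetaup)=(-1)^{k(n-k)-\tsp(\C,b^0)\tsp(\C,d^0)}\eps|_d(\zetaup)\eps|_a(\theta)$, (ii) the Koszul sign from commuting the two wedges past each other, and (iii) the grading rule to convert the Koszul exponent into $(\tsp(\C,b^0)-k+1)(\tsp(\C,d^0)-(n-k)+1)$ so that it meshes with the face signs. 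If you prefer to view this as a tensor-product argument, fine, but then you must prove that the lifted differential really is the Koszul-signed tensor differential, which is the same computation in disguise. Your duality-rule remark also cannot substitute: the duality rule only gives equality ``up to sign,'' which is exactly the content that needs to be pinned down integrally. So as written, the disconnected case is missing, and it is the one part of the proof the paper does not itself defer to the thesis.
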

Now Theorem \ref{thdd2} follows from Lemma \ref{lemdd} and Theorem \ref{thdd3} by induction over $n$.
\begin{proof}[Proof of Theorem \ref{thdd3}]
We only need to consider the case that $\D$ is active and $n\ge 3$.

First let $\D$ be disconnected. Then it is enough to consider the case where $\D$ consists of exactly two connected components and for $k\in\{1,\dots,n-1\}$ the first $k$ arcs of $\D$ lie in one connected component and the other $n-k$ arcs lie in the other connected component. Let $a,b\in\f(n,n-k)$ with
\[a_j=\begin{cases}
0\qquad&\textup{for }j\le k,\\
*\qquad&\textup{for }j>k,
\end{cases}\]
and
\[b_j=\begin{cases}
1\qquad&\textup{for }j\le k,\\
*\qquad&\textup{for }j>k.
\end{cases}\]
Let $c,d\in\f(n,k)$ with
\[c_j=\begin{cases}
*\qquad&\textup{for }j\le k,\\
0\qquad&\textup{for }j>k,
\end{cases}\]
and
\[d_j=\begin{cases}
*\qquad&\textup{for }j\le k,\\
1\qquad&\textup{for }j>k.
\end{cases}\]
We have to show that for all $\C$, $\eps$ we have:
\begin{equation}\label{ddeq}
(-1)^{k+(n-k+1)\textup{sp}(\C,b^0)}\td_{\tr(\C,b),\eps|_b}\circ\td_{\tr(\C,c),\eps|_c}
+(-1)^{k-n+(k+1)\textup{sp}(\C,d^0)}\td_{\tr(\C,d),\eps|_d}
\circ\td_{\tr(\C,a),\eps|_a}=0
\end{equation}
We have $\textup{act}(\tr(\C,a))=\textup{act}(\tr(\C,b))$ and $\textup{act}(\tr(\C,c))=\textup{act}(\tr(\C,d))$. Let $\theta$ and $\zetaup$ be allowed paths of edges for $\tr(\C,a)$ and $\tr(\C,c)$ respectively. Then we have
\[\eps|_b(\theta)\eps|_c(\zetaup)
=(-1)^{k(n-k)-\textup{sp}(\C,b^0)\textup{sp}(\C,d^0)}
\eps|_d(\zetaup)\eps|_a(\theta).\]
Let $\tr(\C,a)$, $\tr(\C,c)$ be of type $\tau,\sigma\in\textup{T}$. Then we have
\begin{align*}
&\td_{\tr(\C,b),\tau,\eps|_b}\circ\td_{\tr(\C,c),\sigma,\eps|_c}(x_{\tr(\C,c),
\sigma,\zetaup}\wedge x_{\tr(\C,b),\tau,\theta})\\
=&\eps|_c(\zetaup)\textup{sp}(\tr(\C,c),\zetaup)\td_{\tr(\C,b),\tau,\eps|_b}
(y_{\tr(\C,c),\sigma}\wedge x_{\tr(\C,b),\tau,\theta})\\
=&(-1)^{\textup{gr}(y_{\tr(\C,c),\sigma})\textup{gr}(x_{\tr(\C,b),\tau,\theta})}
\eps|_b(\theta)\eps|_c(\zetaup)\textup{sp}(\tr(\C,b),\theta)\textup{sp}(\tr(\C,c),
\zetaup)
y_{\tr(\C,b),\tau}\wedge y_{\tr(\C,c),\sigma}
\end{align*}
and
\begin{align*}
&\td_{\tr(\C,d),\sigma,\eps|_d}\circ\td_{\tr(\C,a),\tau,\eps|_a}(x_{\tr(\C,c),
\sigma,\zetaup}\wedge x_{\tr(\C,b),\tau,\theta})\\
=&(-1)^{\textup{gr}(x_{\tr(\C,c),\sigma,\zetaup})\textup{gr}(x_{\tr(\C,b),
\tau,\theta})}
\eps|_a(\theta)\textup{sp}(\tr(\C,a),\theta)\td_{\tr(\C,d),\sigma,\eps|_d}
(y_{\tr(\C,a),\tau}\wedge x_{\tr(\C,d),\sigma,\zetaup})\\
=&(-1)^{\textup{gr}(x_{\tr(\C,c),\sigma,\zetaup})(\textup{gr}(x_{\tr(\C,b),
\tau,\theta})
+\textup{gr}(y_{\tr(\C,b),\tau}))}
\eps|_d(\zetaup)\eps|_a(\theta)\textup{sp}(\tr(\C,d),\zetaup)
\textup{sp}(\tr(\C,a),\theta)
y_{\tr(\C,d),\sigma}\wedge y_{\tr(\C,a),\tau}\\
=&(-1)^{(\textup{gr}(x_{\tr(\C,c),\sigma,\zetaup})+\textup{gr}(y_{\tr(\C,c),\sigma}))
(\textup{gr}(x_{\tr(\C,b),\tau,\theta})+\textup{gr}(y_{\tr(\C,b),\tau}))
+k(n-k)-\textup{sp}(\C,b^0)\textup{sp}(\C,d^0)}\\
&\cdot\td_{\tr(\C,b),\tau,\eps|_b}
\circ\td_{\tr(\C,c),\sigma,\eps|_c}(x_{\tr(\C,c),\sigma,\zetaup}\wedge x_{\tr(\C,b),\tau,\theta}).
\end{align*}
Because of the grading rule we have:
\begin{align*}
&(\textup{gr}(x_{\tr(\C,c),\sigma,\zetaup})+\textup{gr}(y_{\tr(\C,c),\sigma}))
(\textup{gr}(x_{\tr(\C,b),\tau,\theta})+\textup{gr}(y_{\tr(\C,b),\tau}))\\
\equiv&(\textup{sp}(\C,b^0)-k+1)(\textup{sp}(\C,d^0)-(n-k)+1)\qquad(\textup{mod }2)
\end{align*}
Now equation (\ref{ddeq}) follows.

The proof in the case that $\D$ is connected uses similar methods \cite[Satz~76]{master}; it is a refined version of the corresponding proof in the mod 2 case by Szab\'o \cite[Theorem 6.3]{szabo}.
\end{proof}

\bibliographystyle{amsplain}

\bigskip

\end{document}